\documentclass{siamltex}

\date{\today}

\usepackage{amsfonts,amsmath,amssymb,psfrag,graphicx,bbm,mathrsfs,yhmath,color}
\usepackage{hyperref}
\usepackage[left=2cm,right=2cm,vmargin=2.5cm,footnotesep=0.5cm]{geometry}

\usepackage[normalem]{ulem}

\def\fin{\ifmmode{\Large$\diamond$}\else{\unskip\nobreak\hfil
    \penalty50\hskip1em\null\nobreak\hfil{\Large$\diamond$}
    \parfillskip=0pt\finalhyphendemerits=0\endgraf}\fi}

\DeclareMathOperator*{\esssup}{ess\,sup}
\def\be#1#2\ee{\begin{equation}\label{eq:#1}#2\end{equation}}
\def\req#1{{\rm(\ref{eq:#1})}}
\def\bdm  {\begin{displaymath}}
  \def\edm  {\end{displaymath}}
\def\bdmal{\begin{displaymath}\begin{aligned}}
    \def\edmal{\end{aligned}\end{displaymath}}

\mathchardef\Omega="010A
\mathchardef\PhiG="0108

\renewcommand{\L}{{\mathscr L}}
\newcommand{\N}{{\mathord{\mathbb N}}}
\newcommand{\R}{{\mathord{\mathbb R}}}

\newcommand{\Z}{{\mathbb{Z} }}

\newcommand{\E}{{\mathbb E}}
\newcommand{\Var}{{\mathbb V}\textnormal{ar}}
\newcommand{\Cov}{{\mathbb C}\textnormal{ov}}

\renewcommand{\L}{{\mathscr L}}

\newcommand{\norm}[1]{\|#1\|}

\newcommand{\rmd}{\,\mathrm{d}}
\newcommand{\ds}{\rmd s}
\newcommand{\dt}{\rmd t}
\newcommand{\dx}{\rmd x}

\newcommand{\dxi}{\rmd \xi}

\newcommand{\rmi}{\mathrm{i}}
\newcommand{\eps}{\varepsilon}

\newcommand{\omegahat}{{\widehat{\omega}}}
\newcommand{\fhat}{{\widehat{f}}}

\newcommand{\testfct}{v}
\newcommand{\Testfct}{V}
\newcommand{\testfctphi}{\phi}
\newcommand{\Ghat}{{\widehat \testfct}}
\newcommand{\Khat}{{\widehat K}}

\newcommand{\PP}{\ensuremath{\mathsf{P}}}
\newcommand{\LL}{\ensuremath{\mathsf{L}}}
\newcommand{\KK}{\ensuremath{\mathsf{K}}}

\newcommand{\GG}{\ensuremath{\mathsf{G}}}

\def\req#1{{\rm(\ref{eq:#1})}}

\makeatletter
\newcommand{\dupdots}{\mathinner{\mkern1mu\raise\p@
    \vbox{\kern7\p@\hbox{.}}\mkern2mu
    \raise4\p@\hbox{.}\mkern2mu\raise7\p@\hbox{.}\mkern1mu}}
\makeatother
\makeatletter
\newcount\c@MaxMatrixCols \c@MaxMatrixCols=10

\makeatother

 {\endMakeFramed}

\newcommand{\chiL}{{\mathrm{1}_\Lambda}}

\newcommand{\V}{{{\mathscr V}_{u_0}}}

\newcommand{\Y}{{\mathscr Y}}

\newcommand{\dr}{\rmd r}
\newcommand{\dy}{\rmd y}

\newcommand{\xx}{{\boldsymbol{x}}}
\newcommand{\dxx}{\rmd \xx}
\newcommand{\yy}{{\boldsymbol{y}}}
\newcommand{\dyy}{\rmd \yy}
\newcommand{\xifett}{{\boldsymbol{\xi}}}
\newcommand{\dxifett}{\rmd \xifett}

\renewcommand{\mid}{\,|\,}

\makeatletter
\renewcommand\@biblabel[1]{#1.}
\makeatother

\title{(Non-)Hyperuniformity of Second Order Statistics\\ 
of Point Processes\thanks{The research leading to this work has been 
    done within the Collaborative Research Center TRR 146; 
    corresponding funding by the DFG is gratefully acknowledged.}}
\author{Fabio Frommer\thanks{Institut f\"ur Mathematik, Johannes
    Gutenberg-Universit\"at Mainz, 55099 Mainz, Germany
    ({\tt fabiofrommer@uni-mainz.de}).} \and
    Martin Hanke\thanks{Institut f\"ur Mathematik, Johannes
    Gutenberg-Universit\"at Mainz, 55099 Mainz, Germany
    ({\tt hanke@math.uni-mainz.de}).} }

\begin{document}

\sloppy
\maketitle

\begin{abstract}
We investigate statistical properties of certain stationary point
processes, namely determinantal processes with projection kernels
and Gibbs point processes with superstable pair interactions. These 
are examples of hyperuniform and non-hyperuniform stationary 
point processes, respectively. 
We are interested in the variance of their second order statistics
within a ball around the origin, and we study the asymptotic growth of 
this variance as the radius of the ball goes to infinity.
It is shown that, generically, for both types of processes the variance 
is asymptotically proportional to the volume of the ball.
In other words: the second order statistics of these point
processes behave non-hyperuniform.
For Gibbs processes with superstable interactions these results have
an interesting application to the so-called
inverse Henderson problem of statistical mechanics.

We also show that the structure factor (respectively the 
Bartlett spectral measure) of these Gibbs processes is strictly positive,
while it is positive except for a simple zero at the origin
for the determinantal processes.
\end{abstract}

\begin{AMS}
  {\sc 82B21, 82B80, 60G55}
\end{AMS}


\pagestyle{myheadings}
\thispagestyle{plain}

\addtocounter{footnote}{1}

\section{Introduction}
Stationary, i.e., translation invariant point processes 
whose density fluctuations fail to be extensive quantities, 
are called \emph{hyperuniform}. 
Loosely speaking, hyperuniform systems exhibit an increasing amount of
``order'' when looking at increasing scales: 
although the variance of the number of points in a bounded set may be positive, it does not grow as fast as the volume of that set.
In the physics community this phenomenon is described as \emph{global order and local disorder}.
In contrast, a realization of a Poisson point process 
-- which is non-hyperuniform -- will exhibit point clusters and empty regions of any size. 

Hyperuniformity was popularized by Torquato and Stillinger in \cite{Torquato03} under this name;
other works, e.g., by Ghosh and Lebowitz~\cite{Ghosh17} have used the term \emph{superhomogeneity} instead.
In statistical physics, hyperuniform fluids are called \emph{incompressible}.
Not surprisingly, this concept has many applications in different fields of material science, chemistry, physics, and biology;
compare the review of Torquato~\cite{Torquato18}.
In the mathematics community hyperuniform point processes arise in the 
context of Coulomb systems~\cite{Kunz74,Lebowitz83,Martin80},  
zeros of Gaussian analytic functions~\cite{Forrester99}, 
eigenvalues of random matrices \cite{Forrester10}, 
quasi-crystals~\cite{Baake19,Bjoerklund24,Torquato17a}, 
perturbed lattices~\cite{Dereudre24}, 
and certain determinantal point processes \cite{Ghosh21}.

For a stationary point process hyperuniformity can be inferred from the 
structure factor of the system,
i.e., from correlations of the system and its shifts:
For a hyperuniform system the structure factor vanishes at the origin. One can 
attempt to quantify the \emph{local disorder} of the system somewhat further
by looking, for example, at the number of pairs of points within a given 
distance. 
This cannot be easily resolved by the structure factor; 
instead one can resort to higher order statistics.
In this paper we therefore study \emph{second order statistics}, and 
we investigate whether these are extensive quantities. It is easy to see that 
this is the case for the Poisson point process, whereas, for example,
the stationary lattice exhibits hyperuniformity also for second order 
statistics. (Some technical difficulties arise due to the lack of Fourier 
smoothness of certain indicator functions; compare Remark~\ref{Rem:Lotz26} below.)

We focus on two classes of point processes:
\begin{itemize}
    \item[(i)] determinantal point processes with a projection kernel, 
    which are known to be hyperuniform, cf.~Ghosh and Krishnapur~\cite{Ghosh21};
    \item[(ii)] Gibbs point processes with superstable pair interactions,
    which are known to be non-hyperuniform, cf.~Ruelle~\cite{Ruelle70}.
\end{itemize}
It turns out that for both of these systems the structure factor is positive
(except for a simple zero at the origin for the determinantal processes). 
Moreover, in both cases the variance of non-trivial second-order statistics 
is also an extensive quantity generically. 
Note that this insight can be combined with recent results
by Hirsch, Otto, and Svane~\cite{Svane25} to obtain central limit theorems for second order statistics
of Gibbs point processes.

As another application of our results we consider the so-called 
inverse Henderson problem~\cite{FHJ19,FrHa22}, which is concerned with 
the identification of the
pair interaction of a stationary Gibbs point process from measurements of 
the radial distribution function. In this application second order statistics
correspond to the values of quadratic Taylor approximations of
a maximum-entropy type functional, and non-hyperuniformity of these statistics 
implies that the corresponding approximation -- like the entropy functional itself --
is strictly convex. 

The outline of this paper is as follows. In Section~\ref{Sec:pointprocesses}
we review fundamental properties of the two aforementioned model processes 
(i.e., determinantal and Gibbs point processes) 
in as much as they are needed for our results. 
Section~\ref{Sec:Nonhyperuniformity} provides a rigorous definition of
hyperuniformity and the structure factor (or the Bartlett spectral measure,
respectively), and investigates the latter for our two model systems.
Second order statistics of these systems are subsequently treated in 
Section~\ref{Sec:2ndorder}, and the application to the inverse Henderson
problem is the subject of Section~\ref{Sec:Jacobian}. 
Finally, in Section~\ref{Sec:Auxiliaries} we collect some technical
results, whose derivations would have disturbed the basic flow of argument 
in the main body of the paper.
\color{black}

\section{The point processes under consideration}
\label{Sec:pointprocesses}
For a measurable subset $\triangle\subset\R^d$ denote by $|\triangle|$ its 
Lebesgue measure, and for a configuration $\gamma\subset\R^d$ let 
$\#\gamma\in\N_0\cup\{+\infty\}$ be the number of its elements. 
We consider the configuration space
\[
   \Gamma \,=\, \bigl\{\,\gamma\subset\R^d\,:\,\#(\gamma\cap\triangle)<\infty \
   \text{for every bounded $\triangle\subset\R^d$}\,\bigr\}
\]
and its subset
\[
   \Gamma_0 \,=\, \bigl\{\,\gamma\in\Gamma\,:\, \#\gamma<\infty\,\bigr\}
\]
of finite configurations, and
define the $\sigma$-algebra
\[
   \mathcal{F}
   \,=\, \sigma\bigl(\,
            \{\gamma\in\Gamma\,:\, \#(\gamma\cap\triangle) = m\} \,:\, 
            \Delta \subset \R^d \text{ bounded}, \, m \in \N_0\,\bigr)\,.
\]
Any probability measure $\PP$ on $(\Gamma,\mathcal{F})$ is called a 
\emph{point process}. 
We write $\E\bigl[\,\cdot\,\bigr]$, $\Var\bigl[\,\cdot\,\bigr]$,
and $\Cov\bigl[\,\cdot\,,\,\cdot\,\bigr]$ for the expectation, variance,
and covariance of random variables under $\PP$, respectively.
If there exist nonnegative functions $\rho^{(n)}:(\R^d)^n\to\R^+_0$, 
$n\in\N$, such that
\be{useofrho}
   \E\Bigl[\sum_{x_1\neq \dots \neq x_n\in\gamma}G(\xx_n)\Bigr] 
   \,=\, \int_{(\R^d)^n} G(\xx_n)\rho^{(n)}(\xx_n) \dxx_n
\ee
for every nonnegative function $G:(\R^d)^n\to[0,\infty]$ and every $n\in\N$ then these are called \emph{correlation functions} of $\PP$. 
If there exists a $q>0$ such that 
\be{R}
   \rho^{(n)}(\xx_n) \,\leq\, q^n \qquad
   \text{for all $\xx_n\in(\R^d)^n$ and all $n\in\N$}\,,
\tag{R}
\ee
then it is said that the correlation functions satisfy a \emph{Ruelle bound}; 
this condition determines $\PP$ uniquely, cf., e.g., Kuna~\cite{KunaPhD}. 

A point process is called \emph{translation invariant} or \emph{stationary}, 
if $\PP$ is invariant under $\tau_y$ for every
$y\in \R^d$ where $\tau_y \colon x\mapsto x+y$. 
If such a point process admits correlation functions then these inherit 
the invariance under $\tau_y$, when all variables undergo the same translation. 
It follows that the first correlation function $\rho^{(1)}$ of a 
stationary point process $\PP$ is constant, this constant being 
the \emph{density} or \emph{intensity} $\rho$ of $\PP$. 
Further, $\rho^{(2)}$ only depends on the difference $x-y$ of its two 
arguments $x,y\in\R^d$; we therefore utilize the even function
\[
   \rho_2(x) \,=\, \rho^{(2)}(x,0)\,, \qquad x\in\R^d\,,
\]
rather than $\rho^{(2)}$ in this case, and we also make use of the so-called 
\emph{truncated pair correlation function}
\be{Ursell}
   \omega(x) \,=\, \rho_2(x) \,-\, \rho^2\,, \qquad x\in\R^d\,.
\ee

The best known example for a stationary point process with correlation functions is 
the \emph{Poisson point process} with rate $\rho$; 
its (constant) correlation functions are given by
\be{rhopoiss}
    \rho^{(n)}(\xx_n) \,=\, \rho^n\,, \qquad n\in\N\,,
\ee
so they satisfy the Ruelle bound with $q=\rho$. 
In particular, the density of the Poisson point process is given by its rate, 
and the truncated pair correlation function is vanishing identically, 
i.e., $\omega = 0$.

Another example of a stationary point process is the 
\emph{stationary lattice} $\mathsf{L} = \Z^d +Y$, 
where the (constant) shift $Y$ is uniformly 
distributed in $[0,1)^d$. $\LL$ does not admit correlation functions 
$\rho^{(n)}$ with $n\geq 2$, but its density exists, i.e., 
$\rho^{(1)}=1$.

\subsection{Determinantal point processes with projection kernels}\label{ss:DPP}
Throughout this paper we (formally) denote by
\[
   \fhat(\xi) \,=\, \int_{\R^d}e^{-2\pi\rmi x\cdot\xi} f(x)\dx\,, \qquad
   \xi\in\R^d\,,
\]
the Fourier transform of a function $f\in L^2(\R^d)$. Given a symmetric
set $E\subset\R^d$ with Lebesgue measure $|E|=1$, let
\[
   K(x) \,=\, \int_E e^{2\pi\rmi x\cdot\xi}\dxi
\]
be the inverse Fourier transform of the characteristic function $1_E$.
Since $E$ is symmetric and has finite measure, 
$K$ is a real-valued even continuous function, and the
convolution integral operator 
\be{convolution}
   (\mathscr{K} \testfct)(x) = \int_{\R^d}K(x-y)\testfct(y) \dy
\ee
is the orthogonal projection onto all $L^2$-functions whose Fourier transforms
are supported in $E$. Moreover, since the Fourier transform is a unitary 
operator, there holds
\be{K-L2}
    \norm{K}_{L^2} \,=\, \norm{\widehat{K}}_{L^2}
    \,=\, \norm{1_E}_{L^2} \,=\, |E| \,=\, 1 \,=\, K(0)\,.
\ee

Associated with the operator $\mathscr{K}$ is a unique stationary
point process $\KK$, defined in terms of its correlation functions
\begin{align}\label{eq:rhodpp}
    \rho^{(n)}(\xx_n) = \det \bigl[K(x_i-x_j)\bigr]_{i,j=1}^n\,, \qquad
    \xx_n=(x_1,\dots,x_n)\in(\R^d)^n\,,
\end{align}
which satisfy the Ruelle bound with $q=1$;
see Soshnikov~\cite{Soshnikov} for further details. 
This process $\KK$ 
belongs to the family of \emph{determinantal point processes}; its density
is given by $\rho=K(0)=1$.

\subsection{Gibbs point processes with superstable pair interactions}
Let $u:\R^d\to\R\cup\{+\infty\}$ with $u(0)=+\infty$ 
be an even interaction pair potential, 
for which there exist $r_0>0$ and decreasing positive functions 
$\varphi:(0,r_0)\to\R^+$ and $\psi:[0,\infty)\to\R^+$ with
\[
   \int_0^{r_0} r^{d-1}\varphi(r)\dr \,=\, +\infty\,, \qquad
   \int_{r_0}^\infty r^{d-1}\psi(r)\dr \,<\, \infty\,,
\]
such that
\be{superstable}
\begin{aligned}
   u(x) &\,\geq\, \varphi(|x|)\,, \qquad 0 <|x|<r_0\,, \\[1ex]
   |u(x)| &\,\leq\, \psi(|x|)\,, \qquad |x|\geq r_0\,.
\end{aligned}
\ee
Then the Mayer $f$-function
\be{f}
   f(x) \,=\, e^{-\beta u(x)} - 1\,, \qquad x\in\R^d\,,
\ee
belongs to $L^1(\R^d)\cap L^\infty(\R^d)$ for every value $\beta>0$ of the
so-called \emph{inverse temperature}. 

For $\gamma\in\Gamma_0$ consider the structural Hamiltonian
\[
   H(\gamma) \,=\, \frac{1}{2} \sum_{x\neq y \in\gamma} u(x-y)\,, 
\]
and -- for $\gamma\in\Gamma_0$ and $\eta\in\Gamma$ -- let
\[
   W(\gamma\mid\eta) \,=\, 
   \begin{cases} 
      {\displaystyle \sum_{x\in\gamma,y\in\eta} u(x-y)}\,,\quad & 
      \text{if ${\displaystyle \sum_{x\in\gamma,y\in\eta}|u(x-y)|<\infty}$}\,, \\[2ex]
      \phantom{xxx}+\infty\,, & \text{else}\,,
   \end{cases}
\]
be the associated interaction. By some abuse of notation we will also write
$H(\xx_n)$ and $W(\xx_n\mid\eta)$ instead of $H(\gamma)$ and $W(\gamma\mid\eta)$
with $\xx_n=(x_1,\dots,x_n)$, if $\gamma=\{x_1,\dots,x_n\}$;
we proceed likewise for the second argument of $W$.
Note that $H(\xx_n)=+\infty$ if some entries $x_i$ and $x_j$ of $\xx_n$ coincide.
It is known that this Hamiltonian is superstable (cf.~\cite{Ruelle70}), 
which implies, in particular, the stability bound
\be{B}
   H(\gamma) \,\geq\, \,-\, B\,\#\gamma \qquad 
   \text{for every $\gamma\in\Gamma$}
\ee
and some stability constant $B>0$.

Associated with $\beta>0$, $u$ of \req{superstable}, and any activity $z>0$ 
-- which plays a similar role as the rate of a Poisson process --
there is at least one stationary point process $\GG$
which satisfies the so-called GNZ-equation 
(Georgii-Nguyen-Zessin~\cite{Georgii76,Zessin79})
\begin{align}\label{eq:GNZ} 
    \E\Bigl[\sum_{x\in\gamma}G(x,\gamma)\Bigr] \,=\,
    \E\left[z\int_{\R^d}  G(x,\gamma\cup \{x\})\,
            e^{ -\beta W(x\mid \gamma)}  \dx\right] \tag{GNZ}
\end{align}
for every nonnegative function $G:\R^d\times\Gamma\to[0,+\infty]$,
where we, again, have used the short-hand notation
$\xx_n=(x_1,\dots,x_n)$ with $x_i\in\R^d$.
Such a point process $\GG$ is called a $(\beta,z,u)$-\emph{Gibbs point process}.
From \req{GNZ} one can also derive a multivariate version of the GNZ-equation,
namely
\begin{align}\label{eq:MGNZ}
    \E\Biggl[\sum_{x_1\neq \dots \neq x_n\in\gamma}G(\xx_n,\gamma)\Biggr] \,=\,
    \E\left[ z^n\int_{(\R^d)^n}  G(\xx_n,\gamma\cup \{\xx_n\})
             e^{- \beta H(\xx_n)-\beta W(\xx_n\mid \gamma)}  \dxx_n\right] \tag{MGNZ}
\end{align}
for every nonnegative function $G:(\R^d)^n\times\Gamma\to[0,\infty]$.
It further follows from \req{GNZ}, \req{MGNZ} and \req{useofrho} that
$\GG$ admits correlation functions given by
\begin{subequations}
\label{eq:rho}
\be{rho-1}
   \rho^{(1)}(x) \,=\, \E\bigl[z e^{ -\beta W(x\mid \gamma)} \bigr]\,, \qquad x\in\R^d\,,
\ee
and
\be{rho-n}
   \rho^{(n)}(\xx_n) 
   \,=\, \E\bigl[z^ne^{-\beta H(\xx_n)-\beta W(\xx_n\mid\gamma)}\bigr]\,, \qquad
   \xx_n\in(\R^d)^n\,, 
\ee
\end{subequations}
for $n=2,3,\dots$, respectively. As has been established in \cite{Ruelle70}
these correlation functions satisfy a Ruelle bound for some $q=q(\beta,z,u)>0$. 
Note that if $u$ of \req{superstable} belongs to 
$L^\infty_{\rm loc}(\R^d\setminus\{0\})$ and $\beta>0$ is given
then $z>0$ can be tuned to realize any positive density $\rho$ for some 
$(\beta, z,u)$-Gibbs point process $\GG$, cf.~\cite{FrHa22}.

\section{Hyperuniformity and structure factor}
\label{Sec:Nonhyperuniformity}
For a stationary point process $\PP$ with intensity $\rho$
and $\Lambda=B_\ell(0)\subset\R^d$ the random variable
\be{N}
   N_\Lambda(\gamma) \,=\, \#(\gamma\cap\Lambda)
   \,=\, \sum_{x\in\gamma} 1_\Lambda(x)\,, \qquad \gamma\in\Gamma\,,
\ee
has the expectation $\E[N_\Lambda]=\rho|\Lambda|$. Accordingly, this is an
extensive quantity, i.e., its value is proportional to the volume of $\Lambda$.
The point process $\PP$ is called \emph{hyperuniform}, 
if the variance of $N_\Lambda$ is growing with a
smaller rate, i.e., if
\begin{align*}
    \lim_{\ell\to\infty} \frac{\Var\left[N_\Lambda\right]}{|
    \Lambda|} \,=\, 0\,.
\end{align*}
In the context of statistical physics non-hyperuniformity of the constituents
of a homogeneous fluid is measured in terms of the so-called 
\emph{compressibility} 
\be{compressibility}
   \kappa \,=\, \frac{\beta}{\rho} \,
                \lim_{\ell\to\infty}\frac{\Var\bigl[N_\Lambda]}{|\Lambda|}\,.
\ee 
\color{black}

The \emph{covariance measure} $\mathscr{C}$ of $\PP$ is defined by
\[
    \Cov \Bigl[\sum_{x \in \gamma}\testfctphi_1(x), 
               \sum_{x \in \gamma}\testfctphi_2(x)\Bigr] 
    \,=\, \rho\int_{\R^d}\int_{\R^d}\testfctphi_1(x) \testfctphi_2(x+y) 
          \rmd\mathscr{C}(y)\dx  
\]
for every bounded
$\testfctphi_1,\testfctphi_2\colon \R^d \to \R$ of compact support, 
cf., e.g., Br\'{e}maud~\cite{Bremaud20}. The covariance measure is a 
nonnegative locally finite measure on $(\R^d,\mathcal{B}(\R^d))$.
Further, there exists a nonnegative locally finite measure $\mathscr{S}$
on $(\R^d,\mathcal{B}(\R^d))$ such that 
\be{Bartlett}
    \Var\Bigl[\sum_{x \in \gamma }\testfctphi(x)\Bigr]
    \,=\, \rho \int_{\R^d} 
                  \big|\widehat{\testfctphi}(\xi)\big|^2
               \rmd \mathscr{S}(\xi)\,.
\ee
This measure is called the \emph{Bartlett spectral measure} 
or \emph{centered diffraction measure}. 
For point processes with correlation functions it follows from \req{useofrho}
that one has
\be{Varphi}
\begin{aligned}
   \Var\Bigl[\sum_{x \in \gamma}\testfctphi(x)\Bigr]
   &
   \,=\, \E\Bigl[
   \sum_{x_1\neq x_2 \in \gamma}
   \testfctphi(x_1)
   \testfctphi(x_2)
   \Bigr]
   \,+\,\E\Bigl[
   \sum_{x \in \gamma}\big(\testfctphi(x)\big)^2
   \Bigr]
   \,-\,\left(\E\Bigl[
   \sum_{x \in \gamma}\testfctphi(x)
   \Bigr]\right)^2
      \\[1ex]
   &\,=\, \int_{(\R^d)^2}
             \testfctphi(x_1)\testfctphi(x_2) \,\omega(x_1-x_2) \dxx_2
   \,+\, \int_{\R^d}\testfctphi^2(x) \rho\dx
   \,=\, \int_{\R^d}
   \int_{\R^d}
   \testfctphi(x_1)\testfctphi(x_1+x_2) \,\rho\rmd\mathscr{C}(x_2)\dx_1
\end{aligned}
\ee
for 
\begin{align*}
    \rmd\mathscr{C}
    \,=\, \frac{1}{\rho}\, \left(\omega\dx +\rho\,\delta_0\right).
\end{align*}
Accordingly, if $\omega\in L^1(\R^d)$ then the Bartlett spectral measure 
admits a density with respect to the Lebesgue measure, namely
\be{S}   
   S(\xi) \,=\, \frac{1}{\rho}\,\bigl(\omegahat(\xi)+\rho\bigr)\,.
\ee
This density $S$ is called the \emph{structure factor} or 
\emph{structure function}. 
In this case it follows from \req{Varphi} that
\[
   \frac{1}{|\Lambda|}\,\Var\bigl[N_\Lambda\bigr]
   \,=\, \frac{1}{|\Lambda|}\int_{\Lambda^2} \omega(x_1-x_2)\dxx_2 \,+\, \rho
   \,=\, \int_{\R^d} k_\Lambda(x)\omega(x)\dx \,+\, \rho
\]
with
\[
   k_\Lambda(x) \,=\, \frac{1}{|\Lambda|}\int_\Lambda 1_\Lambda(x+x')\dx'\,,
   \qquad x\in\R^d\,,
\]
and hence, compare the proof of Proposition~\ref{prop:chi4} below,
\be{S0gleich0}
   \lim_{\ell\to\infty}\frac{1}{|\Lambda|}\,\Var\bigl[N_\Lambda\bigr] 
   \,=\, \omegahat(0) \,+\, \rho \,=\, \rho\,S(0)\,.
\ee
Therefore, a stationary point process $\PP$, which admits a structure factor, 
is hyperuniform, if and only if $S(0)=0$. In the general case it follows 
from \req{Bartlett} that $\PP$ is hyperuniform, if and only if
\be{Bjoerklund24}
    \lim_{\ell\to \infty} \ell^d \mathscr{S} \bigl(B_{1/\ell}(0)\bigr) 
    \,=\, 0\,,
\ee
cf., e.g., Bj\"orklund and Hartnick~\cite{Bjoerklund24}. 

Since the truncated correlation function of the Poisson process vanishes
identically, the associated structure factor is the constant
function $S\equiv 1$, and hence, the Poisson process is non-hyperuniform. 
On the other hand, the Bartlett spectral measure of the stationary lattice
$\LL$ is given by
\begin{align*}
    \mathscr{S} \,=\, \sum_{k \in \Z^d\setminus\{0\}}\delta_k\,,
\end{align*}
cf.~\cite{Bjoerklund24},
and does not have a density with respect to the Lebesgue measure. 
Since $\mathscr{S}(B_{1/\ell}(0)) =0$ for all $\ell>1$ it follows from 
\req{Bjoerklund24} that $\LL$ is hyperuniform.

\subsection{Determinantal point processes with projection kernels}
For determinantal point processes defined by \req{rhodpp} the structure
factor exists and is readily calculated.

\begin{proposition}\label{Prop:DPPS}
For the determinantal point process $\KK$ defined by \req{rhodpp} the structure factor is given by
\begin{align}\label{eq:SfactDPP}
    S(\xi) \,=\, 1 - \bigl|(E+\xi) \cap E\bigr|.
\end{align}
In particular, $S(0)=0$, while $S(\xi) >0$ for all $\xi \neq 0$.
\end{proposition}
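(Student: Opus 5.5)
Since $\rho=K(0)=1$, I would first compute $\omega$ from \req{rhodpp} with $n=2$: $\rho_2(x)=\det\begin{pmatrix}K(0)&K(x)\\K(-x)&K(0)\end{pmatrix}=1-K(x)K(-x)=1-K(x)^2$, using that $K$ is even. Hence $\omega(x)=-K(x)^2$. By \req{K-L2}, $K\in L^2(\R^d)$, so $\omega\in L^1(\R^d)$ and the structure factor exists via \req{S}: $S(\xi)=1+\omegahat(\xi)=1-\widehat{K^2}(\xi)$.

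Next I would compute $\widehat{K^2}$. Since $K=\check{1}_E$ and $K$ is real and even, $\widehat{K}=1_E$, and the Fourier transform of a product of two $L^2$ functions is the convolution of their transforms (valid pointwise for $L^2$ functions, via Plancherel applied to $K$ and $e^{-2\pi\rmi x\cdot\xi}K$). Thus $\widehat{K^2}(\xi)=(1_E*1_E)(\xi)=\int 1_E(\eta)1_E(\xi-\eta)\,\rmd\eta=|E\cap(\xi-E)|$. By symmetry of $E$, $\xi-E=\xi+E$, so $\widehat{K^2}(\xi)=|(E+\xi)\cap E|$, which gives \req{SfactDPP}.

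For the last claims: $S(0)=1-|E|=0$. For $\xi\neq0$, $S(\xi)=|E\setminus(E+\xi)|\ge0$, and I must rule out $|E\,\triangle\,(E+\xi)|=0$. This is the main obstacle. The plan is to argue that if $|E\setminus(E+\xi)|=0$, then, since both sets have measure $1$, $1_E=1_{E+\xi}$ almost everywhere. Iterating gives $1_E=1_{E+k\xi}$ a.e. for all $k\in\Z$. Hence $1_E$ is (a.e.) periodic in direction $\xi$, and a nonzero nonnegative periodic function cannot be integrable. Concretely, take a fundamental strip $T=\{\eta:0\le\eta\cdot\xi/|\xi|^2<1\}$. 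Then $|E|=\sum_k|E\cap(T+k\xi)|=\sum_k|E\cap T|$, which is either $0$ or $\infty$, contradicting $|E|=1$. So $S(\xi)>0$ for $\xi\ne0$.
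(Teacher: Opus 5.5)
Your proof is correct and follows the paper's argument essentially step by step. You compute $\omega=-K^2\in L^1(\R^d)$ and $\omegahat=-1_E*1_E=-|(E+\xi)\cap E|$, and for $\xi\neq0$ you rule out $S(\xi)=0$ via the strip decomposition $|E|=\sum_{k\in\Z}|E\cap\mathcal{T}|$, using that $E$ is a.e.\ invariant under shifts by $k\xi$. Your Plancherel justification of $\widehat{K^2}=\widehat{K}*\widehat{K}$ for $K\in L^2(\R^d)$ supplies a detail the paper leaves implicit.
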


\begin{proof}
Using \req{rhodpp} and \req{Ursell} one finds 
\[
   \omega(x) \,=\, 1 - K^2(x) - 1 \,=\, - K^2(x)\,.
\]
Accordingly, $\omega\in L^1(\R^d)$, and from the convolution theorem, 
the properties of $K$, and the symmetry of $E$ it follows that
\begin{align*}
    \omegahat(\xi) 
    \,=\, - (\widehat{K}\ast \widehat{K})(\xi)
    \,=\, -(\mathrm{1}_E\ast \mathrm{1}_E) (\xi)
    \,=\, -\bigl|(E+\xi) \cap E\bigr|\,.
\end{align*}
This establishes \req{SfactDPP}, which shows that
$S(0)=1-|E|=0$.

Furthermore, $S(\xi)=0$, if and only if
$|(E+\xi) \cap E|= 1$, i.e., if and only if $E = E+\xi$ up to 
Lebesgue null-sets. Suppose there is some $\xi \neq 0 $ such that this 
equality holds true. Then one can define the strip
\begin{align*}
    \mathcal{T} \,:=\,
    \left\lbrace
       x \in \R^d \,:\, 0 \leq x \cdot \frac{\xi}{|\xi|} <  |\xi|
    \right\rbrace
\end{align*}
and write 
\begin{align*}
   E  \,=\, \bigcup_{k \in \mathbb{Z}} \big(E \cap (\mathcal{T}+k\xi) \big) 
   \,=\, \bigcup_{k \in \mathbb{Z}} \big((E+k\xi) \cap (\mathcal{T}+k\xi) \big) 
   \,=\, \bigcup_{k \in \mathbb{Z}} \big((E \cap \mathcal{T})+k\xi \big) .
\end{align*}
It follows that 
\begin{align*}
    |E| \,=\, \sum_{k \in \mathbb{Z}} \,\bigl|(E \cap \mathcal{T})+k\xi \bigr|
    \,=\, \sum_{k \in \mathbb{Z}} |E \cap \mathcal{T}|
\end{align*}
and thus $ |E \cap \mathcal{T}| =0$, and hence, $|E|=0$.
Since this contradicts our assumptions on $E$, no such $\xi\neq 0$ can exist. 
\end{proof}

Note that Proposition~\ref{Prop:DPPS} and \req{S0gleich0} imply
that the determinantal point process $\KK$ is hyperuniform;
compare \cite{Ghosh21}.


\subsection{Gibbs point processes with superstable pair interactions}
For Gibbs point processes $\GG$ with superstable pair interactions 
as in \req{superstable}, Ruelle~\cite{Ruelle70} has used an argument of 
Ginibre~\cite{Ginibre67} to show that the compressibility of $\GG$ 
is positive, i.e., that these point processes are non-hyperuniform.
Using the GNZ-equation this was extended to a more general class of 
Gibbs point processes by Dereudre and Flimmel~\cite{Flimmel24}.
One can further elaborate on this argument to show that the
structure factor of $\GG$ is strictly positive.

\begin{theorem}
\label{Thm:structureGibbs}
For a $(\beta,z,u)$-Gibbs point process $\GG$ with $\beta >0 $, $z>0$, and 
$u$ as in \req{superstable}, the Bartlett spectral measure 
$\mathscr{S}$ is strictly positive.
\end{theorem}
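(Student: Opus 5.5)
We interpret "strictly positive" as the existence of $c>0$ with $\mathscr{S}\geq c\,\rmd\xi$, i.e., $\rmd\mathscr{S} - c\,\rmd\xi$ is a nonnegative measure. By \req{Bartlett} this is equivalent to
\[
   \Var\Bigl[\sum_{x\in\gamma}\testfctphi(x)\Bigr] \,\geq\, c\,\rho\,\norm{\testfctphi}_{L^2}^2
\]
for every bounded real $\testfctphi$ of compact support, because $\int|\widehat\testfctphi|^2\dxi=\norm{\testfctphi}_{L^2}^2$ by Plancherel and such $\testfctphi$ are dense enough to identify measures. The plan is therefore to prove this variance lower bound, extending the Ginibre--Ruelle argument for $N_\Lambda$ to general test functions via the GNZ-equation, in the spirit of Dereudre and Flimmel.

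Write $F(\gamma)=\sum_{x\in\gamma}\testfctphi(x)$. First we show that $\Var[F]\geq \E\bigl[\Var[F\mid\mathcal{F}_{\Lambda^c}]\bigr]$ for a suitable localisation, or more directly we use a ``local modification'' argument. Concretely, we fix a small cube size $\delta$ with $\delta<r_0/\sqrt d$ (after shrinking). We tile $\R^d$ into cubes $Q_j$ of side $\delta$ and condition on the configuration outside $\bigcup_j Q_j^\circ$ together with the information of which cubes contain at most one point. A cleaner route is the following. By the law of total variance, $\Var[F]\geq \E\bigl[\Var(F\mid \mathcal{G})\bigr]$ for the $\sigma$-algebra $\mathcal{G}$ generated by $\gamma$ outside a sparse family of well-separated cubes $Q_j$ (centres on a lattice of spacing $L\gg$ interaction scale, say). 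Given $\mathcal{G}$, the configurations in distinct cubes are not independent, because $u$ has infinite range. We therefore use instead the GNZ-equation directly, which is the main technical step.

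GNZ step: we aim for a bound of the form
\[
   \Var[F] \,\geq\, c\int_{\R^d}\testfctphi(x)^2\,\rmd x ,
\]
using a Ginibre-type inequality. For a single point $x$, \req{GNZ} shows that the conditional intensity at $x$ is $z e^{-\beta W(x\mid\gamma)}$. With positive probability, bounded below uniformly in $x$ by stationarity, $\gamma$ has no points in $B_{r}(x)$ and $W(x\mid\gamma)\leq C$. This follows from superstability estimates and the Ruelle bound, as in \cite{Ruelle70}: the probability of an empty ball is positive and the tail energy is controlled. On such events the presence or absence of a point near $x$ changes $F$ by about $\testfctphi(x)$ and has nondegenerate conditional probability. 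Averaging over a partition of space into small cells, on which we first assume $\testfctphi$ is constant up to small error and then argue by density, we obtain variance contributions $\gtrsim \testfctphi(x)^2$ per unit volume. Cross-cell correlations are handled by conditioning on everything outside a single cell at a time and using $\Var[F]\ge \E\Var[F\mid\gamma\setminus Q]$ only after an orthogonal-martingale (Efron--Stein-type lower bound) decomposition. The right tool is the lower bound $\Var[F]\geq\sum_j \Var\bigl(\E[F\mid\mathcal{F}_{Q_j}]\bigr)$, which fails in general. The fallback we would follow is therefore Ginibre's original approach via the Ruelle/Kirkwood--Salsburg relations, expressing $\omegahat(\xi)+\rho$ and bounding it below uniformly.

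The main obstacle is exactly this decoupling: obtaining a lower bound on $\Var[F]$ that is uniform in $\xi$, or equivalently in the oscillation of $\testfctphi$, rather than only at $\xi=0$ as in the compressibility result. We would first try the Dereudre--Flimmel GNZ argument. It writes $\Var[F]$ as $\E\bigl[\sum_x \testfctphi(x)^2\bigr]$ minus correction terms and shows that the corrections cannot cancel the diagonal term completely, using a variational characterisation of the form $\Var[F]\geq \bigl(\E[F\,\partial G]\bigr)^2/\E[(\partial G)^2]$ with test functionals $G$ built from $\testfctphi$ (Cauchy--Schwarz with the GNZ integration-by-parts identity). Choosing $G(\gamma)=\sum_{x\in\gamma}\testfctphi(x)\,1\{\gamma\cap B_{r}(x)=\{x\}\}$ should give a ratio comparable to $\norm{\testfctphi}_{L^2}^2$, with constants depending only on $\beta,z,u$.
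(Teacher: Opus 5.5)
Your proposal does not yet contain a proof. You list several routes and discard the first ones yourself. Conditioning on the configuration outside separated cubes does not decouple, because $u$ has infinite range. The bound $\Var[F]\geq\sum_j\Var(\E[F\mid\mathcal{F}_{Q_j}])$ indeed fails for dependent blocks. Ginibre's Kirkwood--Salsburg approach is only named as a fallback. The last route is stated as something that ``should'' work, but the mean-zero functional $\partial G$ is never defined, and neither $\E[F\,\partial G]$ nor $\E[(\partial G)^2]$ is computed. The empty-ball and bounded-energy heuristics never enter an actual estimate.

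The missing idea is the concrete GNZ companion of the linear statistic and the exact cancellation it produces. Take $\PhiG(\gamma)=\sum_{x\in\gamma}\testfctphi(x)$ and $\PhiG'(\gamma)=z\int_{\R^d}\testfctphi(x)e^{-\beta W(x\mid\gamma)}\dx$, which have the same mean by \req{GNZ}. Using \req{GNZ}, \req{MGNZ}, \req{rho-n} and $e^{-\beta u}=1+f$, you find that $\E[\PhiG^2]$, $\E[\PhiG\PhiG']$ and $\E[(\PhiG')^2]$ all contain the same two-point term $\int\int\testfctphi(x_1)\testfctphi(x_2)\rho_2(x_1-x_2)\dxx_2$. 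Hence $\Cov[\PhiG,\PhiG-\PhiG']=\rho\norm{\testfctphi}_{L^2}^2$ and $\E[(\PhiG-\PhiG')^2]=\rho\norm{\testfctphi}_{L^2}^2-\mathcal{R}$, with $\mathcal{R}=z^2\int\int\testfctphi(x_1)\testfctphi(x_2)f(x_2-x_1)\E[e^{-\beta W(x_1,x_2\mid\gamma)}]\dxx_2$. So the decoupling you identify as the main obstacle disappears: the off-diagonal correlations cancel exactly, uniformly in the oscillation of $\testfctphi$.

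The one remaining estimate is $|\mathcal{R}|\leq Cz^2\norm{f}_{L^1}\norm{\testfctphi}_{L^2}^2$. It needs two ingredients absent from your proposal. The first is integrability of the Mayer function, $f\in L^1(\R^d)$ by \req{superstable}, combined with Young's inequality. The second is a bound on $\E[e^{-\beta W(x_1,x_2\mid\gamma)}]$ uniform in $(x_1,x_2)$, which Lemma~\ref{Lem2.1} derives from the Ruelle bound and the stability bound \req{B}. Cauchy--Schwarz then gives $\Var[\PhiG]\geq\rho^2\norm{\testfctphi}_{L^2}^2/(\rho+Cz^2\norm{f}_{L^1})$. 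The paper writes the same argument via $\PhiG_\eps=(1-\eps)\PhiG+\eps\PhiG'$ and optimizes over $\eps$.

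Your choice $G(\gamma)=\sum_{x\in\gamma}\testfctphi(x)\,1\{\gamma\cap B_r(x)=\{x\}\}$ could probably be pushed through the same way. The cross terms in $\E[F\,\partial G]$ again cancel by GNZ, leaving the intensity of $r$-isolated points, which is positive for small $r$. But the indicator buys nothing: it adds further terms to $\E[(\partial G)^2]$ that must be bounded, and it requires this extra positivity argument.
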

 
\begin{proof}
The idea of the proof is to use two different random variables with the same 
expectation. 
Let $\testfctphi$ be a bounded function with compact support, and consider
the random variable
\[
   \PhiG(\gamma) \,=\, \sum_{x\in\gamma} \testfctphi(x)\,, \qquad \gamma\in\Gamma\,.
\]
According to \req{GNZ}, the random variable $\PhiG'$ given by
\[
    \PhiG'(\gamma) 
    \,=\, z\int_{\R^d} \testfctphi(x)\,e^{-\beta W(x\mid\gamma)} \dx\,, 
    \qquad \gamma\in\Gamma\,,
\]
has the same expectation as $\PhiG$, namely
\[
   \E\bigl[\PhiG'\bigr] \,=\, \E\bigl[\PhiG\bigr]
   \,=\, \rho\int_{\R^d}\testfctphi(x)\dx\,,
\]
where the latter identity is a consequence of \req{useofrho}.
Furthermore, compare \req{Varphi},
\be{absNxi}
    \E \left[\PhiG^2\right]
    \,=\, \int_{(\R^d)^2} 
             \testfctphi(x_1)\testfctphi(x_2)\,\rho_2(x_1-x_2)\dxx_2
          \,+\, \rho\int_{\R^d}\testfctphi^2(x)\dx\,.
\ee

Next we compute $\E\left[\PhiG\PhiG'\right]$ by using \req{GNZ} and obtain
\begin{align*}
    \E \left[\PhiG\PhiG'\right]
    &\,=\, \E\left[
               \sum_{x_1 \in\gamma} \testfctphi(x_1)\
               z\int_{\R^d} 
                   \testfctphi(x_2)\,e^{-\beta W(x_2 \mid \gamma)} \dx_2
             \right] \\
    &\,=\, \E\left[ z^2 \int_{(\R^d)^2} \testfctphi(x_1)\testfctphi(x_2)\,  
                    e^{-\beta W(x_2 \mid \gamma\cup\{x_1\})} 
                    e^{-\beta W(x_1\mid\gamma)}\dxx_2
             \right] \\[1ex]
    &\,=\, \E \left[ z^2 \int_{(\R^d)^2} \testfctphi(x_1)\testfctphi(x_2) \, 
                     e^{-\beta H(\xx_2) -\beta W(\xx_2 \mid \gamma)} \dxx_2
              \right].
\end{align*}
It therefore follows from \req{rho-n} that
\be{NxiNxiprime}
    \E \left[\PhiG\PhiG'\right]
    \,=\, \int_{(\R^d)^2} 
             \testfctphi(x_1)\testfctphi(x_2)\,\rho_2(x_1-x_2)\dxx_2\,.
\ee
Likewise we compute
\begin{align*}
    \E \left[(\PhiG')^2\right]
    &\,=\, \E\left[ z^2 
                    \int_{(\R^d)^2}
                       \testfctphi(x_1)\testfctphi(x_2)\,
                       e^{-\beta W(\xx_2 \mid \gamma)} 
                    \dxx_2\ \right] \\[1ex]
    &\,=\, \E\left[ z^2 
                    \int_{(\R^d)^2}
                       \testfctphi(x_1)\testfctphi(x_2)\,
                       e^{-\beta H(\xx_2) - \beta W(\xx_2 \mid \gamma)} 
                    \dxx_2\ \right] 
     \\[1ex]
    &\phantom{\,=}\ \,-\,
           \E\left[ z^2
                    \int_{(\R^d)^2} 
                       \testfctphi(x_1)\testfctphi(x_2)f(x_2-x_1)\,
                       e^{-\beta W(\xx_2 \mid \gamma)} 
                    \dxx_2\ \right] \\[1ex]            
    &\,=\, \int_{(\R^d)^2} 
              \testfctphi(x_1)\testfctphi(x_2)\,\rho_2(x_1-x_2)\dxx_2
           \,-\,
           \E\left[ z^2
                    \int_{(\R^d)^2} 
                       \testfctphi(x_1)\testfctphi(x_2)f(x_2-x_1)\,
                       e^{-\beta W(\xx_2 \mid \gamma)} 
                    \dxx_2\ \right]          
\end{align*}
by utilizing the Mayer $f$-function~\req{f}. For brevity, define
\begin{align*}
    \mathcal{R}
    \,=\, z^2 \int_{(\R^d)^2} 
                 \testfctphi(x_1)\testfctphi(x_2) f(x_2-x_1) \,
                 \E\left[ e^{-\beta W(\xx_2 \mid \gamma)} \right] \dxx_2\,,
\end{align*}
so that we can rewrite
\be{absNxiprime}
    \E \left[(\PhiG')^2\right]
    \,=\, \int_{(\R^d)^2} 
             \testfctphi(x_1)\testfctphi(x_2)\,\rho_2(x_1-x_2)\dxx_2
          \,-\, \mathcal{R}\,.
\ee
As we will show in Lemma~\ref{Lem2.1}
the expectation in the definition of $\mathcal{R}$ is uniformly bounded
in $(\R^d)^2$. This implies that there is some constant $C>0$ such that
\be{Rxibound}
\begin{aligned}
    \left|\mathcal{R} \right| 
    &\,\leq\, C z^2
              \int_{(\R^d)^2} \bigl|\testfctphi(x_1)\testfctphi(x_2)f(x_1-x_2)\bigr|\dxx_2
     \,=\, C z^2
           \int_{\R^d} |\testfctphi(x)\bigr|\,(|\testfctphi|*|f|)(x) \dx\\[2ex]
    &\,\leq\, C z^2
              \norm{\testfctphi}_{L^2} \bigl\||\testfctphi|*|f|\big\|_{L^2}
     \,\leq\, C z^2\,\norm{f}_{L^1}\norm{\testfctphi}_{L^2}^2\,.              
\end{aligned}
\ee

Now let
\[
    \PhiG_\eps \,=\, (1-\eps) \PhiG \,+\, \eps \PhiG'
\]
for some $\eps>0$ to be chosen later.
Using \req{absNxi}, \req{NxiNxiprime}, and \req{absNxiprime} it follows that
\begin{align*}
   \E\left[\PhiG_\eps^2\right]
   &\,=\, \int_{(\R^d)^2} 
             \testfctphi(x_1)\testfctphi(x_2)\,\rho_2(x_1-x_2)\dxx_2
          \,+\, (1-\eps)^2\rho\,\norm{\testfctphi}_{L^2}^2
          \,-\, \eps^2\mathcal{R} \\[1ex]
   &\,=\, \E\left[\PhiG^2\right]
          \,+\,(\eps^2-2\eps)\rho\,\norm{\testfctphi}_{L^2}^2
          \,-\, \eps^2\mathcal{R}\,.
\end{align*}
Since we have
\[
   \E\bigl[\PhiG_\eps\bigr]^2 \,=\, \E\bigl[\PhiG\bigr]^2
\]
by the construction of $\PhiG'$ we conclude that
\begin{align*}
    \Var\bigl[\PhiG\bigr]
    &\,=\, \Var\bigl[\PhiG_\eps\bigr]
           \,+\, (2\eps\rho \,-\, \eps^2\rho)\norm{\testfctphi}_{L^2}^2
           \,+\, \eps^2\mathcal{R}\,.
\end{align*}
It therefore follows from \req{Rxibound} and the nonnegativity of the
variance of $\PhiG_\eps$ that
\[
   \Var\bigl[\PhiG\bigr]
   \,\geq\, \bigl(2\eps\rho \,-\, C'\eps^2)\norm{\testfctphi}_{L^2}^2
\]
for some $C'>0$, independent of $\testfctphi$. Accordingly, choosing 
$\eps=\rho/C'$ we obtain that
\[
   \Var\bigl[\PhiG\bigr] \,\geq\, (\rho^2/C')\,\norm{\testfctphi}_{L^2}^2
   \,=\, (\rho^2/C')\,\norm{\widehat\testfctphi}_{L^2}^2\,.
\]

It therefore follows from \req{Bartlett} that
\[
   \int_{\R^d} \bigl|\widehat\testfctphi(\xi)\bigr|^2\rmd\mathscr{S}(\xi)
   \,\geq\, (\rho/C')\,\norm{\widehat\testfctphi}_{L^2}^2\,,
\]
and since this result holds true for every bounded $\testfctphi$ with 
compact support, the Bartlett spectral measure is strictly positive.
\end{proof}

\begin{remark}
\label{Rem:gasphase}
\rm
Ruelle has shown, cf.~\cite[Theorem~4.4.8]{Ruel69}, 
that the truncated correlation function $\omega$ of $\GG$
belongs to $L^1(\R)$, provided that the activity satisfies
\be{z0}
   z\,<\, z_0 \,=\, \frac{1}{\norm{f}_{L^1}}\,\frac{1}{e^{2\beta B+1}}\,.
\ee
In the context of statistical physics, the corresponding range of activities is 
commonly associated with the \emph{gas phase} of the fluid under consideration.
It follows that the corresponding Gibbs point processes admit a 
continuous structure factor $S$ according to \req{S}, and this is a 
strictly positive function by virtue of Theorem~\ref{Thm:structureGibbs}.
\fin
\end{remark}

\section{Second order statistics}
\label{Sec:2ndorder}
While hyperuniformity quantifies the asymptotic variance of the number of 
points it does not say anything about the \emph{local disorder} of the points 
in $\Lambda$. We now want to investigate this property somewhat further. 
For this we look at second order functionals, 
e.g.~the number of neighbor points within a given distance $R>0$ in $\Lambda$. 
For the stationary lattice $\LL$ one expects that this number is roughly equal 
to a (deterministic) multiple of $N_\Lambda$ up to boundary corrections, 
and thus its variance should grow more slowly than $|\Lambda|$; 
see Section~\ref{Subsec:Technicalities-lattice} for a rigorous computation. 
However, for general point processes the answer is not as obvious.

For $\Lambda=B_\ell(0)$ we call $V_\Lambda$ a \emph{second order statistics},
if there is an even function $\testfct:\R^d\to\R$ with suitable properties, 
such that
\be{G}
   \Testfct_\Lambda (\gamma) 
   \,=\, \sum_{x_1 \neq x_2 \in \gamma}\chiL(x_1)\chiL(x_2) \testfct(x_1-x_2)\,.
\ee
It follows that
\begin{align*}
   \Testfct_\Lambda(\gamma)^2
   &\,=\, \!\!\!\sum_{x_1\neq \dots\neq x_4\in\gamma} \!
             \Bigl(\prod_{i=1}^4\chiL(x_i)\Bigr)  
             \testfct(x_1-x_2)\testfct(x_3-x_4)
          \,+\ 4\!\!\!\!\sum_{x_1\neq x_2 \neq x_3\in\gamma}\!
             \Bigl(\prod_{i=1}^3 \chiL(x_i)\Bigr)  
             \testfct(x_1-x_2)\testfct(x_1-x_3)\\[1ex]
   &\qquad \          \,+\ 2\!\!\!\sum_{x_1\neq x_2\in\gamma}\!\!
             \chiL(x_1)\chiL(x_2)  
             \testfct(x_1-x_2)^2\,,
\end{align*}
and if the stationary point process $\PP$ admits 
correlation functions, then we further deduce from \req{useofrho} that
\be{EV}
   \E\bigl[ \Testfct_\Lambda\bigr] 
   \,=\, \int_{\Lambda^2} 
   \testfct(x_1-x_2)\rho_2(x_1-x_2)\dxx_2
\ee
and
\be{variance2ndord}
\begin{aligned}
   \E\bigl[\Testfct_\Lambda^2\bigr]
   &\,=\, \int_{\Lambda^4} \testfct(x_1-x_2)
   \testfct(x_3-x_4)\rho^{(4)}(\xx_{4})\dxx_4\\[1ex]
   &\phantom{\,=\,}
          \ +\, 4 \int_{\Lambda^3} 
                     \testfct(x_1-x_2)
                     \testfct(x_1-x_3) \rho^{(3)}(\xx_{3})
                  \dxx_3
          \,+\, 2 \int_{\Lambda^2} 
          \testfct(x_1-x_2)^2 \rho_2(x_1-x_{2})\dxx_2\,,
\end{aligned} 
\ee
provided that the integrals converge.
Introducing
\be{chi4}
   \chi^{(4)}(\xx_4) 
   \,=\, \rho^{(4)}(\xx_4) \,-\, \rho_2(x_1-x_2)\rho_2(x_3-x_4)\,.
\ee
we can therefore rewrite
\be{VarV}
\begin{aligned}
   \Var\bigl[\Testfct_\Lambda\bigr]
   &\,=\, \int_{\Lambda^4} 
   \testfct(x_1-x_2)
   \testfct(x_3-x_4)\chi^{(4)}(\xx_4)\dxx_4
           \\[1ex]
   &\phantom{\,=\,}
          \ +\, 4 \int_{\Lambda^3}
                     \testfct(x_1-x_2)
                     \testfct(x_1-x_3)\rho^{(3)}(\xx_3)\dxx_3
          \,+\, 2\int_{\Lambda^2} 
          \testfct(x_1-x_2)^2\rho_2(x_1-x_2)\dxx_2\,.
\end{aligned}
\ee

\begin{proposition}\label{prop:chi4}
Let $\PP$ be a stationary point process with correlation functions,
and assume that $\rho^{(2)}$ and $\rho^{(3)}$ are bounded
and that $\chi^{(4)}$ of \req{chi4} satisfies  
\begin{align}
\label{eq:chi4-bound}
   \int_{\R^d}\left|\chi^{(4)}(x_1,x_2,x_3,x_4+x_3)\right|\dx_3 \in L^\infty((\R^{d})^3)
\end{align}
as a function of $(x_1,x_2,x_4)\in(\R^d)^3$.
Further, let $V_\Lambda$ be defined 
by \req{G} for some even function $v\in L^1(\R^d)\cap L^2(\R^d)$. Then
\be{EVlim}
   \lim_{\ell\to\infty} \frac{1}{|\Lambda|}\,\E\bigl[V_\Lambda\bigr]
   \,=\, \int_{\R^d} v(x)\rho_2(x)\dx
\ee
and
\be{Hank17b}
\begin{aligned}
   \lim_{\ell\to\infty}
   \frac{1}{|\Lambda|}\,\Var\bigl[\Testfct_\Lambda\bigr]
   &\,=\, \int_{\R^d} \testfct(x_1) 
             \int_{\R^d} 
                \testfct(x_2)\int_{\R^d}\chi^{(4)}(x_1,0,x_3,x_3+x_2)\dx_3\dx_2\dx_1
                \\[1ex]
   &\phantom{\,=\,}
          \ +\, 4 \int_{\R^d} \testfct(x_1) 
                     \int_{\R^d} \testfct(x_2)\rho^{(3)}(0,x_1,x_2)\dx_2\dx_1
          \ +\, 2\int_{\R^d} \testfct^2(x)\rho_2(x)\dx\,.
\end{aligned}
\ee
\end{proposition}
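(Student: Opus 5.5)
Each term in \req{EV} and \req{VarV} will be rewritten as an integral of a stationary integrand against a kernel built from the window $\Lambda$, and the limit will then be taken by dominated convergence. The basic device is already used for \req{S0gleich0}: the function
\[
   k_\Lambda(y) \,=\, \frac{1}{|\Lambda|}\int_\Lambda 1_\Lambda(x+y)\dx
   \,=\, \frac{|\Lambda\cap(\Lambda-y)|}{|\Lambda|}
\]
satisfies $0\le k_\Lambda\le 1$, and $k_\Lambda(y)\to 1$ for every fixed $y$ as $\ell\to\infty$, because $\Lambda=B_\ell(0)$ and $|B_\ell\,\triangle\,(B_\ell-y)|=O(\ell^{d-1})$. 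More generally, for a finite family $y_1,\dots,y_m$ the normalized volume $|\Lambda|^{-1}\,|\bigcap_j(\Lambda-y_j)|$ lies in $[0,1]$ and tends to $1$ pointwise.

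For \req{EVlim} I substitute $x_2=x_1-x$ in \req{EV} and use the evenness of $v$ and $\rho_2$. This gives
\[
   \frac{1}{|\Lambda|}\E[V_\Lambda] \,=\, \int_{\R^d} v(x)\rho_2(x)\,k_\Lambda(x)\dx .
\]
Since $\rho_2$ is bounded and $v\in L^1$, dominated convergence applies. The last term of \req{VarV} is treated in the same way: its integrand $v^2\rho_2$ is dominated by $\norm{\rho_2}_\infty v^2\in L^1$, because $v\in L^2$.

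For the three-point term I use stationarity, $\rho^{(3)}(x_1,x_2,x_3)=\rho^{(3)}(0,x_2-x_1,x_3-x_1)$. Substituting $x_2=x_1+y_1$ and $x_3=x_1+y_2$ and using evenness of $v$, the term becomes
\[
   4\int\!\!\int v(y_1)v(y_2)\rho^{(3)}(0,y_1,y_2)\,
   \frac{|\Lambda\cap(\Lambda-y_1)\cap(\Lambda-y_2)|}{|\Lambda|}\dy_1\dy_2 .
\]
This is dominated by $\norm{\rho^{(3)}}_\infty|v(y_1)||v(y_2)|$, which is integrable on $(\R^d)^2$ since $v\in L^1$. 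Dominated convergence then yields the middle term of \req{Hank17b}.

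The four-point term is where the assumption \req{chi4-bound} enters, and I expect it to be the main obstacle. A pure translation leaves an integral over three free variables, so the dominating function must use the integrability of $\chi^{(4)}$ in the "distance between the two pairs" variable. I parametrize the four points as
\[
   x_1=a+y_1,\quad x_2=a,\quad x_3=a+y_3,\quad x_4=a+y_3+y_2 .
\]
Stationarity gives $\chi^{(4)}(\xx_4)=\chi^{(4)}(y_1,0,y_3,y_3+y_2)$, and the factor $v(x_1-x_2)v(x_3-x_4)$ becomes $v(y_1)v(y_2)$ by evenness. Integrating out $a$ produces the normalized volume $|\Lambda|^{-1}\,|\Lambda\cap(\Lambda-y_1)\cap(\Lambda-y_3)\cap(\Lambda-y_3-y_2)|\in[0,1]$, which tends to $1$ pointwise. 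The integrand is therefore dominated by $|v(y_1)||v(y_2)||\chi^{(4)}(y_1,0,y_3,y_3+y_2)|$. Integrating first in $y_3$, assumption \req{chi4-bound} (with $x_1=y_1$, $x_2=0$, $x_4=y_2$) bounds the $y_3$-integral by a constant $M$. Tonelli then gives an integrable majorant with total mass at most $M\norm{v}_{L^1}^2$.

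One point needs care here: \req{chi4-bound} is stated as an $L^\infty$ bound, i.e.\ for almost every $(x_1,x_2,x_4)$, whereas above $x_2=0$ is fixed. I will handle this by keeping $x_2$ as an extra integration variable: substituting $x_2=a$ and shifting the other points only after using stationarity, so that the bound is invoked in its a.e. form. The same computation also shows that the integrals in \req{VarV} converge absolutely, which justifies that formula. Finally, dominated convergence yields the first term of \req{Hank17b}, and summing the three limits completes the proof.
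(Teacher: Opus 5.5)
Your proposal is correct and follows the paper's own proof. Both rewrite each term of \req{EV} and \req{VarV} using the window kernels $k_\Lambda^{(1)},k_\Lambda^{(2)},k_\Lambda^{(3)}$ (your normalized intersection volumes), which are bounded by one and tend to one pointwise, and then conclude by dominated convergence; you additionally spell out the integrable majorants, notably the Tonelli argument with \req{chi4-bound} for the $\chi^{(4)}$ term, which the paper leaves implicit.
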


\begin{proof}
Using the symmetry of $\testfct$ and the translation invariance of the 
correlation functions and of $\chi^{(4)}$, it follows from 
\req{EV} and \req{VarV} that
\[
   \frac{1}{|\Lambda|}\,\E\bigl[V_\Lambda\bigr] \,=\, \int_{\R^d} k_\Lambda^{(1)}(x)v(x)\rho_2(x)\dx
\]
and
\begin{align*}
   \frac{1}{|\Lambda|}\,\Var\bigl[V_\Lambda\bigr]
   &\,=\, \int_{(\R^d)^3} 
             k_\Lambda^{(3)}(\xx_3)\,
             \testfct(x_1)\testfct(x_2)
             \chi^{(4)}(x_1,0,x_3,x_3+x_2)\dxx_3
           \\[1ex]
   &\phantom{\,=\,}
          \ +\, 4 \int_{(\R^d)^2} 
                     k_\Lambda^{(2)}(\xx_2)\,
                     \testfct(x_1)\testfct(x_2)
                     \rho^{(3)}(0,x_1,x_2)\dxx_2
          \,+\, 2 \int_{\R^d} k_\Lambda^{(1)}(x)\, \testfct(x)^2\rho_2(x)\dx
\end{align*}
with
\begin{subequations}
\begin{align}
\label{eq:k0}
   k_\Lambda^{(1)}(x) 
   &\,=\, \frac{1}{|\Lambda|}\int_\Lambda \chiL(x+x')\dx'\,, \qquad x\in\R^d\,,
   \\[1ex]
\label{eq:k2}
   k_\Lambda^{(2)}(\xx_2)
   &\,=\, \frac{1}{|\Lambda|}\int_\Lambda \chiL(x_1+x')\chiL(x_2+x')\dx'\,,
   \qquad \xx_2\in(\R^d)^2\,,\\[1ex]
\intertext{and}
\nonumber
   k_\Lambda^{(3)}(\xx_3)
   &\,=\, \frac{1}{|\Lambda|}\,\int_{\Lambda} 
          \chiL(x_1+x')\chiL(x_3+x')\chiL(x_2+x_3+x')\dx'\,, 
   \qquad \xx_3\in(\R^d)^3\,.
\end{align}
\end{subequations}
The three functions $k_\Lambda^{(1)}$, $k_\Lambda^{(2)}$ 
and $k_\Lambda^{(3)}$ are bounded by one, and converge pointwise to one as the radius $\ell$ of $\Lambda$ goes to infinity.
By the dominated convergence theorem the claim therefore follows.
\end{proof}

Replacing $v$ by $|v|$ it follows from \req{EV}
that the random variable $|V_\Lambda|$
has a finite expectation value, and hence, $V_\Lambda(\gamma)$ is finite
with probability one for $\gamma\in\Gamma$ under the assumptions on
$\PP$ and $v$ of Proposition~\ref{prop:chi4}.

According to \req{EVlim} $\bigl|\E\bigl[V_\Lambda\bigr]\bigr|$ 
is an extensive quantity generically. What can be said about the
variance of this random variable?
The easiest example to look at is the Poisson process with 
rate $\rho$. Here, $\chi^{(4)}=0$ by virtue
of \req{rhopoiss}, and hence, \req{Hank17b} implies that 
\begin{align*}
    \lim_{\ell\to\infty}
   \frac{1}{|\Lambda|}\,\Var\bigl[\Testfct_\Lambda\bigr]
    \,=\, 4 \rho^3\Bigl(\int_{\R^d}\testfct(x)\dx\Bigr)^2 \,+\, 2\rho^2\,\norm{\testfct}^2_{L^2}
    \,>\, 0
\end{align*}
for every even function $\testfct\in L^1(\R^d)\cap L^2(\R^d)\setminus\{0\}$.

In the sequel we investigate
this question for the hyperuniform determinantal point process $\KK$ and the
non-hyperuniform Gibbs process $\GG$.

\subsection{Determinantal point processes with projection kernels}
For determinantal point processes we have the following result.

\begin{theorem}\label{Thm:NHUDPP}
Let $V_\Lambda$ be given by \req{G}. Then for the determinantal point process $\KK$ 
defined in \req{rhodpp} there holds
\be{Thm:NHUDPP}
\lim_{\ell\to \infty}\frac{1}{|\Lambda|}\Var \left[
    \Testfct_\Lambda
    \right] \,>\, 0
\ee
for every even function $\testfct\in L^1(\R^d)\cap L^2(\R^d) \setminus \{0\}$.
\end{theorem}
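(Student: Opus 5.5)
One route is to first check that the hypotheses of Proposition~\ref{prop:chi4} hold for $\KK$, so that the limit exists and is given by \req{Hank17b}. Then I would evaluate that limit in Fourier variables and show it is a strictly positive quadratic form in $\Ghat$.

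\textbf{Step 1: the hypotheses of Proposition~\ref{prop:chi4}.} $\rho^{(2)}$ and $\rho^{(3)}$ are bounded by the Ruelle bound with $q=1$. For \req{chi4-bound} I would expand the $4\times4$ determinant \req{rhodpp} as a sum over permutations. The product $\rho_2(x_1-x_2)\rho_2(x_3-x_4)$ collects exactly those permutations that preserve the blocks $\{1,2\}$ and $\{3,4\}$. Every remaining permutation contains at least two kernel factors $K(x_i-x_j)$ with $i\in\{1,2\}$ and $j\in\{3,4\}$.

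Integrating in $x_3$ with $x_4=x_3+x_2$, each such term is bounded by $\int |K(a-x_3)K(b-x_3)|\dx_3\le \norm{K}_{L^2}^2=1$, using Cauchy--Schwarz, $|K|\le1$ and \req{K-L2}. So \req{chi4-bound} holds.

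\textbf{Step 2: rewriting the limit via Plancherel.} The cleanest route is to recognize the limit as the variance of a linear statistic of the Palm/cumulant structure. Concretely, I would compute each of the three terms in \req{Hank17b} as an integral over frequencies, using $\widehat K=1_E$. Every term is a cyclic product of $K$'s, and such products become integrals of products of indicators $1_E(\xi_i)$ against $\Ghat$ evaluated at frequency differences.

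For instance, the cumulant part produces terms like
\[
\int \Ghat(\xi-\eta)\Ghat(\zeta-\theta)\,1_E(\xi)1_E(\eta)1_E(\zeta)1_E(\theta)\,\delta(\cdots)
\]
(two-cycle and four-cycle contributions). The $\rho^{(3)}$ term gives $\rho^3\widehat v(0)^2$-type terms minus three-cycle corrections. The last term gives $2\int v^2(1-K^2)$.

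\textbf{Step 3: positivity, the main obstacle.} The main obstacle is to exhibit the total as a manifestly nonnegative expression and to show that it vanishes only for $v=0$. My plan is to use the standard cumulant formula for projection DPPs. The limiting variance per volume of a pair statistic should equal
\[
\int_{E\times E^c}\Big|\int_{E} \Ghat(\xi-\eta)\ldots\Big|^2
\]
that is, a sum of squares of the form $\int_{\xi\in E}\int_{\eta\notin E}|F_v(\xi,\eta)|^2$. This reflects the fact that, for a projection, fluctuations come from transitions out of $\mathrm{ran}\,\mathscr K$.

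I would try to write $V_\Lambda$ (up to boundary terms) as the second quantization of the operator $A=M_{\chiL}\,(\text{multiplication by } v(x-y))$. Its variance can then be expressed through the Hilbert--Schmidt norms of $(I-\mathscr K)A\mathscr K$-type operators, which are nonnegative.

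Positivity then reduces to showing that the per-volume Hilbert--Schmidt density does not vanish. If it did, $\Ghat(\xi-\eta)$ would have to vanish (or satisfy a cancellation identity) for all $\xi\in E$, $\eta\notin E$. Since $E$ has measure $1$ and $E^c$ has infinite measure, the set $E-E^c$ essentially covers $\R^d$. A translation argument like the one in Proposition~\ref{Prop:DPPS} would then force $\Ghat\equiv0$, hence $v=0$. If this operator route proves too cumbersome, the fallback is to verify the sum-of-squares identity directly from the Fourier formulas of Step 2.
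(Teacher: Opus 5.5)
Your Step~1 is correct. Every non-block-preserving permutation in the $4\times 4$ determinant carries two factors $K(a-x_3)$ and $K(b-x_3)$, and Cauchy--Schwarz with $\norm{K}_{L^2}=1$ bounds their $x_3$-integral. This supplies a check the paper leaves implicit when it inserts the determinantal correlation functions into \req{Hank17b}.

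The gap is in Steps~2--3, which carry the whole content of the theorem. The explicit evaluation of \req{Hank17b} (the paper's Lemma~\ref{Lem:NHUDPP}) is only described, not carried out. Moreover, the sum-of-squares structure you guess is the wrong one. A single integral over $E\times E^c$, i.e.\ $\|(I-\mathscr K)A\mathscr K\|_{HS}$ for a one-body $A$, describes the one-body particle--hole channel. For a translation-invariant pair statistic the effective one-body operator commutes with $\mathscr K$ up to boundary commutators with $\chiL$, so this channel is only $o(|\Lambda|)$; this is the same mechanism that makes $N_\Lambda$ hyperuniform. The extensive part comes from \emph{pair} transitions $p_1,p_2\in E\to h_1,h_2\notin E$ with $p_1+p_2=h_1+h_2$, whose antisymmetrized amplitude is $\widehat v(h_1-p_1)-\widehat v(h_1-p_2)$. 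Indeed, set $\theta(\xi_1,\xi_2)=|((E+\xi_1)\setminus E)\cap((E+\xi_2)\setminus(E+\xi_1+\xi_2))|$. Then Lemma~\ref{Lem:NHUDPP}, \req{intM} and the symmetry of $\theta$ give
\[
  \lim_{\ell\to\infty}\frac{1}{|\Lambda|}\Var\bigl[\Testfct_\Lambda\bigr]
  \,=\,\int_{(\R^d)^2}\bigl(\widehat v(\xi_1)-\widehat v(\xi_2)\bigr)^2\,\theta(\xi_1,\xi_2)\,\rmd(\xi_1,\xi_2)\,,
\]
which is exactly what the paper's Cauchy--Schwarz step encodes.

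Decisively, your positivity mechanism cannot work. Vanishing of this form does not force $\widehat v(\xi-\eta)=0$ on $E-E^c$. It only gives the cancellation identity $\widehat v(\xi_1)=\widehat v(\xi_2)$ on $\{\theta>0\}$, and every constant satisfies it. This is intrinsic: $\widehat v\equiv c$ corresponds to $v=c\,\delta_0$, for which $V_\Lambda\equiv 0$ because $x_1\neq x_2$. Hence no covering argument of the type ``$E-E^c$ fills $\R^d$'' can yield $\widehat v\equiv 0$. The decay $\widehat v\in C_0(\R^d)$, which follows from Riemann--Lebesgue since $v\in L^1$, must enter.

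What is needed is the paper's argument:
\begin{itemize}
\item For $\xi_1\neq 0$ one has $\int\theta(\xi_1,\xi_2)\rmd\xi_2=|(E+\xi_1)\setminus E|^2>0$ by the translation argument of Proposition~\ref{Prop:DPPS}, so $\theta(\xi_1,\xi_2)>0$ for some $\xi_2$.
\item Continuity of $\theta$ and $\widehat v$ then makes $\widehat v$ constant near every $\xi_1\neq 0$, hence constant on $\R^d$.
\item Since $\widehat v\in C_0(\R^d)$, that constant is zero.
\end{itemize}
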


\begin{proof}
We employ Lemma~\ref{Lem:NHUDPP}. It states that for the determinantal 
point process $\KK$ and an even function 
$v\in L^1(\R^d)\cap L^2(\R^d)$ there holds
\be{2VarDPP-Kopie}
    \lim_{l\to \infty}\frac{1}{|\Lambda|}\Var \left[\Testfct_\Lambda\right]
    \,=\, 2\int_{\R^d} 
              \bigl(\Ghat(\xi)\bigr)^2\,\bigl|(E+\xi)\setminus E)\bigr|^2\dxi
    \,-\,2\int_{(\R^d)^2} 
              \Ghat(\xi_1)\Ghat(\xi_2)\,\theta(\xi_1,\xi_2)\rmd(\xi_1,\xi_2)
\ee
with
\[
   \theta(\xi_1,\xi_2)
   \,=\, \Bigl|\,\bigl((E+\xi_1)\setminus E\bigr)\cap
                 \bigl((E+\xi_2)\setminus(E+\xi_1+\xi_2)\bigr)\,\Bigr|\,.
\]
Take note that $\theta$ is a bounded continuous function which is symmetric in its 
two arguments. Furthermore, since
\[
   \int_{\R^d} M \cap (N+\xi)\dxi \,=\, |M|\,|N|
\]
for any two Lebesgue measurable sets $M,N\subset\R^d$, it follows that
\be{intM}
\begin{aligned}
   \int_{\R^d} \theta(\xi_1,\xi_2)\dxi_2
   &\,=\, \int_{\R^d} \Bigl|\,\bigl((E+\xi_1)\setminus E\bigr)\cap(E+\xi_2)\,
                      \Bigr|\dxi_2
          \,-\,
          \int_{\R^d} 
             \Bigl|\,\bigl(E+\xi_1)\setminus E)\bigr)
                     \cap(E+\xi_2)\cap(E+\xi_1+\xi_2)\,\Bigr|
          \dxi_2\\[1ex]
   &\,=\, \bigl|(E+\xi_1)\setminus E\bigr| 
          \,-\, \bigl|(E+\xi_1)\setminus E\bigr|\,
                \bigl| E\cap(E+\xi_1)\bigr|
    \,=\, \bigl|(E+\xi_1)\setminus E\bigr|^2\,,
\end{aligned}
\ee
because $|E|=1$.
Accordingly, the double integral ${\cal I}$ in \req{2VarDPP-Kopie}
can be bounded by using the Cauchy-Schwarz inequality and the symmetry of 
$\theta$, namely
\[
\begin{aligned}
   {\cal I} 
   &\,\leq\, \left(
                \int_{(\R^d)^2} 
                   \bigl(\Ghat(\xi_1)\bigr)^2 \,\theta(\xi_1,\xi_2) 
                   \rmd(\xi_1,\xi_2)
                \int_{(\R^d)^2} 
                   \bigl(\Ghat(\xi_2)\bigr)^2 \,\theta(\xi_1,\xi_2) 
                   \rmd(\xi_1,\xi_2)
             \right)^{1/2}
    =\, \int_{(\R^d)^2} 
             \bigl(\Ghat(\xi_1)\bigr)^2 \,\theta(\xi_1,\xi_2) \rmd(\xi_1,\xi_2)
    \\[1ex]
   &\,=\, \int_{\R^d} 
             \bigl(\Ghat(\xi_1)\bigr)^2 \int_{\R^d} \,\theta(\xi_1,\xi_2)\dxi_2
          \dxi_1
    \,=\, \int_{\R^d} 
             \bigl(\Ghat(\xi)\bigr)^2\,\bigl|(E+\xi)\setminus E\bigr|^2
          \dxi\,.
\end{aligned}
\]
Inserting this estimate into \req{2VarDPP-Kopie} we thus have
shown that
\[
   \lim_{\ell\to \infty}
   \frac{1}{|\Lambda|}\Var \left[\Testfct_\Lambda\right] \,>\, 0\,,
\]
unless there exist $a,b\in\R$ with $a^2+b^2>0$, such that
\be{ae-pointwise-identity}
   a\, \Ghat(\xi_1) \,\theta(\xi_1,\xi_2)^{1/2} 
   \,=\, b\, \Ghat(\xi_2) \,\theta(\xi_1,\xi_2)^{1/2} \qquad 
   \text{a.e. in $(\R^d)^2$\,.}
\ee
So let us assume that \req{ae-pointwise-identity} holds true. 
Due to the symmetry of $\theta$ we can assume without loss
of generality that $a\neq 0$, and since $\Ghat$ and $\theta$ 
are continuous functions, we conclude that
\be{pointwise-identity}
   \Ghat(\xi_1) \,\theta(\xi_1,\xi_2)^{1/2} 
   \,=\, \frac{b}{a}\, \Ghat(\xi_2) \,\theta(\xi_1,\xi_2)^{1/2} \qquad 
   \text{for all $\xi_1,\xi_2\in \R^d$\,.}
\ee
As shown in the proof of Proposition~\ref{Prop:DPPS}, the right-hand side
of \req{intM} is positive for every $\xi_1\neq 0$. This implies that
for every choice of $\xi_1\neq 0$ there is some 
$\xi_2=\xi_2(\xi_1)\in\R^d\setminus\{0\}$, such that 
$\theta(\xi_1,\xi_2)>0$. Moreover, by continuity there exists $\eps>0$,
such that
\[
   \theta\bigl(\xi_1',\xi_2(\xi_1)\bigr) \,>\, 0  \qquad
   \text{for \ $|\xi_1'-\xi_1|<\eps\,.$}
\]
It therefore follows from \req{pointwise-identity} that
\[
   \Ghat(\xi_1') \,=\, \frac{b}{a}\,\Ghat(\xi_2(\xi_1)) \qquad
   \text{for all $\xi_1'$ with $|\xi_1'-\xi_1|<\eps$}\,,
\]
and this implies that $\Ghat$ is constant in a neighborhood of $\xi_1$.
This being valid for all $\xi_1\neq 0$, $\Ghat$ must be constant
in all of $\R^d$, and since $\Ghat\in C_0$, we necessarily have
$\Ghat=0$. We thus have established \req{Thm:NHUDPP}.
\end{proof}

\subsection{Gibbs point processes with superstable pair interactions}
\label{Subsec:2ndorderGibbs}
For a Gibbs point process $\GG$ with a superstable pair potential we introduce,
similar to Section~\ref{Sec:Nonhyperuniformity}, 
two random variables which have the same expectation as $V_\Lambda$ of \req{G}.
To deduce the first one we rewrite
\[
   V_\Lambda(\gamma)
   \,=\, \sum_{x_2\in\gamma} G(x_2,\gamma) \qquad \text{with} \qquad
   G(x_2,\gamma) \,=\, 
   \sum_{x_1\in\gamma\setminus\{x_2\}}\chiL(\xx_2)v(x_1-x_2)\,,
\]
and utilize \req{GNZ}; this gives
\begin{align*}
   \E\bigl[V_\Lambda\bigr]
   &\,=\, \E\left[z\int_\Lambda\sum_{x_1\in(\gamma\cup\{x_2\})\setminus\{x_2\}} 
                     \chiL(x_1)v(x_1-x_2)e^{-\beta W(x_2\mid\gamma)}\dx_2\right]
                     \\[1ex]
   &\,=\, \E\left[z\int_\Lambda\sum_{x_1\in\gamma\setminus\{x_2\}} 
                     \chiL(x_1)v(x_1-x_2)e^{-\beta W(x_2\mid\gamma)}\dx_2
            \right].
\end{align*}
Concerning the sum in the final expression it does not matter whether $x_2$
is eliminated from $\gamma$ or not, because when $x_2\in\gamma$ then the
interaction term $e^{-\beta W(x_2\mid\gamma)}$ vanishes anyway. Accordingly,
the expected values of $V_\Lambda$ and 
\be{2ndorderintegral2}
   V'_\Lambda(\gamma) \,=\, 
   \sum_{x_1  \in \gamma }\chiL(x_1)\,z 
   \int_\Lambda v(x_1-x_2) e^{-\beta W(x_2 \mid \gamma)} \dx_2
\ee
are the same. On the other hand, we can utilize \req{MGNZ}, which gives
\[
   \E\bigl[V_\Lambda\bigr] \,=\, \E\bigl[V_\Lambda''\bigr]
\]
for
\be{2ndorderintegral3}
   V''_\Lambda(\gamma) \,=\, 
    \int_{\Lambda^2} 
       v(x_1-x_2)\,z^2 e^{-\beta H(\xx_2) - \beta W(\xx_2 \mid \gamma) } 
    \dxx_2\,.
\ee

\begin{lemma}
\label{Lem:Vtilde}
Let $\GG$ be a $(\beta,z,u)$-Gibbs point process with a superstable 
pair potential $u$. Further, for $\eps\geq 0$ and 
$V_\Lambda$, $V_\Lambda'$, and $V_\Lambda''$ defined as above, let
\be{Veps}
   V^\eps_\Lambda 
   \,=\, (1-\eps) V_\Lambda \,+\, 2\eps V_\Lambda' \,-\, \eps V_\Lambda''\,.
\ee
Then there holds
\be{VarVeps}
   \Var\bigl[V^\eps_\Lambda\bigr] 
   \,=\, \Var\bigl[V_\Lambda\bigr] 
         \,-\, 4\eps \int_{\Lambda^2} v(x_1-x_2)^2\rho_2(x_1-x_2)\dxx_2
         \,+\, \eps^2\mathcal{R}_v
\ee
where
\be{Rvbound}
   \frac{\mathcal{R}_v}{|\Lambda|} \,\leq\, C\,\norm{v}_{L^2}^2 
\ee
for some constant $C>0$, which only depends on $u$, $z$, and $\beta$.
\end{lemma}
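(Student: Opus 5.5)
The plan is to write $V^\eps_\Lambda = V_\Lambda + \eps D_\Lambda$ with
\[
   D_\Lambda \,=\, 2V'_\Lambda - V''_\Lambda - V_\Lambda\,.
\]
By construction $\E[V'_\Lambda]=\E[V''_\Lambda]=\E[V_\Lambda]$, hence $\E[D_\Lambda]=0$ and
\[
   \Var\bigl[V^\eps_\Lambda\bigr] \,=\, \Var\bigl[V_\Lambda\bigr] \,+\, 2\eps\,\E\bigl[V_\Lambda D_\Lambda\bigr] \,+\, \eps^2\,\E\bigl[D_\Lambda^2\bigr]\,.
\]
So \req{VarVeps} holds with $\mathcal{R}_v=\E[D_\Lambda^2]$, provided we show two things: the identity $\E[V_\Lambda D_\Lambda] = -2\int_{\Lambda^2} v(x_1-x_2)^2\rho_2(x_1-x_2)\dxx_2$, and the bound \req{Rvbound} for $\E[D_\Lambda^2]$.

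For the first identity I would use \req{GNZ} and \req{MGNZ} \emph{backwards}, turning the integrals against $z e^{-\beta W(\cdot\mid\gamma)}$ in $V'_\Lambda$ and $V''_\Lambda$ into sums over points of $\gamma$ with those points removed from the other factor. This gives
\[
   \E[V_\Lambda V'_\Lambda] \,=\, \E\Bigl[\sum_{x_1\neq x_2\in\gamma}\chiL(x_1)\chiL(x_2)v(x_1-x_2)\,V_\Lambda(\gamma\setminus\{x_2\})\Bigr]\,,
\]
and similarly for $\E[V_\Lambda V''_\Lambda]$ with $V_\Lambda(\gamma\setminus\{x_1,x_2\})$ in place of $V_\Lambda(\gamma\setminus\{x_2\})$. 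Next I expand the removal corrections:
\[
   V_\Lambda(\gamma\setminus\{x_2\}) \,=\, V_\Lambda(\gamma) - 2\sum_{y\in\gamma\setminus\{x_2\}}\chiL(y)\chiL(x_2)v(y-x_2)\,,
\]
and in $V_\Lambda(\gamma\setminus\{x_1,x_2\})$ one subtracts two such sums and adds back $2v(x_1-x_2)$. With $T=\E\bigl[\sum_{x}\chiL(x)\bigl(\sum_{y\neq x}\chiL(y)v(x-y)\bigr)^2\bigr]$ one gets
\[
   \E[V_\Lambda V'_\Lambda]=\E[V_\Lambda^2]-2T\,,\qquad
   \E[V_\Lambda V''_\Lambda]=\E[V_\Lambda^2]-4T+2\int_{\Lambda^2}v^2\rho_2\,.
\]
The combination $2\E[V_\Lambda V'_\Lambda]-\E[V_\Lambda V''_\Lambda]-\E[V_\Lambda^2]$ cancels $T$ and leaves exactly $-2\int_{\Lambda^2}v^2\rho_2$. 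This explains the choice of coefficients in \req{Veps}.

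For the remainder I compute $\E[V'^2_\Lambda]$, $\E[V'_\Lambda V''_\Lambda]$, $\E[V''^2_\Lambda]$ (together with the terms already known) in the same way as $\E[(\PhiG')^2]$ in the proof of Theorem~\ref{Thm:structureGibbs}. The products $e^{-\beta W(\xx\mid\gamma)}e^{-\beta W(\yy\mid\gamma)}$ are written as $e^{-\beta W(\xx\cup\yy\mid\gamma)}$. Every missing mutual Boltzmann factor $e^{-\beta u(x_i-y_j)}$ is replaced by $1+f(x_i-y_j)$, the terms without any $f$ are reassembled via \req{rho-n} into correlation-function integrals, and the few remaining terms come from the diagonal parts of the sums. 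The claim to verify is that in $\E[D_\Lambda^2]$ all $f$-free contributions cancel, again because of the coefficients $(-1,2,-1)$, exactly as the $\rho_2$-terms cancelled in Theorem~\ref{Thm:structureGibbs}. What remains is a finite sum of integrals of the form
\[
   z^k\int v(x_1-x_2)\,v(x_3-x_4)\,\prod f(\cdot)\;\E\bigl[e^{-\beta W(\cdot\mid\gamma)}\bigr]\,,
\]
with some points possibly identified, and in every such term at least one $f$-factor links a variable of the first $v$ to a variable of the second.

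The expectations $\E[e^{-\beta W(\xx_n\mid\gamma)}]$ for $n\le 4$ I would bound uniformly by the argument of Lemma~\ref{Lem2.1}, i.e.\ the stability bound \req{B} together with the Ruelle bound. Then every term is estimated by fixing one variable in $\Lambda$, which produces the factor $|\Lambda|$, and using translation invariance. The $f$ linking the two $v$-factors turns the resulting integral into $\int|v(a)|\,|v(b)|\,|f(a-b+c)|\cdots$, and Young's inequality yields $\norm{f}_{L^1}\norm{f}_{L^\infty}^{m}\norm{v}_{L^2}^2$. This is the same mechanism as in \req{Rxibound}. Together this gives \req{Rvbound} with $C$ depending only on $u$, $z$, $\beta$.

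The main obstacle is the bookkeeping in this last step. One has to check that no surviving term has the two $v$-factors connected only through $\chiL$; such a term would only be bounded by $\norm{v}_{L^1}^2|\Lambda|$, or could even be of order $|\Lambda|^2$. I would organize the expansion by the number of points that $V'_\Lambda$ and $V''_\Lambda$ ``integrate in'' (one or two), and verify the cancellation of $f$-free terms separately in each class.
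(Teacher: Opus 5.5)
Your handling of the $\eps$-linear term is correct and differs from the paper's route. Writing $V^\eps_\Lambda=V_\Lambda+\eps D_\Lambda$ identifies $\mathcal{R}_v=\Var[D_\Lambda]$, which coincides with the paper's $2\int_{\Lambda^2}v^2\rho_2-4\mathcal{R}_1-4\mathcal{R}_2+4\mathcal{R}_3-\mathcal{R}_4$. Using \req{GNZ} and \req{MGNZ} backwards with the point-removal identities gives $\E[V_\Lambda D_\Lambda]=-2\int_{\Lambda^2}v(x_1-x_2)^2\rho_2(x_1-x_2)\dxx_2$ without going through $\rho^{(3)},\rho^{(4)}$ as in Lemma~\ref{Lem:mixedexpectations1}. (Minor point: not every $f$-free term of $\E[D_\Lambda^2]$ cancels, since $2\int_{\Lambda^2}v^2\rho_2$ survives, but that term is harmless.) The gap is in the bound for $\E[D_\Lambda^2]$.

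Having one $f$-factor link the two $v$-factors does not give $\norm{v}_{L^2}^2$. Take a ``tree'' term such as
\[
   z^4\int_{\Lambda^4} v(x_1-x_2)\,v(x_3-x_4)\,e^{-\beta u_{12}-\beta u_{34}}\,f_{24}\,
   \E\bigl[e^{-\beta W(\xx_4\mid\gamma)}\bigr]\dxx_4\,.
\]
After fixing $x_4$, the differences $x_1-x_2$, $x_2-x_4$, $x_3-x_4$ are independent integration variables. The integral therefore only yields $C\norm{f}_{L^1}\norm{v}_{L^1}^2|\Lambda|$, and Young's inequality cannot turn this into $\norm{v}_{L^2}^2$. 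Moreover, $\norm{v}_{L^1}^2\le C\norm{v}_{L^2}^2$ fails (take $v=1_{B_R(0)}$, $R\to\infty$), whereas the constant in \req{Rvbound} must be independent of $v$, as Theorem~\ref{Thm1} requires. ``Star'' terms such as $v_{12}v_{34}f_{13}f_{23}$ have the same defect, because the pendant $v$-edge contributes a factor $\norm{v}_{L^1}$. An $L^2$ bound is available when both $v$-edges lie on a common cycle of $v$- and $f$-edges. Examples are the triangle in $\mathcal{R}_1$ (Cauchy--Schwarz in the shared variable) and the four-cycle $v_{12}f_{23}v_{34}f_{14}$, for which $\bigl\||f|*|v|\bigr\|_{L^2}^2\le\norm{f}_{L^1}^2\norm{v}_{L^2}^2$.

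Such terms do occur. $\mathcal{R}_2$ contains $v(x_3-x_1)v(x_4-x_2)f_{12}$, and $\mathcal{R}_3$, $\mathcal{R}_4$ contain $e^{-\beta u_{13}-\beta u_{23}}-1=f_{13}+f_{23}+f_{13}f_{23}$ and its four-factor analogue. The coefficients $(-1,2,-1)$ do not remove them term by term: in the paper's function $\varphi$ the single-$f$ terms carry coefficients $3,3,-5,-1$, and the star-type products likewise have nonzero individual coefficients. They cancel only after the integrand of $4\mathcal{R}_3-4\mathcal{R}_2-\mathcal{R}_4$ is symmetrized under $x_1\leftrightarrow x_2$, $x_3\leftrightarrow x_4$ and $(x_1,x_2)\leftrightarrow(x_3,x_4)$. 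What remains is $f_{14}f_{23}(2-f_{13}f_{24})$, i.e.\ only four-cycle configurations. This symmetrization step is the core of the paper's proof and is missing from your plan. The obstacle you flagged, terms linked only through $\chiL$, is not the decisive one.
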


We postpone the proof of Lemma~\ref{Lem:Vtilde} to 
Section~\ref{subsec:Gibbs-technical}. 

\begin{theorem}
\label{Thm1}
For every $(\beta,z,u)$-Gibbs point process $\GG$ with a superstable pair potential $u$ 
there exists a constant $c>0$, such that $V_\Lambda$ of \req{G} satisfies
\be{Thm1}
   \liminf_{\ell\to\infty}\frac{\Var\bigl[V_\Lambda\bigr]}{|\Lambda|}
   \,\geq\,
   \frac{c}{\norm{v}_{L^2}^2} \left(\int_{\R^d} v(x)^2\rho_2(x)\dx\right)^2  
\ee
for every even function $v\in L^1(\R^d)\cap L^2(\R^d)\setminus\{0\}$. 
\end{theorem}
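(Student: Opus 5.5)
The plan is to use Lemma~\ref{Lem:Vtilde} in the same way the mixing parameter was used in the proof of Theorem~\ref{Thm:structureGibbs}. Since $\Var[V^\eps_\Lambda]\geq 0$ for every $\eps\geq 0$, identity \req{VarVeps} gives
\[
   \Var\bigl[V_\Lambda\bigr] \,\geq\, 4\eps \int_{\Lambda^2} v(x_1-x_2)^2\rho_2(x_1-x_2)\dxx_2 \,-\, \eps^2\mathcal{R}_v\,.
\]
Dividing by $|\Lambda|$ and using \req{Rvbound}, we obtain
\[
   \frac{\Var[V_\Lambda]}{|\Lambda|} \,\geq\, 4\eps\,\frac{1}{|\Lambda|}\int_{\Lambda^2} v(x_1-x_2)^2\rho_2(x_1-x_2)\dxx_2 \,-\, C\eps^2\norm{v}_{L^2}^2\,.
\]
Here $C$ depends only on $u$, $z$ and $\beta$; in particular it does not depend on $v$ or $\ell$.

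Next we pass to the limit in the first term. As in the proof of Proposition~\ref{prop:chi4}, translation invariance rewrites
\[
   \frac{1}{|\Lambda|}\int_{\Lambda^2} v(x_1-x_2)^2\rho_2(x_1-x_2)\dxx_2 \,=\, \int_{\R^d}k^{(1)}_\Lambda(x)\,v(x)^2\rho_2(x)\dx\,.
\]
Here $0\le k^{(1)}_\Lambda\le 1$ and $k^{(1)}_\Lambda\to1$ pointwise. Since $\rho_2\le q^2$ by the Ruelle bound, $v^2\rho_2\in L^1$, so dominated convergence shows that this term converges to $A:=\int v^2\rho_2\dx$. Note that $A\geq 0$ because $\rho_2\ge0$. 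Taking $\liminf_{\ell\to\infty}$ at fixed $\eps$ then gives
\[
   \liminf_{\ell\to\infty}\frac{\Var[V_\Lambda]}{|\Lambda|}\,\geq\, 4\eps A - C\eps^2\norm{v}_{L^2}^2\qquad\text{for every }\eps\ge0\,.
\]

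Finally we optimize over $\eps$. The choice $\eps = 2A/(C\norm{v}_{L^2}^2)$ is nonnegative and admissible since $v\neq0$. It yields the lower bound $4A^2/(C\norm{v}_{L^2}^2)$, which is \req{Thm1} with $c=4/C$. Because $C$ is independent of $v$, so is $c$, as the theorem requires.

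Since Lemma~\ref{Lem:Vtilde} may be assumed, no step here is a real obstacle. All of the difficulty sits in that lemma, specifically in the bound \req{Rvbound} on $\mathcal{R}_v$, which is uniform in $\Lambda$ and depends on $v$ only through $\norm{v}_{L^2}^2$. The only care needed is to keep $\eps$ fixed while taking the $\liminf$ and to optimize afterwards, and to check that finiteness of the variance is supplied by the lemma's setting (the Ruelle bound).
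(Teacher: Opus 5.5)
Your proposal is correct and follows the paper's proof: both discard $\Var[V^\eps_\Lambda]\geq 0$ in Lemma~\ref{Lem:Vtilde}, bound $\mathcal{R}_v/|\Lambda|$ by $C\norm{v}_{L^2}^2$, optimize the resulting quadratic in $\eps$, and obtain $c=4/C$. The only difference is that the paper optimizes $\eps$ for each fixed $\ell$ and then lets $\ell\to\infty$, whereas you take the $\liminf$ at fixed $\eps$ and then optimize. The two orderings are equivalent here.
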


\begin{proof}
Let $v\in L^1(\R^d)\cap L^2(\R^d)\setminus\{0\}$ be an even function, and define
$V_\Lambda$ and $V^\eps_\Lambda$ as in \req{G} and \req{Veps}, respectively. 
Then it follows from Lemma~\ref{Lem:Vtilde} that, for $\eps\geq 0$,
\begin{align*}
   \frac{1}{|\Lambda|}\,\Var\bigl[V_\Lambda\bigr] 
   &\,=\, \frac{1}{|\Lambda|}\,\Var\bigl[V^\eps_\Lambda\bigr] 
          \,+\, 4\eps\,\frac{1}{|\Lambda|}
                \int_{\Lambda^2} v(x_1-x_2)^2\rho_2(x_1-x_2)\dxx_2
          \,-\, \eps^2\,\frac{\mathcal{R}_v}{|\Lambda|} \\[1ex]
   &\,\geq\, 4\eps\,\frac{1}{|\Lambda|}
                    \int_{\Lambda^2} v(x_1-x_2)^2\rho_2(x_1-x_2)\dxx_2
             \,-\, \eps^2\,C \norm{v}_{L^2}^2 
\end{align*}
due to the nonnegativity of the variance. The right-hand side becomes
maximal for
\[
   \eps \,=\, \frac{2}{C \norm{v}_{L^2}^2}\,
     \frac{1}{|\Lambda|}\int_{\Lambda^2} v(x_1-x_2)^2\rho_2(x_1-x_2)\dxx_2\,,
\]
which gives
\[
   \frac{1}{|\Lambda|}\,\Var\bigl[V_\Lambda\bigr] 
   \,\geq\, \frac{4}{C\norm{v}_{L^2}^2}
            \left(\frac{1}{|\Lambda|}
                     \int_{\Lambda^2} v(x_1-x_2)^2\rho_2(x_1-x_2)\dxx_2
            \right)^2.
\]
Since $v^2\rho_2$ belongs to $L^1(\R^d)$,
the assertion~\req{Thm1} now follows with $c=4/C$ by taking 
$\ell\to\infty$.
\end{proof}

\begin{remark}
\label{Rem:hardcore}
\rm 
Note that the lower bound in \req{Thm1} is not necessarily positive. 
For example, if the pair potential $u$ takes the value $+\infty$ on 
$B_{r_{\rm hc}}(0)$ for some hard core radius $r_{\rm hc}>0$ then 
$\rho_2$ vanishes on $B_{r_{\rm hc}}(0)$ by virtue of \req{rho-n}, 
and hence, the right-hand side of \req{Thm1} is equal to zero 
for all $v$ supported on $B_{r_{\rm hc}}(0)$. 
In fact, in this situation $\rho^{(n)}(\xx_n)= 0$,
whenever two entries $x_i$ and $x_j$ of $\xx_n$ get as close as 
$r_{\rm hc}$; by virtue of \req{variance2ndord} this implies
that $\E(V_\Lambda^2)= 0$, too,
i.e., that the random variable $V_\Lambda$ is zero almost surely.
\fin
\end{remark}

\subsection{Conclusion}
In summary we have seen that for hyperuniform systems it is not clear
a priori whether the system is hyperuniform for second order statistics:  
For the stationary lattice the second order statistics are hyperuniform,
for the determinantal point processes $\KK$ of \req{rhodpp} they are not.

Some of these determinantal point processes are known to be 
\emph{number rigid}, cf.~\cite{Ghosh21}, 
which means that the number of points of a configuration $\gamma$ in $\Lambda$ 
can be inferred from observing $\gamma \cap (\R^d\setminus\Lambda)$. 
Since the second order statistics fail to be hyperuniform in these cases, 
nothing more can be deduced about $\gamma\cap\Lambda$; 
compare, e.g., Dereudre et al~\cite{Dereudre20}.
In contrast, for the stationary lattice the observation of 
$\gamma \cap (\R^d\setminus\Lambda)$ completely determines $\gamma\cap\Lambda$; 
this property is known as \emph{maximal rigidity} 
(cf.~Ghosh and Lebowitz~\cite{Ghosh18}).

Further, our results indicate that for general non-hyperuniform point processes 
the variance of second order statistics will be positive generically.






\section{The inverse Henderson problem}
\label{Sec:Jacobian}
As mentioned in the introduction the inverse Henderson problem is concerned
with the reconstruction of the pair potential $u$ of a $(\beta,z,u)$-Gibbs
measure $\GG$ from the corresponding radial distribution function, i.e., from $\rho_2/\rho^2$.
In computational physics the so-called \emph{inverse Monte-Carlo method}
is often employed for this purpose. This method, originally developed by
Lyubartsev and Laaksonen~\cite{LyLa95}, is an instance of the Newton iteration 
to solve the operator equation
\be{Henderson}
   F[u] \,=\, \rho_2\,,
\ee
where $F=F_{\beta,\rho}$ maps the pair potential $u$
onto the associated pair correlation function under given conditions
on temperature and density, i.e., for fixed values of $\beta$ and $\rho$.
An alternative way of interpreting this method is via the 
minimization of a (strictly convex) relative entropy type functional 
${\cal E}={\cal E}_{\beta,\rho,\rho_2}[u]$, 
of which $\nabla_u{\cal E}=(\rho_2-F[u])/2$ is the associated gradient;
see Murtola, Karttunen, and Vattulainen~\cite{MKV09} or \cite{FrHa22}.

For a given superstable pair potential $u_0$ denote by $\psi_0$ the
majorant of \req{superstable}. Without loss of generality let us
assume that $\psi_0$ is bounded in $[0,\infty)$, and 
define the Banach space $\V$ as the 
set of all even functions $v:\R^d\to\R$, for which the associated norm
\[
   \norm{v}_\V \,=\, 
   \esssup_{x\in \R^d}\,\frac{|v(x)|}{\psi_0(|x|)}
\]
is finite. This definition implies that $u_0+v$ is again a
superstable pair potential for every $v\in\V$ with sufficiently 
small norm $\norm{v}_\V$, hence we consider $\V$ to be the space of 
admissible \emph{perturbations} of $u_0$. Also take note that 
$\V\subset L^1(\R^d)\cap L^2(\R^d)$.
It can be shown, cf.~\cite{FrHa22}, 
that within the gas phase, i.e. for a given density $\rho$ which corresponds 
to an activity $z$ satisfying \req{z0},
there exists a bounded linear operator
$F'[u_0] \in \L(\V,L^\infty(\R^d))\cap\L(\V,L^1(\R^d))$ with 
\[
   \frac{\norm{F[u_0+v]-F[u_0]-F'[u_0]v}_\Y}{\norm{v}_\V} \,\to\, 0 \qquad
   \text{as $\norm{v}_\V\!\to 0$}\,,
\]
where $\Y$ may stand for $L^1(\R^d)$ and/or $L^\infty(\R^d)$.
In other words, the operator $F$ is differentiable at $u=u_0$,
and $-F'[u]$ is the Hessian of the 
relative entropy functional ${\cal E}$.
Accordingly, the inverse Monte-Carlo method amounts to solving
\[
   F'[u_0]v \,=\, \rho_2 - F[u_0]
\]
for $v\in\V$,
and to take $u=u_0+v$ as an improved approximation of the true pair potential.

It has been argued in \cite{LyLa95} that the Jacobian $F'[u_0]$ is connected 
to the covariance of the observable $V_\Lambda$ of \req{G}, when doing
Monte-Carlo simulations in the canonical ensemble for a finite volume. 
This link allows to assemble the Jacobian $F'[u_0]$ on the fly, 
when evaluating $F[u_0]$ numerically.
As pointed out in \cite[Remark~6.1]{FrHa22} this connection has to be
augmented in the context of Gibbs point processes by an additional term
which takes into account that the density is being prescribed in the inverse
Henderson problem. The corresponding modification is as follows.

\begin{proposition}
\label{Prop:Henderson}
Let $\GG$ be a $(\beta,z,u_0)$-Gibbs point process associated with a 
superstable pair potential $u_0$ and an activity $z$ in the gas phase and
corresponding density $\rho$.
Then the derivative $F'[u_0]$ of the operator $F$ of \req{Henderson} satisfies
\[
   \int_{\R^d} v(x)\bigl(F'[u_0]v\bigr)(x)\dx
   \,=\, -\frac{\beta}{2}\,
         \lim_{\ell\to\infty} 
            \frac{\Var\bigl[V_\Lambda\bigr]\Var\bigl[N_\Lambda\bigr]
                  \,-\, \Cov\bigl[V_\Lambda,N_\Lambda\bigr]^2}
                 {|\Lambda|^2}
         \ \Big/ \ 
         \lim_{\ell\to\infty} \frac{\Var\bigl[N_\Lambda\bigr]}{|\Lambda|},
\]
where $N_\Lambda$ is as in \req{N} and $V_\Lambda$ is defined by \req{G} 
for the given $v\in\V$. 
\end{proposition}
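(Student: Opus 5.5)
We will compute $F'[u_0]v$ by differentiating the correlation functions of the Gibbs process with respect to the potential at fixed $\beta$ and fixed density $\rho$. Then we identify the result with the limits of variances and covariances by means of Proposition~\ref{prop:chi4} and its first-order analogue \req{S0gleich0}. The key observation is that at fixed activity $z$ the perturbation $u_0\mapsto u_0+tv$ changes the Gibbs measure (formally, in finite volume) by the Radon--Nikodym factor $e^{-\beta t V_\Lambda/2}$ up to normalization, since $H$ contains the factor $1/2$. The density constraint is then enforced by simultaneously varying $\log z$, which multiplies by $e^{s N_\Lambda}$.

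\emph{Step 1: the two partial derivatives.} In the gas phase \req{z0} the cluster expansion gives analytic dependence of $\rho_2$ on $(z,u)$. I would therefore first establish, using finite-volume Gibbs measures in $\Lambda$ and passing to $\ell\to\infty$, the two formulas
\[
\frac{\partial}{\partial t}\int v\rho_2 \,=\, -\frac{\beta}{2}\lim\frac{\Var[V_\Lambda]}{|\Lambda|}\,,\qquad
z\frac{\partial}{\partial z}\int v\rho_2 \,=\, \lim\frac{\Cov[V_\Lambda,N_\Lambda]}{|\Lambda|}\,,
\]
together with $\partial_t\rho=-\frac{\beta}{2}\lim \Cov[V_\Lambda,N_\Lambda]/|\Lambda|$ and $z\partial_z\rho=\lim \Var[N_\Lambda]/|\Lambda|$. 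These rely on $\int v\rho_2=\lim\E[V_\Lambda]/|\Lambda|$ from \req{EVlim}, and on the fact that the finite-volume derivative of an expectation is a covariance with the log-density derivative. To justify the interchange of derivative and thermodynamic limit, I would use the decay of truncated correlations in the gas phase (the hypothesis \req{chi4-bound} and $\omega\in L^1$ from Remark~\ref{Rem:gasphase}). An alternative is to express both sides explicitly as integrals of truncated correlation functions, $\chi^{(4)}$, $\rho^{(3)}$ and $\omega$, as in \req{Hank17b}, and to invoke the known formulas for the functional derivatives of $\rho_2$ with respect to $u$ and $z$ from \cite{FrHa22}.

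\emph{Step 2: implicit function argument.} Because $\rho$ is prescribed, $z=z(t)$ must satisfy $\partial_t\rho+ (\partial_z\rho) z'(t)=0$. Here $z\partial_z\rho=\rho S(0)\cdot$ by \req{S0gleich0}, and this is positive by Theorem~\ref{Thm:structureGibbs}, so the implicit function theorem applies. Consequently
\[
\int v\,F'[u_0]v \,=\, \partial_t\!\int v\rho_2 \,-\, \frac{\partial_t\rho}{z\partial_z\rho}\, z\partial_z\!\int v\rho_2
\,=\, -\frac{\beta}{2}\,\frac{\lim\frac{\Var V}{|\Lambda|}\lim\frac{\Var N}{|\Lambda|}-\bigl(\lim\frac{\Cov}{|\Lambda|}\bigr)^2}{\lim\frac{\Var N}{|\Lambda|}}\,.
\]
This is exactly the claimed formula, since the limits of the products equal the products of the limits.

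\emph{Main obstacle.} The hardest part is the rigorous version of Step 1, namely showing that the Fréchet derivative from \cite{FrHa22}, paired with $v$, coincides with the thermodynamic limit of the finite-volume covariance expressions. This requires controlling boundary effects uniformly in the perturbation. I would try first to compare term by term the Mayer-series representation of $F'[u_0]v$ from \cite{FrHa22} with the expression obtained from \req{Hank17b} plus the analogous formulas for $\Cov[V_\Lambda,N_\Lambda]/|\Lambda|$. The latter involve $\rho^{(3)}-\rho\rho_2$ and $\omega$, and converge by the same dominated-convergence argument used in the proof of Proposition~\ref{prop:chi4}.
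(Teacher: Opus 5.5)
Your outline is sound, but its main line differs from the paper's proof. The paper differentiates nothing and never uses finite-volume measures. It quotes from \cite{FrHa22} the fixed-density formula \req{Prop:Henderson-1} and observes that its first three terms are $-\frac{\beta}{2}$ times the right-hand side of \req{Hank17b}. The only new computation is then $\lim_{\ell\to\infty}\Cov[V_\Lambda,N_\Lambda]/|\Lambda| = 2\int v\rho_2\,\rmd x+\int v(x)\int\chi^{(3)}(0,x+x',x')\,\rmd x'\,\rmd x = -\frac{2}{\beta}\int v\,\nabla_u\rho\,\rmd x$, obtained from \req{VN} with the same $k_\Lambda^{(1)},k_\Lambda^{(2)}$ dominated-convergence device as in Proposition~\ref{prop:chi4}, and then \req{nablaurho}. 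Writing $C$ for this limit and using \req{S0gleich0}, the last term of \req{Prop:Henderson-1} becomes $\frac{\beta}{2}C^2/(\rho S(0))$. The claim then follows by pure term-matching under the infinite-volume process $\GG$.

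You instead rebuild the structure of \req{Prop:Henderson-1} from fluctuation--response identities at fixed $z$ and an implicit-function elimination of $z(t)$. Since $\partial_t\rho=\int v\,\nabla_u\rho=-\frac{\beta}{2}C$, your correction term is exactly the paper's $\frac{2}{\rho\beta S(0)}(\int\nabla_u\rho\, v)^2$. Your route explains conceptually where the Schur-complement form $\Var[V_\Lambda]\Var[N_\Lambda]-\Cov[V_\Lambda,N_\Lambda]^2$ comes from: fixing the density removes the part of $V_\Lambda$ that is linearly explained by $N_\Lambda$. It also makes the symmetry $z\partial_z\int v\rho_2=-\frac{2}{\beta}\partial_t\rho$ automatic.

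The price is the two steps you flag and the paper sidesteps. First, you must identify the limits of free-boundary finite-volume variances and covariances with those under $\GG$. Second, you must interchange $\partial_t,\partial_z$ with $\ell\to\infty$ uniformly near $t=0$. Both need uniform gas-phase cluster-expansion bounds, i.e.\ redoing part of \cite{FrHa22}; \req{chi4-bound} together with $\omega\in L^1$ alone would not supply them.

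Your fallback is essentially the paper's proof, except that you would combine separate $u$- and $z$-derivatives yourself instead of quoting the fixed-$\rho$ formula. A small slip: the covariance limit involves $\rho_2$ and $\chi^{(3)}=\rho^{(3)}-\rho\rho_2$, not $\omega$.
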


\begin{proof}
Let $v\in\V$ be given. 
Then we quote from the proof of Lemma~7.2 and Lemma~A.2 of \cite{FrHa22} that
\be{Prop:Henderson-1}
\begin{aligned}
   \int_{\R^d} v(x)\bigl(F'[u_0]v\bigr)(x)\dx
   &\,=\, -\beta \int_{\R^d}v^2(x)\rho_2(x)\dx
          \,-\, 2\beta\int_{(\R^d)^2}v(x_1)\rho^{(3)}(0,x_1,x_2)v(x_2)\dxx_2\\[1ex]
   &\phantom{\,=\ }
    \,-\,\frac{\beta}{2}\int_{(\R^d)^3} v(x_1)v(x_2)\chi^{(4)}(x_1,0,x_3,x_2+x_3)\dxx_3
    \,+\, \frac{2}{\rho\beta S(0)} 
          \left(\int_{\R^d} (\nabla_u\rho)(x) v(x)\dx\right)^2,
\end{aligned}
\ee
where $\chi^{(4)}$ is given in \req{chi4} and satisfies the 
assumption~\req{chi4-bound} of Proposition \ref{prop:chi4} for $z$ in the 
gas phase, and $\beta S(0)$ is the compressibility of the system,
cf.~\req{compressibility} and \req{S0gleich0}.
Furthermore, $\nabla_u\rho\in L^\infty(\R^d)$ denotes the gradient of $\rho$
with respect to $u$ (\cite{Hank17b,FrHa22}), given by
\be{nablaurho}
   (\nabla_u\rho) (x) 
   \,=\, -\beta \rho_2(x) 
         \,-\, \frac{\beta}{2}\int_{\R^d} \chi^{(3)}(0,x+x',x')\dx'\,,
\ee
where we have made use of the function
\be{chi3}
   \chi^{(3)}(\xx_3) \,=\, \rho^{(3)}(\xx_3) \,-\, \rho\,\rho_2(x_2-x_3)
\ee
employed in \cite{FrHa22} and \cite{Hank17b}.

On the other hand, we conclude from \req{N} and \req{G} that
\begin{align*}
   \E\bigl[V_\Lambda N_\Lambda\bigr]
   &\,=\, \E\Bigl[
               \sum_{x_1\in\gamma}\sum_{x_2\neq x_3\in\gamma}
                  \chiL(\xx_3)v(x_2-x_3)
            \Bigr] \\[1ex]
   &\,=\, \E\Bigl[
                \sum_{x_1\neq x_2\neq x_3\in\gamma} 
                  \chiL(\xx_3)v(x_2-x_3)
            \Bigr]
          \,+\, 2\,\E\Bigl[\sum_{x_1\neq x_2\in\gamma} \chiL(\xx_2)v(x_1-x_2)
                     \Bigr]\,,
\end{align*}
so that
\be{VN}
   \E\bigl[V_\Lambda N_\Lambda\bigr]
   \,=\, \int_{\Lambda^3} v(x_2-x_3)\rho^{(3)}(\xx_3)\dxx_3
         \,+\, 2\int_{\Lambda^2} v(x_1-x_2)\rho_2(x_1-x_2)\dxx_2
\ee
by virtue of \req{useofrho}. Using \req{EV} this yields
\begin{align*}
   \Cov\bigl[V_\Lambda,N_\Lambda\bigr]
   &\,=\, \int_{\Lambda^3} v(x_2-x_3)\rho^{(3)}(\xx_3)\dxx_3
          \,+\, 2\!\int_{\Lambda^2} v(x_1-x_2)\rho_2(x_1-x_{2})\dxx_2 
          \\
    &\phantom{\,=}\ \,-\, \rho\,|\Lambda|
                \int_{\Lambda^2} v(x_2-x_3)\rho_2(x_2-x_3)\rmd(x_2,x_3) \\[1ex]
   &\,=\, 2 \int_{\Lambda^2} v(x_1-x_2)\rho_2(x_1-x_{2})\dxx_2 
          \,+\, \int_{\Lambda^3}\! 
                  v(x_2-x_3)\,\chi^{(3)}(\xx_3)
               \dxx_3
\end{align*}
with $\chi^{(3)}$ of \req{chi3}. Therefore
\begin{align*}
   \frac{1}{|\Lambda|}\,\Cov\bigl[V_\Lambda,N_\Lambda\bigr]
   &\,=\, 2\int_{\R^d} k_\Lambda^{(1)}(x)\,v(x)\rho_2(x)\dx
          \,+\, \int_{(\R^d)^2} k_\Lambda^{(2)}(x_2+x_3,x_3)\,
                  v(x_2)\,\chi^{(3)}(0,x_2+x_3,x_3)
                \rmd(x_2,x_3)
\end{align*}
with $k_\Lambda^{(1)}$ of \req{k0} and $k_\Lambda^{(2)}$ of \req{k2}, 
and it follows from the dominated convergence theorem that
\begin{align*}
   \lim_{\ell\to\infty} 
      \frac{1}{|\Lambda|}\,\Cov\bigl[V_\Lambda,N_\Lambda\bigr]
   &\,=\, 2\int_{\R^d} v(x)\rho_2(x)\dx
          \,+\, \int_{\R^d} v(x)\int_{\R^d}\chi^{(3)}(0,x+x',x')\dx'\dx
    \,=\,-\frac{2}{\beta}\,\int_{\R^d} v(x)\nabla_u\rho(x)\dx\,,
\end{align*}
cf.~\req{nablaurho}. 
Together with \req{Prop:Henderson-1}, \req{Hank17b}, and \req{S0gleich0}
this establishes the assertion.
\end{proof}

We are now going to use the technique from Section~\ref{Subsec:2ndorderGibbs}
to investigate the injectivity of $F'[u_0]$, i.e., the strict convexity of the
quadratic Taylor approximation of the relative entropy functional ${\cal E}$
at $u_0$. We start with the following 
technical lemma, whose proof is postponed to 
Section~\ref{subsec:Gibbs-technical}.

\begin{lemma}
\label{Lem:VtildeN}
Let $\GG$ be a $(\beta,z,u)$-Gibbs point process with a superstable pair potential $u$
and $\eps>0$. Then the covariances of the random variables $V_\Lambda$, 
$V^\eps_\Lambda$, and $N_\Lambda$ of \req{G}, \req{Veps}, and \req{N}, 
respectively, satisfy
\be{CovVepsN}
   \Cov\bigl[V^\eps_\Lambda,N_\Lambda\bigr] 
   \,=\, \Cov\bigl[V_\Lambda,N_\Lambda\bigr]\,. 
\ee
\end{lemma}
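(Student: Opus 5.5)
The strategy is to avoid computing any covariances explicitly. We reduce \req{CovVepsN} to two identities for mixed moments, $\E[V'_\Lambda N_\Lambda]$ and $\E[V''_\Lambda N_\Lambda]$, each obtained by applying \req{GNZ} or \req{MGNZ} backwards with a test function that carries a correction factor $N_\Lambda-1$ or $N_\Lambda-2$. The first moments need no work, since $\E[V_\Lambda]=\E[V'_\Lambda]=\E[V''_\Lambda]$ was shown above.

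\emph{Step 1 (the term $V'_\Lambda$).} Apply \req{GNZ} with
\[
   G(x_2,\gamma) \,=\, \chiL(x_2)\sum_{x_1\in\gamma\setminus\{x_2\}}\chiL(x_1)\,v(x_1-x_2)\,\bigl(N_\Lambda(\gamma)-1\bigr)\,.
\]
On the left-hand side, summing over $x_2\in\gamma$ gives $\E[V_\Lambda(N_\Lambda-1)]$. On the right-hand side the configuration becomes $\gamma\cup\{x_2\}$ with $x_2\in\Lambda$, so $N_\Lambda(\gamma\cup\{x_2\})-1=N_\Lambda(\gamma)$. We may also drop the restriction $x_1\neq x_2$, because $e^{-\beta W(x_2\mid\gamma)}=0$ whenever $x_2\in\gamma$ (recall $u(0)=+\infty$). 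The right-hand side is therefore exactly $\E[V'_\Lambda N_\Lambda]$, and we obtain
\[
   \E[V'_\Lambda N_\Lambda] \,=\, \E[V_\Lambda N_\Lambda]-\E[V_\Lambda]\,.
\]

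\emph{Step 2 (the term $V''_\Lambda$).} Apply \req{MGNZ} with $n=2$ and
\[
   G(\xx_2,\gamma)=\chiL(x_1)\chiL(x_2)\,v(x_1-x_2)\,\bigl(N_\Lambda(\gamma)-2\bigr)\,.
\]
The left-hand side is $\E[V_\Lambda(N_\Lambda-2)]$. On the right-hand side $N_\Lambda(\gamma\cup\{x_1,x_2\})-2=N_\Lambda(\gamma)$ for $x_1,x_2\in\Lambda$ (and $H(\xx_2)=+\infty$ when $x_1=x_2$), so the right-hand side is $\E[V''_\Lambda N_\Lambda]$. Hence
\[
   \E[V''_\Lambda N_\Lambda]=\E[V_\Lambda N_\Lambda]-2\E[V_\Lambda]\,.
\]

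\emph{Step 3 (combining).} By \req{Veps},
\[
   \E[V^\eps_\Lambda N_\Lambda]=(1-\eps)\E[V_\Lambda N_\Lambda]+2\eps\bigl(\E[V_\Lambda N_\Lambda]-\E[V_\Lambda]\bigr)-\eps\bigl(\E[V_\Lambda N_\Lambda]-2\E[V_\Lambda]\bigr)=\E[V_\Lambda N_\Lambda]\,.
\]
Since also $\E[V^\eps_\Lambda]=\E[V_\Lambda]$, subtracting $\E[V_\Lambda]\E[N_\Lambda]$ from both sides gives \req{CovVepsN}.

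\emph{Main obstacle: justifying the use of \req{GNZ} and \req{MGNZ}.} Both equations are stated only for nonnegative $G$, while our test functions involve the signed functions $v$ and $N_\Lambda-1$, $N_\Lambda-2$. I would split $v=v^+-v^-$ and write $N_\Lambda-k=N_\Lambda-k\cdot 1$, then apply the equations to each nonnegative piece separately. To subtract the pieces legitimately, every piece must have a finite expectation. On the left-hand sides these are moments of the form $\E[\sum|v|\,N_\Lambda]$; by \req{VN} with $|v|$ in place of $v$ they are bounded by integrals of $|v|$ against $\rho^{(3)}$ and $\rho_2$ over bounded sets, which are finite by the Ruelle bound and $v\in L^1$. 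The right-hand sides are the same quantities by the identities themselves, so they are finite too.
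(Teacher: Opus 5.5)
Your argument is correct, but it is organized differently from the paper's proof. The paper applies \req{GNZ} and \req{MGNZ} forward to $V'_\Lambda N_\Lambda$ and $V''_\Lambda N_\Lambda$. Via \req{rho-n} it identifies the results as the explicit integrals \req{V1N} and \req{V2N} against $\rho^{(3)}$ and $\rho_2$, and then checks that the weights $(1-\eps,2\eps,-\eps)$ reproduce \req{VN} term by term.

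You instead put the shift of $N_\Lambda$ under insertion of points into the test function. Reading \req{GNZ} and \req{MGNZ} backwards then gives $\E[V'_\Lambda N_\Lambda]=\E[V_\Lambda N_\Lambda]-\E[V_\Lambda]$ and $\E[V''_\Lambda N_\Lambda]=\E[V_\Lambda N_\Lambda]-2\E[V_\Lambda]$ directly. These are exactly the differences between \req{VN} and \req{V1N}, \req{V2N}, and correlation functions enter your argument only through the integrability bound.

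Your route buys transparency and generality. It shows the weights work because they sum to one (so the mean is preserved) and the shifts $2\eps\cdot 1-\eps\cdot 2$ cancel. The same argument also extends to any (say bounded, compactly supported) linear statistic $\Phi=\sum_{x\in\gamma}\phi(x)$: replace $N_\Lambda-1$ and $N_\Lambda-2$ by $\Phi(\gamma)-\phi(x_2)$ and $\Phi(\gamma)-\phi(x_1)-\phi(x_2)$, and use the evenness of $v$ once. This gives $\Cov[V^\eps_\Lambda,\Phi]=\Cov[V_\Lambda,\Phi]$.

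The paper's route, in turn, yields explicit correlation-function formulas in the same format as Lemma~\ref{Lem:mixedexpectations1} and the proof of Proposition~\ref{Prop:Henderson}. Finally, your explicit handling of the signed test functions, by splitting into nonnegative pieces and using the finiteness of $\E[\sum|v|\,N_\Lambda]$, settles a point the paper leaves implicit.
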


Now we are in position to prove the following result.

\begin{theorem}
\label{Thm2}
Let $\GG$ be a $(\beta,z,u_0)$-Gibbs point process associated with a superstable
pair potential $u_0$ and an activity $z$ in the gas phase with corresponding
density $\rho$.
Then there exists a constant $c'>0$, such that
\be{Thm2}
   \int_{\R^d}v(x)\bigl(F'[u_0]v\bigr)(x)\dx \,\leq\, 
   -\frac{c'}{\norm{v}_{L^2}^2} \left(\int_{\R^d} v(x)^2\rho_2(x)\dx\right)^2  
\ee
for every $v\in\V\setminus\{0\}$. Accordingly, if $\rho_2(x)>0$ for every
$x\in\R^d\setminus\{0\}$ then $F'[u_0]$ is injective on $\V$.
\end{theorem}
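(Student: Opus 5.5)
Starting from Proposition~\ref{Prop:Henderson}, the sign and size of the quadratic form are controlled by the numerator
\[
   D_\Lambda \,=\, \Var\bigl[V_\Lambda\bigr]\Var\bigl[N_\Lambda\bigr]-\Cov\bigl[V_\Lambda,N_\Lambda\bigr]^2 .
\]
By \req{S0gleich0} the denominator $\lim \Var[N_\Lambda]/|\Lambda| = \rho S(0)$ is finite and, by Theorem~\ref{Thm:structureGibbs} (or Ruelle's compressibility result), strictly positive. So it suffices to bound $D_\Lambda/|\Lambda|^2$ from below by a multiple of $\bigl(\int v^2\rho_2\bigr)^2/\norm{v}_{L^2}^2$ times $\Var[N_\Lambda]/|\Lambda|$.

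\textbf{Step 1: write $D_\Lambda$ as a residual variance.} Write $D_\Lambda = \Var[N_\Lambda]\cdot\min_{t\in\R}\Var[V_\Lambda - tN_\Lambda]$; this is the Schur complement identity, and the minimum is attained at $t=\Cov[V_\Lambda,N_\Lambda]/\Var[N_\Lambda]$.

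\textbf{Step 2: transfer the $\eps$-trick.} For fixed $t$ and $\eps\ge0$, use Lemma~\ref{Lem:Vtilde} together with Lemma~\ref{Lem:VtildeN}:
\[
   \Var\bigl[V_\Lambda^\eps - tN_\Lambda\bigr]
   = \Var\bigl[V^\eps_\Lambda\bigr] - 2t\Cov\bigl[V_\Lambda,N_\Lambda\bigr] + t^2\Var\bigl[N_\Lambda\bigr]
   = \Var\bigl[V_\Lambda - tN_\Lambda\bigr] - 4\eps \int_{\Lambda^2} v^2\rho_2 + \eps^2\mathcal{R}_v .
\]
Nonnegativity of the left side then gives
\[
   \frac{1}{|\Lambda|}\Var\bigl[V_\Lambda - tN_\Lambda\bigr]
   \,\ge\, \frac{4\eps}{|\Lambda|}\int_{\Lambda^2} v(x_1-x_2)^2\rho_2(x_1-x_2)\dxx_2 \,-\, \eps^2 C\norm{v}_{L^2}^2 ,
\]
uniformly in $t$. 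Optimizing in $\eps$ exactly as in the proof of Theorem~\ref{Thm1} yields
\[
   \frac{1}{|\Lambda|}\min_t \Var\bigl[V_\Lambda - tN_\Lambda\bigr] \,\ge\, \frac{4}{C\norm{v}_{L^2}^2}\,a_\Lambda^2 ,
\]
with $a_\Lambda = |\Lambda|^{-1}\int_{\Lambda^2} v^2\rho_2 \to \int v^2\rho_2$.

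\textbf{Step 3: assemble.} By Step~1 the bound of Step~2 gives
\[
   \frac{D_\Lambda}{|\Lambda|^2} \,\ge\, \frac{\Var[N_\Lambda]}{|\Lambda|}\cdot\frac{4a_\Lambda^2}{C\norm{v}_{L^2}^2}.
\]
The limits in Proposition~\ref{Prop:Henderson} exist (by Proposition~\ref{prop:chi4} and the covariance limit computed in its proof), so we may pass to the limit. Dividing by $\lim\Var[N_\Lambda]/|\Lambda|$ and multiplying by $-\beta/2$ gives \req{Thm2} with $c'=2\beta/C$.

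\textbf{Injectivity.} If $F'[u_0]v=0$, the left side of \req{Thm2} vanishes, forcing $\int v^2\rho_2=0$. Since $\rho_2>0$ off the origin, this gives $v=0$ a.e., so $F'[u_0]$ is injective on $\V$.

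\textbf{Main obstacle.} The delicate point is Step~2's use of Lemma~\ref{Lem:VtildeN}: the $\eps$-perturbation must leave the covariance with $N_\Lambda$ unchanged, so that the cross term is unaffected and the bound is uniform in $t$. Given that lemma, the rest is routine. A secondary check is that $\V\subset L^1\cap L^2$ and that the gas-phase assumption ensures the hypotheses of Proposition~\ref{prop:chi4}, so all the limits used above exist.
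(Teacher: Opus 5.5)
Your proposal is correct and follows the paper's proof. The paper also combines Lemma~\ref{Lem:Vtilde} and Lemma~\ref{Lem:VtildeN} to show that replacing $V_\Lambda$ by $V^\eps_\Lambda$ changes $\Var[V_\Lambda]\Var[N_\Lambda]-\Cov[V_\Lambda,N_\Lambda]^2$ only by $(4\eps\int_{\Lambda^2}v(x_1-x_2)^2\rho_2(x_1-x_2)\,dx_1dx_2-\eps^2\mathcal{R}_v)\Var[N_\Lambda]$; it then uses nonnegativity of this Cauchy--Schwarz determinant for the pair $(V^\eps_\Lambda,N_\Lambda)$, which is exactly your minimization over $t$, optimizes in $\eps$ as in Theorem~\ref{Thm1}, and lets $\ell\to\infty$ in Proposition~\ref{Prop:Henderson} to obtain $c'=2\beta/C$.
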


\begin{proof}
Let $v\in\V\setminus\{0\}$, $\Lambda=B_\ell(0)\subset\R^d$ for some $\ell>0$, 
and $\eps>0$. Then it follows from Lemma~\ref{Lem:Vtilde} and 
Lemma~\ref{Lem:VtildeN} that
\begin{align*}
   &\Var\bigl[V_\Lambda\bigr]\Var\bigl[N_\Lambda\bigr]
    \,-\, \Cov\bigl[V_\Lambda,N_\Lambda\bigr]^2\\[1ex]
   &\qquad 
    \,=\, \Var\bigl[V_\Lambda^\eps\bigr]\Var\bigl[N_\Lambda\bigr]
          \,-\, \Cov\bigl[V^\eps_\Lambda,N_\Lambda\bigr]^2
          \,+\, \Bigl(4\eps \int_{\Lambda^2} v(x_1-x_2)^2\rho_2(x_1-x_2)\dxx_2
                      \,-\, \eps^2\mathcal{R}_v
                \Bigr)\Var\bigl[N_\Lambda\bigr]\\[1ex]
   &\qquad 
    \,\geq\, \Bigl(4\eps \int_{\Lambda^2} v(x_1-x_2)^2\rho_2(x_1-x_2)\dxx_2
                \,-\, \eps^2\mathcal{R}_v
          \Bigr)\Var\bigl[N_\Lambda\bigr]\,.
\end{align*}
The right-hand side can be estimated as in the proof of Theorem~\ref{Thm1}
to obtain
\[
   \frac{1}{|\Lambda|^2}
   \Bigl(\Var\bigl[V_\Lambda\bigr]\Var\bigl[N_\Lambda\bigr]
         \,-\, \Cov\bigl[V_\Lambda,N_\Lambda\bigr]^2\Bigr)
   \,\geq\, \frac{4}{C\norm{v}_{L^2}^2}
            \left(\frac{1}{|\Lambda|}
                     \int_{\Lambda^2} v(x_1-x_2)^2\rho_2(x_1-x_2)\dxx_2
            \right)^2
            \frac{\Var\bigl[N_\Lambda\bigr]}{|\Lambda|}\,.
\]
Therefore \req{Thm2} follows from Proposition~\ref{Prop:Henderson} 
with $c'=2\beta/C$ by letting $\ell\to\infty$.
\end{proof}


\begin{remark}
\rm
It has further been shown in \cite{FrHa22} that if $u_0$ is a
\emph{Lennard-Jones type pair potential}, i.e., when $\varphi$ and $\psi$
of \req{superstable} are given by
\[
   \varphi(r) \,=\, cr^{-\alpha} \qquad \text{and} \qquad
   \psi(r)\,=\, C(1+r^2)^{-\alpha/2} 
\]
for some $\alpha>d$ and suitable $c,C>0$, then the Jacobian $F'[u_0]$ 
has an extension to a selfadjoint negative semidefinite operator 
in $\L(L^2(\R^d))$. 
For the same class of pair potentials it is also known that 
$\rho_2(x)>0$ for $x\neq 0$, provided that the density is sufficiently
small; see \cite[Proposition~3.1]{Hank18}.
It therefore readily follows from Theorem~\ref{Thm2} by continuity
that the corresponding extension $F'[u_0]\in\L(L^2(\R^d))$ is, in fact, negative, 
i.e., its null space is trivial.
\fin
\end{remark}

\section{Auxiliary results}
\label{Sec:Auxiliaries}

\subsection{The stationary lattice}
\label{Subsec:Technicalities-lattice}
For second order statistics of the stationary lattice $\LL$ a natural condition 
on the probing function $v$ of the random variable $V_\Lambda$ of \req{G} is that
\[
   \sum_{x\in\Z^d\setminus\{0\}} |v(x)| \,<\, \infty\,.
\]
Under this condition the expected value of $V_\Lambda$ satisfies
\begin{align}
\nonumber
   \frac{\E\bigl[V_\Lambda\bigr]}{|\Lambda|}
   &\,=\, \frac{1}{|\Lambda|}
          \int_{[0,1]^d} \sum_{k_1\neq k_2\in\Z^d}
                     1_\Lambda(k_1+x)1_\Lambda(k_2+x)\testfct(k_1-k_2)\dx\\[1ex]
\nonumber
   &\,=\, \frac{1}{|\Lambda|}
          \sum_{j\in\Z^d\setminus\{0\}} \testfct(j)
          \sum_{k\in\Z^d} \int_{[0,1]^d} 
                     1_\Lambda(k+x)1_\Lambda(j+k+x)\dx\\[1ex]
\label{eq:VarV-lattice}
   &\,=\, \sum_{j\in\Z^d\setminus\{0\}} \testfct(j)\,
          \frac{1}{|\Lambda|}\int_{\R^d} 1_\Lambda(x)1_\Lambda(j+x)\dx
    \,=\, \sum_{j\in\Z^d\setminus\{0\}} \testfct(j)\,
          \frac{\bigl|\Lambda\cap(\Lambda-j)\bigr|}{|\Lambda|}\,.
\end{align}
Since the fraction in the final term is bounded by one and converges to
one as the radius $\ell$ of $\Lambda=B_\ell(0)$ goes to infinity, we see that
\[
   \lim_{\ell\to\infty} \frac{\E\bigl[V_\Lambda\bigr]}{|\Lambda|}
   \,\to\!\! \sum_{j\in\Z^d\setminus\{0\}} \testfct(j)\,.
\]
\color{black}

In two space dimensions we further compute
\begin{align}
\nonumber
    \E\bigl[V_\Lambda^2\bigr] 
    &\,=\, \E\Biggl[\Bigl(
    \sum_{x_1\neq x_2\in\gamma}
    1_{\Lambda}(x_1)1_{\Lambda}(x_2) \,
    \testfct(x_1-x_2)
    \Bigr)^2
    \Biggr]
    \,=\,\int_{[0,1]^2}
    \Bigl(
    \sum_{j,k\in \Z^2\atop j\neq 0}  
    1_{\Lambda}(j+k+x)1_{\Lambda}(k+x) \,
    \testfct(j)
    \Bigr)^2
    \rmd x \\[1ex]
\nonumber
    &\,=\, \int_{[0,1]^2}
    \sum_{j_1,j_2\in\Z^2\setminus\{0\}} \testfct(j_1)\testfct(j_2)
    \sum_{k_1,k_2\in\Z^2} 
    1_{\Lambda}(j_1+k_1+x)1_{\Lambda}(k_1+x) 
    1_{\Lambda}(j_2+k_1+k_2+x)1_{\Lambda}(k_1+k_2+x) \dx \\[1ex]
\nonumber
    &\,=\, 
    \sum_{j_1,j_2\in\Z^2\setminus\{0\}} 
    \testfct(j_1)\testfct(j_2)
    \sum_{k\in \Z^2} \int_{\R^2}  
    1_{\Lambda}(j_1+x)1_\Lambda(x)1_\Lambda(j_2+k+x)1_\Lambda(k+x)
    \dx\\[1ex]
\nonumber
    &\,=\, 
    \sum_{j_1,j_2\in\Z^2\setminus\{0\}} 
    \testfct(j_1)\testfct(j_2)
    \sum_{k\in \Z^2}
    \Bigl|\Lambda\cap(\Lambda-j_1)\cap(\Lambda-k)
          \cap(\Lambda-k-j_2)\Bigr|\\[1ex]
\label{eq:techlattice1}
    &\,=\,
     \ell^2 \!\!\! 
     \sum_{j_1,j_2\in\Z^2\setminus\{0\}} \testfct(j_1)\testfct(j_2)
    \sum_{k\in \Z^2} g_{j_1,j_2}(k)\,,
\end{align}
where
\[
   g_{j_1,j_2}(k) \,=\, \Bigl|\Omega_{j_1}\cap(\Omega_{j_2}-k/\ell)\Bigr|\,, \qquad 
   k\in\R^2\,, \ j_1,j_2\in\Z^2\setminus\{0\}\,,
\]
with
\[
   \Omega_j \,=\, \Bigl(B_1(0)+\frac{j}{2\ell}\Bigr)
             \cap \Bigl(B_1(0)-\frac{j}{2\ell}\Bigr) 
   \qquad \text{for $j\in\Z^2\setminus\{0\}$}
\]
being the intersection of two shifted unit disks.

We point out that $g_{j_1,j_2}$ is a continuous and compactly supported 
function, and hence, the sum over $k\in\Z^2$ in \req{techlattice1} contains
only a finite number of nonzero terms for each pair
of grid indices $j_1,j_2\in\Z^2\setminus\{0\}$. 
Moreover, the Fourier transform of 
\[
   g_{j_1,j_2}(k) \,=\, \int_{\R^2} 1_{\Omega_{j_2}}(x+k/\ell)\,1_{\Omega_{j_1}}(x)\dx 
   \,=\, (1_{\Omega_{j_2}}*1_{\Omega_{j_1}})(k/\ell)
\]
is given by
\be{ghatxi}
   \widehat{g}_{j_1,j_2}(\xi) \,=\,
   \ell^2\,\widehat{1}_{\Omega_{j_1}}(\ell\xi)\,\widehat{1}_{\Omega_{j_2}}(\ell\xi)\,, 
   \qquad \xi\in\R^2\,.
\ee
For $j\in\Z^2\setminus\{0\}$ and $\xi\neq 0$ the divergence theorem gives
\be{einshatG}
\begin{aligned}
   \widehat{1}_{\Omega_j}(\xi)
   &\,=\, \int_{\Omega_j} e^{-2\pi\rmi \xi\cdot x}\dx
    \,=\, \frac{\rmi}{2\pi|\xi|^2} 
          \int_{\partial \Omega_j} e^{-2\pi\rmi \xi\cdot x} \,\xi\cdot\nu\ds\\[1ex]
   &\,=\, \frac{\rmi}{2\pi|\xi|^2} 
          \int_{\partial \Omega_j^+} e^{-2\pi\rmi \xi\cdot x} \xi\cdot\nu\ds
          \,+\, \frac{\rmi}{2\pi|\xi|^2} 
          \int_{\partial \Omega_j^-} e^{-2\pi\rmi \xi\cdot x} \,\xi\cdot\nu\ds\,,
\end{aligned}
\ee
where $\nu$ denotes the outer normal of $\Omega_j$, and $\partial \Omega_j^\pm$ are the
two pieces of the boundary of $\Omega_j$, for which $\nu\cdot j$ is positive,
respectively negative.
Denoting the opening angle of the two arcs $\partial \Omega_j^\pm$ by $2\alpha$,
and the angle between $j$ and $\xi$ by $\beta$, we can rewrite the
two boundary integrals as
\be{boundary-integral}
   \int_{\partial \Omega_j^\pm} e^{-2\pi\rmi \xi\cdot x}\,\xi\cdot\nu\ds
   \,=\, |\xi| e^{\pm\pi\rmi (|\xi||j|/\ell)\cos\beta}
         \int_{-\alpha}^\alpha e^{-2\pi\rmi|\xi|\cos(\beta-t)}
         \cos(\beta-t)\dt\,.
\ee
In this form the integrals are amenable to an application of the van der Corput
lemma, cf., e.g., Stein~\cite{Stei93}: 
Decomposing the domain $(-\alpha,\alpha)$ into subintervals where
\\[-0.5ex]
\begin{itemize}
    \item[(i)] \qquad
               ${\displaystyle
                2\,\bigl|\cos(\beta-t)\bigr|
                \,=\, \Bigl|\frac{\rmd^2}{\dt^2}\bigl(2\cos(\beta-t)\bigr)
                \Bigr| \,\geq\, 1}$\,,\\[-0.5ex]
\end{itemize}
or where\\[-0.5ex]
\begin{itemize}
    \item[(ii)] \qquad
                ${\displaystyle
                 2\,\bigl|\sin(\beta-t)\bigr| 
                 \,=\, \Bigl|\frac{\rmd}{\dt}\bigl(2\cos(\beta-t)\bigr)\Bigr|}$
                 \quad \text{is monotone and greater than one}\,,\\[-0.5ex]
\end{itemize}
we obtain an inequality of the form
\[
   \int_{-\alpha}^\alpha e^{-2\pi\rmi|\xi|\cos(\beta-t)}\cos(\beta-t)\dt
   \,\leq\, C\,|\xi|^{-1/2} \qquad \text{for \ $|\xi|\geq 1$}\,,
\]
with some uniform constant $C>0$, which is independent of $\alpha$, $\beta$,
and hence, independent of $\xi,j\in\R^2$ with $|\xi|,|j|\geq 1$. Inserting this
inequality into \req{boundary-integral} and \req{einshatG} we thus arrive at
\[
   \bigl|\widehat{1}_{\Omega_j}(\xi)\bigr|
   \,\leq\, \frac{C}{\pi}\,|\xi|^{-3/2}\,, \qquad |\xi|\geq 1\,,
\]
and it therefore follows from \req{ghatxi} that
\be{ghatbound}
   \bigl|\widehat{g}_{j_1,j_2}(\xi)\bigr|
   \,\leq\, (C/\pi)^2\,\ell^{-1}\,|\xi|^{-3}\,, \qquad |\xi|\geq 1\,.
\ee
Accordingly, we can use the Poisson summation formula 
to rewrite \req{techlattice1} as
\be{techlattice2}
   \E\bigl[V_\Lambda^2\bigr] 
   \,=\, \ell^2 \!\!\!
         \sum_{j_1,j_2\in\Z^2\setminus\{0\}} \testfct(j_1)\testfct(j_2)
         \sum_{k\in\Z^2} \widehat{g}_{j_1,j_2}(k)\,.
\ee

On the other hand we deduce from \req{VarV-lattice} that
\begin{align*}
   \E\bigl[V_\Lambda\bigr]^2
   &\,=\, \sum_{j_1,j_2\in\Z^2\setminus\{0\}} \testfct(j_1)\testfct(j_2)\,
          \bigl|\Lambda\cap(\Lambda-j_1)\bigr|
          \bigl|\Lambda\cap(\Lambda-j_2)\bigr|\\[1ex]
   &\,=\, \ell^4 \!\!\!
          \sum_{j_1,j_2\in\Z^2\setminus\{0\}}\testfct(j_1)\testfct(j_2)
          \,\bigl|\Omega_{j_1}\bigr|\bigl|\Omega_{j_2}\bigr|
    \,=\, \ell^4 \!\!\!
          \sum_{j_1,j_2\in\Z^2\setminus\{0\}}\testfct(j_1)\testfct(j_2)
          \,\widehat{1}_{\Omega_{j_1}}(0)\,\widehat{1}_{\Omega_{j_2}}(0)\\[1ex]
   &\,=\, \ell^2 \!\!\!
          \sum_{j_1,j_2\in\Z^2\setminus\{0\}}\testfct(j_1)\testfct(j_2)
          \,\widehat{g}_{j_1,j_2}(0)\,.
\end{align*}
Together with \req{techlattice2} we therefore conclude that
\[
   \Var\bigl[V_\Lambda\bigr]
   \,=\, \ell^2 \!\!\!
         \sum_{j_1,j_2\in\Z^2\setminus\{0\}}\testfct(j_1)\testfct(j_2)
         \sum_{k\in\Z^2\setminus\{0\}} \widehat{g}_{j_1,j_2}(k)\,,
\]
and hence, \req{ghatbound} yields
\[
   \frac{\Var\bigl[V_\Lambda\bigr]}{|\Lambda|}
   \,\leq\, (C/\pi)^2\,\frac{\ell}{|\Lambda|}\, 
            \Bigl(\sum_{k\in\Z^2\setminus\{0\}} |k|^{-3}\Bigr)
            \Bigl(\sum_{j\in\Z^2\setminus\{0\}} |v(j)|\Bigr)^2
   \,=\, O(1/\ell)\,, \qquad \ell\to\infty\,.
\]
Accordingly we have shown for the two-dimensional stationary lattice $\LL$, 
that $\Var\bigl[V_\Lambda\bigr]/|\Lambda|\to 0$ as $\ell\to\infty$.

\begin{remark}
\label{Rem:Lotz26}
\rm
For the one-dimensional stationary lattice one can show with the same line of
argument -- in this case $\Omega_j$ is an interval -- that $\Var\bigl[V_\Lambda\bigr]$
is bounded independent of $\ell>0$.
In higher dimensions ($d\geq 3$) the lack of smoothness of the indicator 
functions of intersections of balls makes this approach artificially difficult. 
Instead one can use smooth approximations of $1_\Lambda$, as suggested by 
Lotz and Klatt in \cite{Lotz26} in the context of persistence of hyperuniformity, 
to extend this analysis to higher dimensions.
\fin
\end{remark}

\subsection{Determinantal point processes with projection kernels}
\label{subsec:DPP-technical}
Here we rewrite the representation from Proposition~\ref{prop:chi4} of the 
limit $\Var\bigl[V_\Lambda\bigr]/|\Lambda|$ for determinantal point processes.

\begin{lemma}\label{Lem:NHUDPP}
For the determinantal point process $\KK$ defined in \req{rhodpp} 
and for $V_\Lambda$ given by \req{G} for some even function 
$v\in L^1(\R^d)\cap L^2(\R^d)$ there holds
\[
   \lim_{l\to \infty}\frac{1}{|\Lambda|}\Var \left[\Testfct_\Lambda\right]
   \,=\, 2\int_{\R^d}
             \big(\Ghat(\xi)\big)^2\,\bigl|(E+\xi)\setminus E\bigr|^2
          \dxi  
         \,-\, 2 \int_{(\R^d)^2} 
                    \Ghat(\xi_1)\Ghat(\xi_2)\,
    \bigl|\,(E+\xi_1)\cap(E+\xi_2)\setminus(E+\xi_1+\xi_2)\setminus E\,\bigr|
    \rmd(\xi_1,\xi_2)\,.
\]
\end{lemma}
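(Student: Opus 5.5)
The plan is to start from the limit formula \req{Hank17b} of Proposition~\ref{prop:chi4}, insert the determinantal correlation functions \req{rhodpp}, and convert each term to Fourier space by Plancherel. First I will check the hypotheses of Proposition~\ref{prop:chi4}. By \req{rhodpp} the correlation functions are bounded by one, and $K\in L^2\cap L^\infty$ by \req{K-L2}. Expanding the $4\times4$ determinant, $\chi^{(4)}$ is a sum of products of $K$-factors in which some factor links $\{x_1,x_2\}$ to $\{x_3,x_4\}$; the term $K(x_1-x_2)^2K(x_3-x_4)^2$ has been removed. Each remaining term contains at least two factors of the form $K(x_i-x_j)$ with $i\in\{1,2\}$ and $j\in\{3,4\}$. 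Integrating in $x_3$ and applying Cauchy--Schwarz with $\norm{K}_{L^2}=1$ then gives \req{chi4-bound}.

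Next I will write out the three terms.

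\emph{Pair term.} Since $\rho_2=1-K^2$, Plancherel gives
\[
2\int v^2(1-K^2)=2\int \Ghat(\xi)^2\dxi-2\int\Ghat(\xi_1)\Ghat(\xi_2)\,|(E+\xi_1)\cap(E+\xi_2)|\,\rmd(\xi_1,\xi_2),
\]
using $\widehat{K^2}=1_E*1_E$.

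\emph{Triple term.} $\rho^{(3)}(0,x_1,x_2)$ is the $3\times3$ determinant: $1-K(x_1)^2-K(x_2)^2-K(x_1-x_2)^2+2K(x_1)K(x_2)K(x_1-x_2)$. Each monomial in $K$ has a Fourier transform that is a convolution of indicators $1_E$, so it becomes an integral over $(\xi_1,\xi_2)$ of $\Ghat(\xi_1)\Ghat(\xi_2)$ times a measure of intersections of translates of $E$. For the cubic term, write each $K(y)=\int_E e^{2\pi\rmi y\cdot\eta}\rmd\eta$ and integrate out $x_1,x_2$. This yields $\int\Ghat(\xi_1)\Ghat(\xi_2)\,|E\cap(E+\xi_1)\cap(E+\xi_2)|$, up to sign conventions, which are fixed by the evenness of $v$ and the symmetry of $E$. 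The constant term gives $\Ghat(0)^2$.

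\emph{Quadruple term.} I will use the cycle expansion of $\chi^{(4)}$: it consists of the permutations of $\{1,2,3,4\}$ whose cycles do not respect the partition $\{1,2\},\{3,4\}$. I will integrate over $x_3$, then over $x_1$ and $x_2$ against $v$. Products like $K(x_1-x_3)^2K(x_2-x_4)^2$ integrated over $x_3$ give $\Ghat(0)$-type contributions. The 4-cycles and the 3-cycles give $\int\Ghat(\xi_1)\Ghat(\xi_2)$ times measures of fourfold and threefold intersections. I expect only translates of the form $E$, $E+\xi_1$, $E+\xi_2$, $E+\xi_1+\xi_2$ to appear. A term that only involves $v$ through $\Ghat(0)$ would require a separate check that it cancels.

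Collecting terms, I will check three cancellations. The $\Ghat(0)^2$ contributions, with coefficient $4$ from the triple term and others from $\chi^{(4)}$, must cancel. The single-variable pieces should combine into
\[
1-2|E\cap(E+\xi)|+|E\cap(E+\xi)|^2=|(E+\xi)\setminus E|^2,
\]
using $|E|=1$. The two-variable pieces should combine, by inclusion--exclusion, into $|(E+\xi_1)\cap(E+\xi_2)\setminus(E+\xi_1+\xi_2)\setminus E|$. Formally, $\int|(E+\xi_1)\cap(E+\xi_2)\cap(E+\xi_1+\xi_2)|\dxi_2=|E\cap(E+\xi_1)|$ helps to identify the squared single-variable term. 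Rather than this, I would reduce the single-variable pieces through identities like \req{intM}.

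The main obstacle is the bookkeeping of the $4\times4$ determinant: identifying which permutations contribute, with which signs and multiplicities, and matching the resulting intersection measures after the changes of variables. I would first carry out the computation for $v$ in the Schwartz class with $\Ghat$ compactly supported, where all Fourier manipulations are justified. I would then pass to general even $v\in L^1\cap L^2$ by density, since both sides are continuous in $\norm{v}_{L^1}+\norm{v}_{L^2}$: $\Ghat$ is bounded and $L^2$, and the $\theta$-type kernels have bounded $\xi_2$-integrals.
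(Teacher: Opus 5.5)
Your route is the same as the paper's: insert \req{rhodpp} into \req{Hank17b} and pass to Fourier space with Plancherel. Your check of \req{chi4-bound} is correct, and the paper leaves it implicit: every permutation that does not preserve $\{1,2\},\{3,4\}$ carries at least two factors $K(x_i-x_3)$ or $K(x_i-x_3-x_4)$ with $i\in\{1,2\}$, and Cauchy--Schwarz in $x_3$ bounds their product by $\norm{K}_{L^2}^2=1$. The gap is that the content of this lemma is exactly the term-by-term bookkeeping you defer. Your preview of that bookkeeping assigns several contributions to the wrong terms and omits one cancellation, so the three checks you list would not close the argument as stated.

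Here is the correct accounting, after the substitution $(x_1,0,x_3,x_3+x_2)$ and integration in $x_3$.
\begin{itemize}
\item The four transpositions give $-\norm{K}_{L^2}^2=-1$ each. This yields $-4\,\widehat v(0)^2$, which cancels the constant term of $4\rho^{(3)}$.
\item The double transpositions $(13)(24)$ and $(14)(23)$ are not of $\widehat v(0)$-type. They give $(K^2*K^2)(x_1\pm x_2)$, i.e.\ $2\int\widehat v(\xi)^2|E\cap(E+\xi)|^2\dxi$. This is exactly the quadratic term in $2(1-|E\cap(E+\xi)|)^2$, and no identity of the type \req{intM} is involved.
\item The four 3-cycle products do not give threefold intersections. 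By $K*K=K$ each collapses to $K(x_1)^2$ or $K(x_2)^2$, producing $+8\,\widehat v(0)\int vK^2\dx$. This must cancel against $-8\,\widehat v(0)\int vK^2\dx$ from the terms $-K^2(x_1)-K^2(x_2)$ of $\rho^{(3)}$. That cancellation is missing from your list.
\item Of the three 4-cycle products, the two containing both $K(x_1)$ and $K(x_2)$ (the images of $K(x_1-x_2)$ and $K(x_3-x_4)$) also collapse via $K*K=K$. They give $-4\int\!\!\int\widehat v(\xi_1)\widehat v(\xi_2)|E\cap(E+\xi_1)\cap(E+\xi_2)|$. Only the remaining 4-cycle yields a fourfold intersection.
\end{itemize}
Add the $+8$ from the cubic term of $\rho^{(3)}$ and the $-2\norm{Kv}_{L^2}^2$ from the pair term. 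With $A=E+\xi_1$, $B=E+\xi_2$, $C=E+\xi_1+\xi_2$, the two-variable kernel is then $-2|A\cap B|+4|A\cap B\cap E|-2|A\cap B\cap C\cap E|$.

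Inclusion--exclusion produces the claimed set only after you split $4=2+2$. This uses $\int\!\!\int\widehat v\,\widehat v\,|A\cap B\cap E|=\int\!\!\int\widehat v\,\widehat v\,|A\cap B\cap C|$, obtained by translating by $-(\xi_1+\xi_2)$ and using that $\widehat v$ is even. This is precisely the change of variables in \req{limit-erg2}.

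The paper avoids most of this by simplifying in $x$-space first, using $\norm{K}_{L^2}=1$ and $K*K=K$. All $\widehat v(0)$ terms then cancel visibly, and only the three lines \req{limit-tmp} remain to be Fourier transformed.
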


\begin{proof}
Let $v\in L^1(\R^d)\cap L^2(\R^d)$ be an even function. According to \req{rhodpp} we have 
\[
   \rho^{(3)}(0,x_1,x_2)
   \,=\, 1 \,+\, 2K(x_1)K(x_2)K(x_1-x_2)
         \,-\, K^2(x_1) \,-\, K^2(x_2) \,-\, K^2(x_1-x_2)\,,
\]
and hence,
\be{dpplimes-teil1}
\begin{aligned}
   &2\int_{\R^d} v^2(x)\rho_2(x)\dx
   \,+\, 4\int_{(\R^d)^2}v(x_1)v(x_2)\rho^{(3)}(0,x_1,x_2)\dxx_2 \\[1ex]
   &\qquad\,=\,
   2\,\norm{v}^2_{L^2} \,-\, 2\,\norm{Kv}_{L^2}^2
   \,+\, 4\left(\int_{\R^d}v(x)\dx\right)^2\\[1ex]
   &\qquad\phantom{\,=}\
   \,+\, 8 \int_{\R^d} K(x)v(x)\bigl(K*(Kv)\bigr)(x)\dx
   \,-\, 8\int_{\R^d} v(x)\dx\int_{\R^d}K^2(x)v(x)\dx
   \,-\, 4 \int_{\R^d} v(x)(K^2*v)(x)\dx\,.
\end{aligned}
\ee

On the other hand, it follows from \req{chi4} and \req{rhodpp} that
\begin{align*}
    \chi^{(4)}(\xx_4)
    &\,=\, -K(x_2-x_3)^2 \,-\, K(x_2-x_4)^2 \,-\, K(x_1-x_3)^2-K(x_1-x_4)^3 
           \\[1ex]
    &\phantom{\,=\ }
           \,+\, 2 K(x_1-x_2)K(x_2-x_3)K(x_1-x_3)
           \,+\, 2 K(x_1-x_2)K(x_2-x_4)K(x_1-x_4)\\[1ex]
    &\phantom{\,=\ }
           \,+\,2 K(x_1-x_3)K(x_3-x_4)K(x_1-x_4) 
           \,+\,2 K(x_2-x_3)K(x_2-x_4)K(x_3-x_4)\\[1ex]
    &\phantom{\,=\ }
           \,+\, K(x_2-x_4)^2K(x_1-x_3)^2
           \,+\, K(x_2-x_3)^2K(x_1-x_4)^2 \\[1ex]
    &\phantom{\,=\ }
           \,-\, 2 K(x_1-x_2)K(x_2-x_4)K(x_1-x_3)K(x_3-x_4)
           \,-\, 2K(x_1-x_2)K(x_2-x_3)K(x_3-x_4)K(x_1-x_4)\\[1ex]
    &\phantom{\,=\ }
           \,-\, 2K(x_2-x_3)K(x_2-x_4)K(x_1-x_3)K(x_1-x_4)\,.
\end{align*}
Accordingly, exploiting the symmetry of $K$ and $v$, there holds
\begin{align*}
   &\int_{(\R^d)^3}v(x_1)v(x_2)\chi^{(4)}(x_1,0,x_3,x_2+x_3)\dxx_3
    \\[1ex]
   &\qquad\,=\, 
    -\int_{(\R^d)^3} v(x_1)v(x_2)
              \Bigl(K^2(x_3) \,+\, K^2(x_2+x_3) \,+\, K^2(x_1-x_3)
                    \,+\, K^2(x_1-x_2-x_3)\Bigr)\dxx_3\\[1ex]
   &\qquad\phantom{\,=\ }
       \,+\, 2\int_{(\R^d)^3} v(x_1)v(x_2)
                \Bigl(K(x_1)K(x_3)K(x_1-x_3) 
                      \,+\, K(x_1)K(x_2+x_3)K(x_1-x_2-x_3)\\
   &\qquad\phantom{\,=\ 
       \,+\, 2\int_{(\R^d)^3} v(x_1)v(x_2)\Bigl(}
             +\, K(x_3-x_1)K(x_2)K(x_1-x_2-x_3) 
             \,+\, K(x_2)K(x_3)K(x_2+x_3)\Bigr)\\[1ex]
   &\qquad\phantom{\,=\ }
       \,+\, \int_{(\R^d)^3} v(x_1)v(x_2)
                \Bigl(K^2(x_2+x_3)K^2(x_1-x_3) 
                      \,+\, K^2(x_3)K^2(x_1-x_2-x_3)\Bigr)\dxx_3\\[1ex]
   &\qquad\phantom{\,=\ }
       \,-\, 2\int_{(\R^d)^3} v(x_1)v(x_2)
                \Bigl(K(x_1)K(x_2)K(x_1-x_3)K(x_2+x_3) 
                   \,+\, K(x_1)K(x_2)K(x_3)K(x_1-x_2-x_3)\Bigr)\dxx_3
                   \\[1ex]
   &\qquad\phantom{\,=\ }
       \,-\, 2\int_{(\R^d)^3} v(x_1)v(x_2)
                K(x_3)K(x_2+x_3)K(x_1-x_3)K(x_1-x_2-x_3)\dxx_3\,.
\end{align*}
It turns out that the integrals in the individual lines all have
the same values, respectively, giving
\begin{align*}
   &\int_{(\R^d)^3}v(x_1)v(x_2)\chi^{(4)}(x_1,0,x_3,x_2+x_3)\dxx_3\\[1ex]
   &\qquad\,=\, -4\,\Bigl(\int_{\R^d} v(x)\dx\Bigr)^2 \norm{K}_{L^2}^2
          \,+\, 8 \int_{\R^d}v(x)\dx \int_{\R^d}v(x)K(x)(K*K)(x)\dx
          \\[1ex]
   &\qquad\phantom{\,=\ }
          \,+\, 2 \int_{\R^d} v(x)(K^2*K^2*v)(x)\dx
          \,-\, 4\int_{\R^d} v(x)K(x)
              \bigl(K*K*(Kv)\bigr)(x)\dx\\[1ex]
   &\qquad\phantom{\,=\ }
       \,-\, 2\int_{(\R^d)^3} v(x_1)v(x_2)
                K(x_3)K(x_2+x_3)K(x_1-x_3)K(x_1-x_2-x_3)\dxx_3\,.
\end{align*}
Inserting this result into \req{Hank17b}, together with \req{dpplimes-teil1},
and making use of the fact that $\norm{K}_{L^2}=1$ and $K*K=K$, 
cf.~Section~\ref{ss:DPP}, we obtain
\begin{subequations}
\label{eq:limit-tmp}
\begin{align}
\label{eq:limit-tmp1}
   \lim_{l\to \infty}\frac{1}{|\Lambda|}\Var \left[\Testfct_\Lambda\right]
   &\,=\, 2\,\norm{v}_{L^2}^2 \,-\, 4 \int_{\R^d} v(x)(K^2*v)(x)\dx
          \,+\, 2 \int_{\R^d} v(x)(K^2*K^2*v)(x)\dx\\[1ex]
\label{eq:limit-tmp2}
   &\phantom{\,=\ }
    -\, 2\,\norm{Kv}_{L^2}^2
           \,+\, 4 \int_{\R^d} K(x)v(x)\bigl(K*(Kv)\bigr)(x)\dx   \\[1ex]
\label{eq:limit-tmp3}
   &\phantom{\,=\ }
    -\, 2\int_{(\R^d)^3} v(x_1)v(x_2)
                K(x_3)K(x_2+x_3)K(x_1-x_3)K(x_1-x_2-x_3)\dxx_3\,.
\end{align}
\end{subequations}
The three lines of \req{limit-tmp} will now be treated separately; denote
their values by $l_1$, $l_2$, and $l_3$, respectively.
Concerning~\req{limit-tmp1} we use the Plancherel identity and the
convolution theorem to obtain
\[
   l_1 \,=\, 2\int_{\R^d}
                \bigl(\Ghat(\xi)\bigr)^2\bigl(1\,-\, \widehat{K^2}(\xi)\bigr)^2 
              \dxi\,,
\]
where  
\[
   \widehat{K^2}(\xi) \,=\, \int_{\R^d} \Khat(\xi')\Khat(\xi-\xi')\dxi'
   \,=\, \bigl| E \cap (E+\xi) \bigr|\,,
\]
because $\Khat=1_E$ and $E$ is symmetric with respect to the origin. 
Moreover, since $|E|=1$ this gives
\begin{subequations}
\label{eq:limit-erg}
\be{limit-erg1}
   l_1 \,=\, 2 \int_{\R^d} \bigl(\Ghat(\xi)\bigr)^2\,
                   \bigl|(E+\xi)\setminus E\bigr|^2\dxi\,.
\ee

As far as \req{limit-tmp2} is concerned, the Plancherel identity gives
\begin{align*}
   l_2 \,=\, \int_{\R^d} (Kv)(x)\bigl(4 K*(Kv)(x) - 2(Kv)(x)\bigr)\dx
   \,=\, \int_{\R^d} \widehat{Kv}(\xi)\bigl(4\Khat(\xi)-2\bigr)\widehat{Kv}(\xi)\dxi\,.
\end{align*}
Rewriting the Fourier transform of $Kv$ in terms of the convolution integral
\[
   \widehat{Kv}(\xi) \,=\, \int_{\R^d} \Ghat(\xi')\Khat(\xi-\xi')\dxi'\,,
\]
and using the short-hand notation $\xifett_n$
for $(\xi_1,\dots,\xi_n)\in(\R^d)^n$, $n\in\N$, it follows that
\begin{align}
\nonumber
   l_2 &\,=\, \int_{(\R^d)^3} 
                 \Ghat(\xi_1)\Ghat(\xi_2) \Khat(\xi_3-\xi_1)
                 \bigl(4\Khat(\xi_3)-2\bigr)\Khat(\xi_3-\xi_2)\dxifett_3\\[1ex]
\nonumber
   &\,=\, 2\int_{(\R^d)^3} 
             \Ghat(\xi_1)\Ghat(\xi_2) 
             \Khat(\xi_3-\xi_1)\Khat(\xi_3)\Khat(\xi_3-\xi_2)\dxifett_3\\[1ex]
\nonumber
   &\phantom{\,=\ }
          -\, 2\int_{(\R^d)^3}
             \Ghat(\xi_1)\Ghat(\xi_2) 
             \Khat(\xi_3-\xi_1)\bigl(1-\Khat(\xi_3)\bigr)
             \Khat(\xi_3-\xi_2)\dxifett_3\\[1ex]
\nonumber
   &\,=\, 2\int_{(\R^d)^3} 
             \Ghat(\xi_1)\Ghat(\xi_2) 
             \Khat(\xi_3-\xi_1)\Khat(\xi_3)\Khat(\xi_3-\xi_2)\dxifett_3\\[1ex]
\nonumber
   &\phantom{\,=\ }
          -\, 2\int_{(\R^d)^3}
             \Ghat(\xi_1)\Ghat(\xi_2) 
             \Khat(\xi_3+\xi_2)\bigl(1-\Khat(\xi_3+\xi_1+\xi_2)\bigr)
             \Khat(\xi_3+\xi_1)\dxifett_3\\[1ex]
\label{eq:limit-erg2}
   &\,=\, 2\int_{(\R^d)^2}
              \Ghat(\xi_1)\Ghat(\xi_2)
              \Bigl(\bigl|\,(E+\xi_1)\cap E\cap(E+\xi_2)\,\bigr|
                 \,-\, \bigl|\,(E+\xi_1)\cap(E+\xi_2)\setminus (E+\xi_1+\xi_2)\,
                 \bigr| 
              \Bigr)\dxifett_2\,,
\end{align}
where we have used in the final step that $\Ghat$ is an even function.

Finally, we consider the integrand of \req{limit-tmp3}
as a product of the first three functions and the remaining three, 
compute their $3d$-dimensional Fourier transforms and use the corresponding
Plancherel identity to rewrite 
\be{limit-erg3}
\begin{aligned}
   l_3 &\,=\, -2 \int_{(\R^d)^2} 
                    \Ghat(\xi_1)\Ghat(\xi_2)
                 \int_{\R^d}\Khat(\xi_3)
                    \Khat(\xi_1+\xi_3)\Khat(\xi_3-\xi_2)\Khat(\xi_1+\xi_3-\xi_2)
                 \dxi_3\dxifett_2\\[1ex]
       &\,=\, -2 \int_{(\R^d)^2}
                    \Ghat(\xi_1)\Ghat(\xi_2)\,
                    \bigl|\,E\cap(E+\xi_1)\cap(E+\xi_2)\cap(E+\xi_1+\xi_2)\,
                    \bigr|
                 \dxifett_2\,,
\end{aligned}
\ee
where we have used once again that $\Ghat(\xi_1)=\Ghat(-\xi_1)$.
\end{subequations}

Inserting \req{limit-erg} into \req{limit-tmp} we finally arrive at
\begin{align*}
   &\lim_{l\to \infty}\frac{1}{|\Lambda|}\Var \left[\Testfct_\Lambda\right]
    \,=\, l_1 + l_2 + l_3\\[1ex]
   &\qquad 
    \,=\, 2 \int_{\R^d} 
               \bigl(\Ghat(\xi)\bigr)^2\,\bigl|(E+\xi)\setminus E\bigr|^2
            \dxi
          \,-\, 2 \int_{(\R^d)^2} 
                     \Ghat(\xi_1)\Ghat(\xi_2)\,
                     \bigl|\,(E+\xi_1)\cap(E+\xi_2)
                             \setminus E\setminus(E+\xi_1+\xi_2)\,\bigr|
                  \dxifett_2\,,
\end{align*}
which was to be shown.
\end{proof}

\color{black}

\subsection{Gibbs point processes with superstable pair interactions}
\label{subsec:Gibbs-technical}
Here we start with a bound for the expected interaction energy
of a given additional configuration.

\begin{lemma}
\label{Lem2.1}
Assume that $\GG$ is a $(\beta,z,u)$-Gibbs point process with a superstable pair potential $u$. 
Denote by $q>0$ the parameter in the Ruelle bound~\req{R} and by $B$ the constant in the
stability bound~\req{B}. Then there holds
\begin{align}\label{eq:eminusWbd}
    \E \bigl[e^{ -\beta W(\xx_n\mid \gamma  ) } \bigr]
    \,\leq\, \exp\bigl(nq\,e^{2n\beta B} \norm{f}_{L^1} \bigr)
\end{align}
for every $n\in\N$ and $\xx_n\in(\R^d)^n$.
\end{lemma}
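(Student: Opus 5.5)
We need to bound $\E[e^{-\beta W(\xx_n|\gamma)}]$. Standard approach: write $e^{-\beta W(\xx_n\mid\gamma)}=\prod_{y\in\gamma}\prod_{i}(1+f(x_i-y))$ and expand.

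The natural route is to expand $e^{-\beta W(\xx_n\mid\gamma)}$ in Mayer functions and integrate term by term against the correlation functions, using the Ruelle bound (R). Setting $F(y)=\prod_{i=1}^n(1+f(x_i-y))-1$, we have
\[
   e^{-\beta W(\xx_n\mid\gamma)} \,=\, \prod_{y\in\gamma}\bigl(1+F(y)\bigr)
   \,=\, \sum_{\eta\subset\gamma,\,\eta\in\Gamma_0}\ \prod_{y\in\eta}F(y)\,,
\]
at least when $\gamma$ is finite in the relevant region. The immediate difficulty is that $F$ need not have a sign and $|F|$ is not simply bounded by $\sum_i|f(x_i-y)|$ with a uniform constant. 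The constant $e^{2n\beta B}$ in \req{eminusWbd} suggests the following alternative instead.

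I would first reduce to a bound in terms of correlation functions. Since $\rho^{(n)}(\xx_n)=z^n\E[e^{-\beta H(\xx_n)-\beta W(\xx_n\mid\gamma)}]$, this gives information only where $H(\xx_n)<\infty$, and in the wrong direction, so it does not help. I would therefore work with the positive and negative parts of $f$. Writing $f=f_+-f_-$ with $f_-\in[0,1]$ and $f_+\ge 0$ (the attractive part), and using $1+f\le e^{f_+}$, I would bound
\[
   e^{-\beta W(\xx_n\mid\gamma)} \,\le\, \prod_{y\in\gamma}\prod_i\bigl(1+f_+(x_i-y)\bigr)\,.
\]
The expectation of $\prod_{y\in\gamma}(1+g(y))$ for a nonnegative $g$ equals $\sum_k\frac1{k!}\int g^{\otimes k}\rho^{(k)}\le \exp\bigl(q\int g\bigr)$ by (R). Here $g=\prod_i(1+f_+(x_i-\cdot))-1$, so I need $\int g\le n\,e^{2n\beta B}\norm{f}_{L^1}$. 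Using $1+f_+(x)=e^{-\beta u(x)}\le e^{2\beta B}$, which follows from stability \req{B} applied to a two-point configuration ($u\ge -2B$), the telescoping estimate
\[
   \prod_i a_i-1\,\le\,\sum_i (a_i-1)\prod_{j<i}a_j\,\le\,\sum_i f_+(x_i-y)\,e^{2(i-1)\beta B}
\]
yields $\int g\le n\,e^{2n\beta B}\norm{f}_{L^1}$. This is exactly the desired bound.

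The main obstacle is the rigor of the expansion: the formula for $\E\bigl[\prod_{y\in\gamma}(1+g(y))\bigr]$ requires justification for infinite $\gamma$. I would handle this by truncating $g$ to $g1_{B_R(0)}$, using monotone convergence as $R\to\infty$, and noting that for bounded support the identity follows from \req{useofrho} applied to symmetric sums over distinct points, which is a finite expansion almost surely. I also need to check that $W=+\infty$ cases are harmless (the integrand is then $0$) and that the conditionally convergent definition of $W$ is compatible with the pointwise product bound. I expect the latter to hold because, when $\sum|u|<\infty$, the product converges absolutely.
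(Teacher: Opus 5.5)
Your proof is correct. It follows the same overall strategy as the paper: expand in correlation functions, apply the Ruelle bound~\req{R}, and control an $L^1$ norm using $u\geq -2B$ from~\req{B}. Where you differ is in how the expansion is set up.

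The paper expands the signed function $F(y)=e^{-\beta W(\xx_n\mid y)}-1$ directly. It quotes the Laplace-functional identity
\[
   \E\bigl[e^{-\beta W(\xx_n\mid\gamma)}\bigr]
   \,=\, \sum_{k=0}^\infty\frac{1}{k!}\int_{(\R^d)^k}\prod_{j=1}^k F(y_j)\,\rho^{(k)}(\yy_k)\dyy_k
\]
from the literature. It then bounds the series term by term in absolute value, using $\int_{\R^d}|F(y)|\dy\leq n\,e^{2n\beta B}\norm{f}_{L^1}$.

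You instead first dominate $e^{-\beta W(\xx_n\mid\gamma)}$ pointwise by $\prod_{y\in\gamma}\prod_i\bigl(1+f_+(x_i-y)\bigr)$, so only nonnegative integrands occur. The expansion then follows from~\req{useofrho} and Tonelli's theorem alone, and the case $W=+\infty$ is trivial. This buys you two things:
\begin{itemize}
\item a self-contained justification, with no appeal to an expansion for signed functions on infinite configurations;
\item a slightly sharper bound, $\exp\bigl(q\sum_{i=1}^n e^{2(i-1)\beta B}\norm{f_+}_{L^1}\bigr)$, which involves only the attractive part of $f$.
\end{itemize}

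One correction to your preliminary discussion: the difficulty you anticipate with $|F|$ does not exist. Take your own telescoping identity with absolute values and combine it with $0\leq 1+f=e^{-\beta u}\leq e^{2\beta B}$. This gives
\[
   |F(y)| \,\leq\, \sum_{i=1}^n |f(x_i-y)|\prod_{j<i}\bigl(1+f(x_j-y)\bigr)
   \,\leq\, e^{2(n-1)\beta B}\sum_{i=1}^n |f(x_i-y)|\,,
\]
which is exactly the ``well-known inequality'' the paper invokes. So the signed route works as well. Its only extra cost is justifying the signed expansion, which the paper delegates to the cited theorem.
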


\begin{proof}
Using the Laplace functional of $\GG$ we can rewrite
\begin{align*}
    \E\left[e^{ -\beta W(\xx_{n}\mid \gamma)}\right] 
    \,=\, 
    \sum_{k=0}^\infty \frac{1}{k!}\int_{(\R^d)^k}
    \prod_{j=1}^k
    \left(e^{ -\beta W(\xx_n\mid y_j)}-1
    \right) \rho^{(k)}(\yy_k) \dyy_k\,,
\end{align*}  
cf, e.g., Jansen~\cite[Theorem~2.40]{jansen18}.
Then the assertion~\req{eminusWbd} follows from the well-known inequality
\[
    \int_{\R^d}\Bigl|e^{-\beta W(\xx_n\mid y)}-1\Bigr|\dy
    \,\leq\, n e^{2n\beta B} \norm{f}_{L^1}
\]
and the Ruelle bound~\req{R} for the correlation functions.
\end{proof}

In the following results we will make use of the abbreviations
\[
   u_{ij} = u(x_i-x_j) \qquad \text{and} \qquad f_{ij} = f(x_i-x_j)
\]
for $i,j\in\N$.

\begin{lemma}\label{Lem:mixedexpectations1}
For $V_\Lambda$, $V_\Lambda'$, and $V_\Lambda''$ of \req{G},
\req{2ndorderintegral2}, and \req{2ndorderintegral3}, respectively, 
there hold
\begin{align*}
    \E \bigl[V_\Lambda V_\Lambda'\bigr] 
    &\,=\, \int_{\Lambda^4}  
               v(x_1-x_2)v(x_3-x_4)\rho^{(4)}(\xx_4)\dxx_4
           \,+\, 2 \int_{\Lambda^3} 
                      v(x_1-x_2)v(x_1-x_3)\rho^{(3)}(\xx_{3})\dxx_{3}\,,\\[1ex]
    \E \bigl[V_\Lambda V_\Lambda''\bigr] 
    &\,=\, \int_{\Lambda^4} v(x_1-x_2)v(x_3-x_4)\rho^{(4)}(\xx_4)\dxx_{4}\,,
           \\[1ex]
   \E\bigl[(V_\Lambda')^2\bigr]
   &\,=\, \int_{\Lambda^4} v(x_3-x_1)v(x_3-x_2) \rho^{(4)}(\xx_4)\dxx_4
          \,+\, \int_{\Lambda^3} v(x_3-x_1)v(x_3-x_2)\rho^{(3)}(\xx_3)\dxx_3
          \,-\, \mathcal{R}_1 \,-\, \mathcal{R}_2\,,\\[1ex]
   \E\bigl[V_\Lambda'V_\Lambda''\bigr]
   &\,=\, \int_{\Lambda^4} v(x_4-x_3)v(x_1-x_2) \rho^{(4)}(\xx_4)\dxx_4
          \,-\, \mathcal{R}_3\,,\\[1ex]
   \E \bigl[(V_\Lambda'')^2\bigr]
   &\,=\, \int_{\Lambda^4} v(x_1-x_2)v(x_3-x_4) \rho^{(4)}(\xx_4)\dxx_4
          \,-\, \mathcal{R}_4\,,
\end{align*}
where
\begin{subequations}
\label{eq:R1bis4}
\begin{align}\label{eq:R0}
    \mathcal{R}_1
    &\,=\, z^3 \int_{\Lambda^3} 
                  v(x_3-x_1)v(x_3-x_2)\,f_{12}\,
                  e^{-\beta u_{13}-\beta u_{23}}\,
                  \E\bigl[e^{-\beta W(\xx_3\mid\gamma)}\bigr]\dxx_3 \\[1ex]
    \label{eq:R1}
    \mathcal{R}_2
    &\,=\, z^4 \int_{\Lambda^4} 
                  v(x_3-x_1)v(x_4-x_2)\,f_{12}\,
                  e^{-\beta u_{13}-\beta u_{14}-\beta u_{23} 
                     -\beta u_{24}-\beta u_{34}}\,
                  \E\bigl[e^{-\beta W(\xx_4\mid\gamma)}\bigr]\dxx_4 \,,\\[1ex]
    \label{eq:R2}
    \mathcal{R}_3
    &\,=\,
    z^4\int_{\Lambda^4}
          v(x_4-x_3)v(x_1-x_2)\, 
          e^{-\beta u_{12}-\beta u_{14}-\beta u_{24}-\beta u_{34}}\,
          \bigl(e^{-\beta W(\xx_2\mid x_3)}-1\bigr)\,
          \E \bigl[e^{-\beta W(\xx_{4}\mid\gamma)} \bigr]\dxx_{4} \,,\\[1ex]
    \label{eq:R3}
    \mathcal{R}_4
    &\,=\,
    z^4\int_{\Lambda^4}
          v(x_3-x_4)v(x_1-x_2)\, 
          e^{-\beta u_{12}-\beta u_{34}}\,
          \bigl(e^{-\beta W(\xx_2\mid x_3,x_4)}-1\bigr)\,
          \E \bigl[e^{-\beta W(\xx_{4}\mid\gamma)} \bigr]\dxx_{4} \,.
\end{align}
\end{subequations}
\end{lemma}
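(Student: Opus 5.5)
The plan is to compute each of the five mixed moments by the same mechanism used for \req{NxiNxiprime} and \req{absNxiprime} in the proof of Theorem~\ref{Thm:structureGibbs}. Write every occurrence of $V_\Lambda$ as a sum over distinct points of $\gamma$ and remove those sums by \req{GNZ} or \req{MGNZ}. This adds the summation points to $\gamma$ and produces factors $z^n e^{-\beta H-\beta W(\cdot\mid\gamma)}$. Whenever an integrated variable coming from $V'_\Lambda$ or $V''_\Lambda$ is already accompanied by the full Boltzmann factor of all the points present, \req{rho-n} turns $\E[z^n e^{-\beta H(\xx_n)-\beta W(\xx_n\mid\gamma)}]$ into $\rho^{(n)}(\xx_n)$. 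Where a pair interaction is missing, we insert it via $1=e^{-\beta u_{ij}}-f_{ij}$, or, for interactions with several points, via $1=e^{-\beta W}-(e^{-\beta W}-1)$. This generates the remainder terms $\mathcal{R}_1,\dots,\mathcal{R}_4$.

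For $\E[V_\Lambda V'_\Lambda]$ we expand $V_\Lambda V'_\Lambda=\sum_{x_3\in\gamma}\chiL(x_3)z\int_\Lambda v(x_3-x_4)e^{-\beta W(x_4\mid\gamma)}\dx_4\cdot\sum_{x_1\neq x_2}\cdots$. We split into the cases where $x_3$ is distinct from $x_1,x_2$ and where it coincides with one of them; the coincidences give the factor $2$. Applying \req{MGNZ} to the discrete variables, the factor $e^{-\beta W(x_4\mid\gamma\cup\{\cdots\})}$ combines with the existing Boltzmann weight into the full weight of all points. Hence \req{rho-n} yields $\rho^{(4)}$ and $\rho^{(3)}$ exactly, with no remainder, in agreement with the statement.

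For $\E[V_\Lambda V''_\Lambda]$ we apply \req{MGNZ} with $n=2$ to the sum in $V_\Lambda$. The weight $e^{-\beta W(\xx_2\mid\gamma\cup\{x_3,x_4\})}$ inside $V''_\Lambda$ then combines with the other factors into the full four-point weight, which gives $\rho^{(4)}$.

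In $(V'_\Lambda)^2$ the two sums over $\gamma$ are treated with \req{MGNZ}, separating the diagonal (giving the $\rho^{(3)}$ term) from the off-diagonal (giving the $\rho^{(4)}$ term). The integrated variables, however, carry $e^{-\beta W}$ only against the discrete points and $\gamma$, not against each other. The missing factor $e^{-\beta u_{12}}$ is supplied through $1=e^{-\beta u_{12}}-f_{12}$, which produces $\mathcal{R}_1$ and $\mathcal{R}_2$.

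In $V'_\Lambda V''_\Lambda$ the $V''$-variables $x_1,x_2$ do not see the point $x_3$ integrated in $V'$. We insert $1=e^{-\beta W(\xx_2\mid x_3)}-(e^{-\beta W(\xx_2\mid x_3)}-1)$, which gives $\mathcal{R}_3$. For $(V''_\Lambda)^2$ the interaction between the two integrated pairs is handled in the same way, giving $\mathcal{R}_4$.

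The main obstacle is bookkeeping rather than concept. We must track exactly which pair interactions are present after each application of \req{GNZ}, so that the exponents in \req{R1bis4} come out right and the expectation $\E[e^{-\beta W(\xx_n\mid\gamma)}]$ is the correct leftover factor. The argument also uses Fubini, which is justified by Lemma~\ref{Lem2.1}, by $f\in L^1\cap L^\infty$ and $v\in L^1\cap L^2$, and by the boundedness of $\Lambda$. We also use the fact, noted before \req{2ndorderintegral2}, that points coinciding with $\gamma$ contribute zero weight, which resolves the distinction between $\gamma$ and $\gamma\setminus\{x\}$.
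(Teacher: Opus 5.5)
Your proposal is correct and follows the paper's proof. You remove the sums with \req{GNZ}/\req{MGNZ}, apply \req{rho-n} whenever the full Boltzmann weight of all points is present, and otherwise insert the missing interactions via $1=e^{-\beta u_{12}}-f_{12}$ or $1=e^{-\beta W}-(e^{-\beta W}-1)$. That is the paper's identity $e^{\beta u_{12}}=1-e^{\beta u_{12}}f_{12}$ (and its analogues for $W$), rewritten so as to avoid the possibly infinite factor $e^{\beta u}$.

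For $\E\bigl[(V'_\Lambda)^2\bigr]$, your computation, like the paper's proof, gives $v(x_3-x_1)v(x_4-x_2)$ in the $\rho^{(4)}$-term. The $v(x_3-x_2)$ printed there in the statement is therefore a typo.
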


\begin{proof}
According to \req{MGNZ} we have
\begin{align*}
    \E\bigl[V_\Lambda V_\Lambda'\bigr]
    &\,=\,\E\left[ 
            \sum_{x_1 \neq x_2 \in \gamma }\chiL(\xx_2) v(x_1-x_2)
            \sum_{x_4\in\gamma }\chiL(x_4)\,z 
            \int_\Lambda v(x_3-x_4) e^{-\beta W(x_3 \mid \gamma)} \dx_3
            \right] \\[1ex]
    &\,=\,\E\left[
            z^3\int_{\Lambda^2} v(x_1-x_2)
                  \sum_{x_4\in\gamma\cup\{x_1,x_2\}}\chiL(x_4) 
                  \int_\Lambda 
                     v(x_3-x_4) e^{-\beta W(x_3 \mid \gamma\cup\{x_1,x_2\})} 
                  \dx_3\
                  e^{-\beta H(\xx_2)-\beta W(\xx_2 \mid \gamma)}  \dxx_2
            \right] \\[1ex]
    &\,=\,\E\left[
            \sum_{x_4\in\gamma}\chiL(x_4) \,z^3 
            \int_{\Lambda^3}
               v(x_1-x_2)v(x_3-x_4) 
               e^{-\beta H(\xx_3)-\beta W(\xx_3\mid\gamma)}\dxx_3
            \right] \\[1ex]
    &\phantom{\,=} \ +\, 
          2\,\E\left[
               z^3 
               \int_{\Lambda^3}
                  v(x_1-x_2)v(x_1-x_3) 
                  e^{-\beta H(\xx_3)-\beta W(\xx_3\mid\gamma)}\dxx_3
               \right].
\end{align*}
With \req{GNZ} and \req{rho-n} we therefore get
\begin{align*}
    \E\bigl[V_\Lambda V_\Lambda'\bigr]
    &\,=\,\E\left[
            z^4\int_{\Lambda^4} 
                  v(x_1-x_2)v(x_3-x_4)\,
                  e^{-\beta H(\xx_3)-\beta W(\xx_3\mid\gamma\cup\{x_4\})}
                  e^{-\beta W(x_4\mid\gamma)}\dxx_4
            \right] \\[1ex]
    &\phantom{\,=}
            \ +\, 2\int_{\Lambda^3} v(x_1-x_2)v(x_1-x_3)\rho^{(3)}(\xx_3)\dxx_3
          \\[1ex]
    &\,=\,\int_{\Lambda^4}  
             v(x_1-x_2)v(x_3-x_4)\rho^{(4)}(\xx_4)\dxx_4
          \,+\, 2 \int_{\Lambda^3} 
                      v(x_1-x_2)v(x_1-x_3)\rho^{(3)}(\xx_{3})\dxx_{3}\,.
\end{align*}
Likewise we obain from \req{MGNZ} and \req{rho-n} that
\begin{align*}
    \E \bigl[V_\Lambda V_\Lambda''\bigr] 
    &\,=\,\E\Biggl[
            \sum_{x_3 \neq x_4 \in \gamma}\chiL(x_3)\chiL(x_4) v(x_3-x_4)
            \int_{\Lambda^2}
               v(x_1-x_2) z^2 e^{-\beta H(\xx_2)-\beta W(\xx_2\mid\gamma)} 
            \dxx_2
            \Biggr] \\[1ex]
    &\,=\,\E\left[
            z^4 \int_{\Lambda^4} 
                   v(x_3-x_4)v(x_1-x_2)
                   e^{-\beta H(\xx_2)-\beta W(\xx_2\mid\gamma\cup\{x_3,x_4\})}
                   e^{-\beta u(x_3-x_4)-\beta W(x_3,x_4\mid\gamma)}
                \dxx_{4}
            \right]\\[1ex]
    &\,=\, \int_{\Lambda^4} v(x_1-x_2)v(x_3-x_4)\rho^{(4)}(\xx_4)\dxx_{4}\,. 
\end{align*}
Next we compute
\begin{align*}
    \E\bigl[(V_\Lambda')^2\bigr]
    &\,=\,\E\Biggl[
            \sum_{x_3,x_4\in\gamma}\chiL(x_3)\chiL(x_4)
            \,z^2 \int_{\Lambda^2} 
                     v(x_3-x_1)v(x_4-x_2)
                     e^{-\beta W(\xx_2\mid\gamma)} \dxx_2
            \Biggr] \\[1ex]
    &\,=\,\E\Biggl[
            \sum_{x_3\neq x_4\in\gamma}\chiL(x_3)\chiL(x_4)
            \,z^2 \int_{\Lambda^2} 
                     v(x_3-x_1)v(x_4-x_2)
                     e^{-\beta W(\xx_2\mid\gamma)} \dxx_2
            \Biggr] \\[1ex]
    &\phantom{\,=} \
           +\, \E\left[
                 \sum_{x_3\in\gamma}\chiL(x_3)
                 \,z^2 \int_{\Lambda^2} 
                     v(x_3-x_1)v(x_3-x_2)
                     e^{-\beta W(\xx_2\mid\gamma)} \dxx_2
                 \right] ,
\end{align*}
and with \req{MGNZ} and \req{GNZ} we conclude that
\begin{align*}
    \E\bigl[(V_\Lambda')^2\bigr]
    &\,=\,\E\left[
             z^4 \int_{\Lambda^4}
                    v(x_3-x_1)v(x_4-x_2)
                    e^{-\beta W(\xx_2\mid\gamma\cup\{x_3,x_4\})}
                    e^{-\beta H(x_3,x_4)-\beta W(x_3,x_4\mid\gamma)}\dxx_4
             \right] \\[1ex]
     &\phantom{\,=} \
            +\, \E\left[
                  z^3 \int_{\Lambda^3} 
                         v(x_3-x_1)v(x_3-x_2) 
                         e^{-\beta W(\xx_2\mid\gamma\cup\{x_3\})}
                         e^{-\beta W(x_3\mid\gamma)}\dxx_3
                  \right] \\[1ex]
    &\,=\,\E\left[
             z^4 \int_{\Lambda^4}
                    v(x_3-x_1)v(x_4-x_2)
                    e^{-\beta H(\xx_4) - \beta W(\xx_4\mid\gamma)}
                    e^{\beta u(x_1-x_2)}\dxx_4
             \right] \\[1ex]
     &\phantom{\,=} \
            +\, \E\left[
                  z^3 \int_{\Lambda^3} 
                         v(x_3-x_1)v(x_3-x_2) 
                         e^{-\beta H(\xx_3)-\beta W(\xx_3\mid\gamma)}
                         e^{\beta u(x_1-x_2)}\dxx_3
                  \right] .
\end{align*}
Using the Mayer function~\req{f} we have 
$e^{\beta u(x_1-x_2)}=1-e^{\beta u(x_1-x_2)}f(x_1-x_2)$,
and \req{rho-n} therefore gives
\[
   \E\bigl[(V_\Lambda')^2\bigr]
   \,=\, \int_{\Lambda^4} v(x_3-x_1)v(x_4-x_2) \rho^{(4)}(\xx_4)\dxx_4
         \,+\, \int_{\Lambda^3} v(x_3-x_1)v(x_3-x_2)\rho^{(3)}(\xx_3)\dxx_3
         \,-\, \mathcal{R}_1 \,-\, \mathcal{R}_2
\]
with $\mathcal{R}_1$ and $\mathcal{R}_2$ as specified in 
\req{R1bis4}. In a similar fashion we deduce with the help of \req{GNZ} that
\begin{align*}
    \E \bigl[V_\Lambda' V_\Lambda''\bigr] 
    &\,=\,\E\left[
            \sum_{x_4 \in \gamma}\chiL(x_4)
               z^3 \int_{\Lambda^3} v(x_4-x_3)v(x_1-x_2) 
                      e^{-\beta W(x_3\mid\gamma)}
                      e^{-\beta H(\xx_2)-\beta W(\xx_2\mid\gamma)}\dxx_3
            \right] \\[1ex]
    &\,=\,\E\left[
            z^4 \int_{\Lambda^4} 
                   v(x_4-x_3)v(x_1-x_2)
                   e^{-\beta W(x_3\mid\gamma\cup\{x_4\})} 
                   e^{-\beta H(\xx_2)-\beta W(\xx_2\mid\gamma\cup\{x_4\})}
                   e^{-\beta W(x_4\mid\gamma)}\dxx_4
            \right] \\[1ex]
    &\,=\,\E\left[
            z^4 \int_{\Lambda^4} 
                   v(x_4-x_3)v(x_1-x_2)
                   e^{-\beta H(\xx_4)-\beta W(\xx_4\mid\gamma)}
                   e^{\beta u(x_1-x_3)+\beta u(x_2-x_3)}\dxx_4
            \right].
\end{align*}
Therefore, writing 
\[
   e^{\beta u(x_1-x_3)+\beta u(x_2-x_3)}
   \,=\, 1 \,-\, e^{\beta u(x_1-x_3)}e^{\beta u(x_2-x_3)}
                 \bigl(e^{-\beta W(\xx_2\mid x_3)}-1\bigr)
\]
and using \req{rho-n} we conclude that
\[
    \E \bigl[V_\Lambda' V_\Lambda''\bigr] 
    \,=\, \int_{\Lambda^4} v(x_4-x_3)v(x_1-x_2) \rho^{(4)}(\xx_4)\dxx_4
          \,-\, \mathcal{R}_3
\]
with $\mathcal{R}_3$ of \req{R2}.

Lastly, we calculate
\begin{align*}
    \E \bigl[(V_\Lambda'')^2\bigr]
    &\,=\,\E\left[
            z^4\int_{\Lambda^4}
                  v(x_1-x_2)v(x_3-x_4) 
                  e^{-\beta H(\xx_2)-\beta W(\xx_2\mid\gamma)} 
                  e^{-\beta H(x_3,x_4)-\beta W(x_3,x_4\mid\gamma)} 
                  \dxx_4
            \right]\\[1ex]
    &\,=\,\E\left[
            z^4\int_{\Lambda^4}
                  v(x_1-x_2)v(x_3-x_4) 
                  e^{-\beta H(\xx_4)-\beta W(\xx_4\mid\gamma)} 
                  e^{\beta W(\xx_2\mid x_3,x_4)}
                  \dxx_4
            \right],
\end{align*}
and writing
\[
   e^{\beta W(\xx_2\mid x_3,x_4)} 
   \,=\, 1 - 
         e^{\beta u(x_1-x_3)+\beta u(x_1-x_4)+\beta u(x_2-x_3)+\beta u(x_2-x_4)}
         \bigl(e^{-\beta W(\xx_2\mid x_3,x_4)}-1\bigr)
\]
we finally arrive at
\[
    \E \bigl[(V_\Lambda'')^2\bigr]
    \,=\, \int_{\Lambda^4} v(x_1-x_2)v(x_3-x_4) \rho^{(4)}(\xx_4)\dxx_4
          \,-\, \mathcal{R}_4
\]
with $\mathcal{R}_4$ of \req{R3}.
\end{proof}

\begin{proofof}{Lemma~\ref{Lem:Vtilde}}
Let $V_\Lambda^\eps$ be defined as in \req{Veps}.
Using the symmetry of $v$ and $u$, and the symmetry and translation invariance
of the correlation functions, it follows from 
Lemma~\ref{Lem:mixedexpectations1} and \req{variance2ndord} that
\begin{align*}
   \E\bigl[(V_\Lambda^\eps)^2\bigr]
   &\,=\,  \int_{\Lambda^4}v(x_1-x_2)v(x_3-x_4)\rho^{(4)}(\xx_4)\dxx_4
    \\[1ex]
   &\phantom{\,=} \
           \,+\, \bigl(4(1-\eps)^2 + 8\eps(1-\eps) + 4\eps^2\bigr)
                 \int_{\Lambda^3} v(x_1-x_2)v(x_1-x_3)\rho^{(3)}(\xx_3)\dxx_3
                 \\[1ex]
   &\phantom{\,=} \
           \,+\, 2(1-\eps)^2
                 \int_{\Lambda^2} v(x_1-x_2)^2\rho_2(x_1-x_2)\dxx_2
           \,-\, 4\eps^2 \mathcal{R}_1 \,-\, 4\eps^2 \mathcal{R}_2
           \,+\, 4\eps^2 \mathcal{R}_3 \,-\, \eps^2 \mathcal{R}_4\\[1ex]
   &\,=\, \E\bigl[V_\Lambda^2\bigr] 
          \,-\, 4\eps\int_{\Lambda^2} v(x_1-x_2)^2\rho_2(x_1-x_2)\dxx_2
          \,+\, \eps^2 \mathcal{R}_v
\end{align*}
with
\be{Rv}
   \mathcal{R}_v
   \,=\, 2\int_{\Lambda^2} v(x_1-x_2)^2\rho_2(x_1-x_2)\dxx_2
         \,-\, 4 \mathcal{R}_1 \,-\, 4 \mathcal{R}_2
         \,+\, 4 \mathcal{R}_3 \,-\, \mathcal{R}_4\,.
\ee
This yields \req{VarVeps}, and we are left with estimating
the individual terms in \req{Rv} to verify \req{Rvbound}.

First, it is an obvious consequence of the Cauchy-Schwarz inequality that 
\be{I2bound}
   \int_{\Lambda^2} v(x_1-x_2)^2\rho_2(x_1-x_2)\dxx_2
   \,\leq\, \norm{\rho_2}_{L^\infty} \norm{v}_{L^2}^2 \,|\Lambda|\,.
\ee
Second, as far as $\mathcal{R}_1$ is concerned, it follows from \req{R0}, 
Lemma~\ref{Lem2.1}, and the stability bound~\req{B} that 
\[
  |\mathcal{R}_1| 
  \,\leq\, C_0'\int_{\Lambda^3}
                  \bigl|v(x_3-x_1)\bigr|\bigl|v(x_3-x_2)\bigr|
                  \bigl|f(x_1-x_2)\bigr|\dxx_3
\]
for some suitable constant $C_0'$.
Moreover, since $f\in L^1(\R^d)$ this upper bound can further be estimated by
\be{R0bound}
\begin{aligned}
  |\mathcal{R}_1| 
  &\,\leq\, C_0'\int_{\Lambda^2}
                   \bigl|f(x_1-x_2)\bigr| 
                   \int_\Lambda 
                      \bigl|v(x_3-x_1)\bigr|\bigl|v(x_3-x_2)\bigr|\dx_3
                \dxx_2 \\[1ex]
  &\,\leq\, C_0'\,\norm{v}_{L^2}^2 \int_{\Lambda^2}\bigl|f(x_1-x_2)\bigr|\dxx_2
   \,\leq\, C_0'\,\norm{v}_{L^2}^2 \norm{f}_{L^1}\,|\Lambda|\,.
\end{aligned}
\ee
Finally, after renumbering the integration variables appropriately,
and using the symmetry of $u$ and $v$, we conclude from \req{R1bis4} that
\be{varphi-integrals}
   4\mathcal{R}_3 - 4\mathcal{R}_2 - \mathcal{R}_4
   \,=\, z^4\int_{\Lambda^4}
               v(x_4-x_3)v(x_2-x_1)\,e^{-\beta u_{34}-\beta u_{12}}
               \varphi(\xx_4)\,
               \E\bigl[e^{-\beta W(\xx_4\mid\gamma)}\bigr]\dxx_4
\ee
with
\[
  \varphi(\xx_4) 
  \,=\, 4e^{-\beta u_{14}-\beta u_{24}}
        \bigl(e^{-\beta u_{13} - \beta u_{23}}-1\bigr)
        \,-\, 4f_{24}e^{-\beta u_{14}-\beta u_{23}-\beta u_{13}}
        \,+\, 1 \,-\, 
              e^{-\beta u_{13} - \beta u_{14} - \beta u_{23} - \beta u_{24}}\,.
\]
In terms of the Mayer $f$-function $\varphi$ can be rewritten as
\begin{align*}
   \varphi(\xx_4) 
   &\,=\, 4(1+f_{14})(1+f_{24})(f_{13}+f_{23}+f_{13}f_{23})
          \,-\, 4f_{24}(1+f_{14})(1+f_{23})(1+f_{13})\\[1ex]
   &\phantom{\,=}
          \ +\, 1 
          \,-\, (1+f_{13})(1+f_{14})(1+f_{23})(1+f_{24})
          \\[1ex]
   &\,=\, 3f_{13}+3f_{23}-5f_{24}-f_{14}
          + 3f_{13}f_{23} + 3f_{13}f_{14} + 3f_{14}f_{23} - 5f_{14}f_{24}
          - f_{13}f_{24} - f_{23}f_{24}\\[1ex]
   &\phantom{\,=}
          \ + 3f_{14}f_{13}f_{23} - f_{13}f_{14}f_{24}
          - f_{13}f_{23}f_{24} - f_{14}f_{23}f_{24} 
          - f_{13}f_{14}f_{23}f_{24}\,.
\end{align*}
Looking at the individual terms of $\varphi$ several integrals 
of \req{varphi-integrals} can be seen to cancel each other 
by interchaning the variables $x_1$ and $x_2$, or $x_3$ and $x_4$,
or the variable pairs $(x_1,x_2)$ and $(x_3,x_4)$, respectively.
Eventually, this yields the simplified expression
\[
   4\mathcal{R}_3 - 4\mathcal{R}_2 - \mathcal{R}_4
   \,=\, z^4\int_{\Lambda^4}
               v(x_4-x_3)v(x_2-x_1)\, e^{-\beta u_{34}-\beta u_{12}}
               f_{14}f_{23}(2 - f_{13}f_{24})\,
               \E\bigl[e^{-\beta W(\xx_4\mid\gamma)}\bigr]\dxx_4\,,
\]
and therefore we conclude from Lemma~\ref{Lem2.1} and 
the stability bound~\req{B} of the pair potential that
\[
   \bigl|4\mathcal{R}_3 - 4\mathcal{R}_2 - \mathcal{R}_4\bigr|
   \,\leq\, C_1' \int_{\Lambda^4}
                    \bigl|v(x_4-x_3)\bigr|\bigl|v(x_2-x_1)\bigr|
                    \bigl|f(x_1-x_4)\bigr|\bigl|f(x_2-x_3)\bigr| \dxx_4
\]
for some suitable constant $C_1'$, which is independent of $v$. 
The right-hand side can be further estimated in terms of the convolution
$|f|*|v|$ of $|f|$ and $|v|$, namely
\begin{align*}
   \bigl|4\mathcal{R}_3 - 4\mathcal{R}_2 - \mathcal{R}_4\bigr|
   &\,\leq\, C_1'\int_\Lambda
                 \int_{\R^d} \left(
                 \int_{\R^d} |f(x_2-x_3)||v(x_3-x_4)|\dx_3
                 \int_{\R^d} |v(x_2-x_1)||f(x_1-x_4)|\dx_1 \right)
                 \dx_2\dx_4 \\[1ex]
   &\,=\, C_1'\int_\Lambda \int_{\R^d} 
                 \bigl(|f|*|v|\bigr)(x_2-x_4)\,\bigl(|f|*|v|\bigr)(x_2-x_4)
              \dx_2\dx_4\\[1ex]
   &\,=\, C_1'\,\bigl\|\,|f|*|v|\,\bigr\|_{L^2}^2\,|\Lambda|
    \,\leq\, C_1'\,\norm{f}_{L^1}^2\norm{v}_{L^2}^2\,|\Lambda|\,,
\end{align*}
and inserting this estimate together with \req{I2bound} and \req{R0bound}
in \req{Rv}, we finally
deduce that
\[
   \frac{{\cal R}_v}{|\Lambda|}
   \,\leq\, \bigl(2\norm{\rho_2}_{L^\infty} \,+\, 4C_0'\,\norm{f}_{L^1}
                  \,+\, C_1'\,\norm{f}_{L^1}^2\bigr)\,\norm{v}_{L^2}^2\,,
\]
which was to be shown.
\end{proofof}

Finally, we turn to the covariance of $V_\Lambda$ and $N_\Lambda$.

\begin{proofof}{Lemma~\ref{Lem:VtildeN}}
From \req{N} and \req{2ndorderintegral2}, \req{MGNZ} and \req{GNZ},
we conclude that
\begin{align*}
    \E\bigl[V_\Lambda'N_\Lambda\bigr]
    &\,=\,\E\left[
            \sum_{x_1\in\gamma}\chiL(x_1)\,
            z \int_\Lambda v(x_1-x_2) e^{-\beta W(x_2 \mid \gamma)} \dx_2 
            \sum_{x_3\in\gamma} \chiL(x_3)
            \right]     \\[1ex]
    &\,=\,\E\Biggl[
            \sum_{x_1\neq x_3\in\gamma}\chiL(x_1)\chiL(x_3)\,
            z \int_\Lambda v(x_1-x_2) e^{-\beta W(x_2 \mid \gamma)} \dx_2 
            \Biggr]  \\[1ex]
    &\phantom{\,=}
            \
          \,+\,\E\Biggl[
                 \sum_{x_1\in\gamma}\chiL(x_1)\,
                 z \int_\Lambda v(x_1-x_2) e^{-\beta W(x_2\mid\gamma)} \dx_2 
               \Biggr] \\[2ex]
    &\,=\,\E\left[
            z^3 \int_{\Lambda^3} 
                   v(x_1-x_2) e^{-\beta W(x_2\mid\gamma\cup\{x_1,x_3\})}\
                   e^{-\beta H(x_1,x_3) - \beta W(x_1,x_3\mid\gamma)}\dxx_3 
            \right] \\[1ex]
    &\phantom{\,=}
            \ +\,\E\left[
                   z^2 \int_{\Lambda^2} 
                          v(x_1-x_2) e^{-\beta W(x_2\mid\gamma\cup\{x_1\})}
                          e^{-\beta W(x_1\mid\gamma)}\dxx_2 
               \right] \,.
\end{align*}
It therefore follows from \req{rho-n} that
\be{V1N}
    \E\bigl[V_\Lambda'N_\Lambda\bigr]
    \,=\,\int_{\Lambda^3} v(x_1-x_2) \rho^{(3)}(\xx_3)\dxx_3
         \,+\, \int_{\Lambda^2} v(x_1-x_2)\rho_2(x_1-x_2)\dxx_2\,.
\ee

On the other hand
\begin{align*}
    \E\bigl[V_\Lambda'' N_\Lambda \bigr]
    &\,=\,\E\left[
            \sum_{x_3\in\gamma} \chiL(x_3)
            \int_{\Lambda^2} 
               v(x_1-x_2)\,z^2 
               e^{-\beta H(\xx_2)-\beta W(\xx_2\mid\gamma)}\dxx_2
           \right] \\[1ex]
    &\,=\,\E\left[
            z^3 \int_{\Lambda^3} 
                   v(x_1-x_2)\,
                   e^{-\beta H(\xx_2)-\beta W(\xx_2\mid\gamma\cup\{x_3\})}
                   e^{-\beta W(x_3\mid\gamma)}\dxx_3
            \right]
\end{align*}
by virtue of \req{GNZ}, and hence, \req{rho-n} yields
\be{V2N}
    \E\bigl[V_\Lambda'' N_\Lambda \bigr]
    \,=\, \int_{\Lambda^3}v(x_1-x_2)\rho^{(3)}(\xx_3)\dxx_3\,.
\ee
Accordingly, it follows from \req{Veps}, \req{V1N}, \req{V2N}, and \req{VN} 
that
\[
   \E\bigl[V_\Lambda^\eps N_\Lambda\bigr]
   \,=\, \int_{\Lambda^3} v(x_1-x_2)\rho^{(3)}(\xx_3)\dxx_3
         \,+\, 2 \int_{\Lambda^2} v(x_1-x_2)\rho_2(x_1-x_2)\dxx_2
   \,=\, \E\bigl[V_\Lambda N_\Lambda \bigr]\,.
\]
This establishes our claim \req{CovVepsN}, because $E\bigl[V_\Lambda\bigr]=E\bigl[V_\Lambda^\eps\bigr]$ by the construction
of $V_\Lambda^\eps$.
\end{proofof}


\newpage

\end{document}